\title[MLMC-FDM for random fractional conservation laws]
{Multi-level Monte Carlo Finite Difference Methods for Fractional Conservation Laws with Random Data}
\date{}
\author[U. Koley]{Ujjwal Koley}
\address[Ujjwal Koley]{\newline
 Tata Institute of Fundamental Research, 
 Centre for Applicable Mathematics,
 Post Bag No. 6503, GKVK Post Office,
 Sharada Nagar, Chikkabommasandra,
 Bangalore 560065, India}
\email[]{ujjwal@math.tifrbng.res.in}
\author[D. Ray]{Deep Ray}
\address[Deep Ray]{\newline
Department of Computational and Applied Mathematics,
Rice University, Houston, TX-77005,
USA}
\email[]{deep.ray@rice.edu}
\author[T. Sarkar]{Tanmay Sarkar}
\address[Tanmay Sarkar]{\newline
 Department of Mathematics, 
 Indian Institute of Technology Jammu,
 Jagti, NH-44 Bypass Road, Post Office Nagrota,
 Jammu - 181221, India}
\email[]{tanmay.sarkar@iitjammu.ac.in}
\subjclass[2000]{65N30, 65M12, 65M06, 35L65}
\keywords{Degenerate convection-diffusion equation, fractal conservation laws, random entropy solutions, multi-level Monte Carlo methods, work estimates.
}
\thanks{}
\begin{document}
\begin{abstract}
We establish a notion of random entropy solution for degenerate fractional conservation laws incorporating randomness in the initial data, convective flux and diffusive flux. In order to quantify the solution uncertainty, we design a multi-level Monte Carlo Finite Difference Method (MLMC-FDM) to approximate the ensemble average of the random entropy solutions. Furthermore, we analyze the convergence rates for MLMC-FDM and compare it with the convergence rates for the deterministic case. Additionally, we formulate error vs. work estimates for the multi-level estimator. Finally, we present several numerical experiments to demonstrate the efficiency of these schemes and validate the theoretical estimates obtained in this work.
\end{abstract}

\maketitle


\section{Introduction}
The last decade has witnessed remarkable advances in the area of degenerate non-linear non-local integral partial differential equations. In this paper, we consider the following Cauchy problem in multiple space dimensions
\begin{align}
\label{fdcd}
\begin{cases}
\partial_t u(t,x) + \nabla \cdot f(u(t,x)) = - (- \Delta)^{\ld/2}[A(u(t,\cdot))](x), &\quad (t,x)\in Q_T :=(0,T)\times\R^d,\\
 u(0,x) = u_0(x),  \qquad \qquad & \quad x\in \R^d,
\end{cases}\end{align}
where $T>0$ is fixed, $u: Q_T\rightarrow \R$ is the unknown function, $u_0$ is the initial condition, $f= (f_1, f_2,...,f_d):\R\rightarrow\R^d$ is the flux function and $A:\R\rightarrow\R$ is the nonlinear diffusion term. We assume that $f\in W^{1,\infty}(\R,\R^d)$ and $A\in W^{1,\infty}(\R)$. Furthermore, we assume that $A(\cdot)$ is non-decreasing with $A(0)=0$, thus allowing \eqref{fdcd} to be degenerate. Additionally, we make the following assumption on the initial condition
\begin{align}
\label{initial_condn}
u_0\in L^{\infty}(\R^d)\cap L^1(\R^d)\cap BV(\R^d).
\end{align}
The non-local operator $-(-\bt)^{\ld/2}$ is the fractional Laplacian, defined for all $\vp(t,\cdot)\in C_c^{\infty}(\R^d)$ by
\begin{align}\label{dif_integralform}
-(-\bt)^{\ld/2}[\vp(t,\cdot)](x) = c_{\ld}\text{P.V.}\int_{|z|>0} \frac{\vp(t,x+z)-\vp(t,x)}{|z|^{d+\ld}}~ dz,
\end{align} 
for some constant $c_{\ld}>0,~\ld\in (0,2)$. More precisely, the constant $c_{\ld}$ is given by \cite{alibaud2014optimal, droniou2006fractal, droniou2010numerical}
\begin{align}
\label{c-lambda}
c_{\ld} = \frac{2^{\ld-1}\ld \Gamma(\frac{d+\ld}{2})}{\pi^{\frac{d}{2}}\Gamma(1-\frac{\ld}{2})},
\end{align}
where $\Gamma$ is the Gamma function defined as follows
\begin{align*}
\Gamma(\xi) := \int_0^{\infty} e^{-p}p^{\xi-1}~dp, \quad \xi>0.
\end{align*}
A large number of phenomena in physics and finance are modeled by equations of type \eqref{fdcd}, see \cite{applebaum}. In particular, non-local operator appears in mathematical models for viscoelastic materials \cite{Grandmont}, fluid flows and acoustic propagation in porous media \cite{Pablo} and pricing derivative securities in financial markets \cite{black}. Observe that model described by \eqref{fdcd} encompasses scalar conservation laws ($A=0$), linear fractional conservation laws ($A(u)=u$), and fractional porous medium equations ($f=0, A(u)=u |u|^{m-1}, m\ge 1$). In fact, \eqref{fdcd} is an extension to the fractional diffusion setting of the degenerate convection diffusion equations 
\begin{align}
\label{degenerate}
\partial_t u(t,x) + \nabla \cdot f(u(t,x)) = \Delta A(u(t,x)).
\end{align}

The non-linearity of the flux function and possible degeneracy of the diffusion term in \eqref{fdcd}, can lead to loss of regularity in the solution, even with smooth initial conditions. Thus, \emph{weak solutions} to \eqref{fdcd} must be sought. The notion of weak solution is defined as follows:
\begin{defi}(Weak solution)
A function $u(t,x)\in L^{\infty}(Q_T) \cap C([0,T]; L^1(\R^d))$ is called a weak solution of the non-local initial value problem \eqref{fdcd} if 
\begin{Definitions}
\item $u(t,x)$ satisfies the following integral formulation: 
\begin{equation}
\iint_{Q_T} \Big(u(t,x)\partial_t\vp(t,x) + f(u(t,x))\cdot \nabla\vp(t,x) - A(u(t,x))\,(-\bt)^{\ld/2}[\vp(t,\cdot)](x)\Big)~dxdt = 0,
\end{equation}
for all test functions $\vp\in C_c^{\infty}((0,T)\times\R^d)$.
\item For almost every $x \in \R^d$, $u(0,x)= u_0(x)$. 
\end{Definitions}
\end{defi}
It is well-known that weak solutions to \eqref{fdcd} need not be unique \cite{burger2000}. Consequently, an \emph{entropy admissibility condition} must be imposed to single out the physically relevant solution. To describe the entropy framework for \eqref{fdcd}, we introduce Kru\v{z}kov's convex entropy-entropy flux pair \cite{Kruzkov}: 
\begin{align*}
\eta_k(u)= |u-k|, \qquad q_k(u) = \eta'_k(u)(f(u)-f(k)),
\end{align*}
where $\eta'_k(u)= \sg(u-k)$, and $k\in\R$ is a constant. 
For technical reasons (cf. \cite{alibaud2007entropy, cifani2011entropy}), we decompose the non-local operator $g:=-(-\bt)^{\ld/2}$ into two parts. For each $r>0$, we write $g[\vp]=g_r[\vp]+g^r[\vp]$, where
\begin{align*}
g_r[\vp(t,\cdot)](x) & = c_{\ld} \,\text{P.V.} \int_{|z|\le r}\frac{\vp(t,x+z)-\vp(t,x)}{|z|^{d+\ld}}~ dz,\\
g^r[\vp(t,\cdot)](x) & = c_{\ld} \int_{|z|>r}\frac{\vp(t,x+z)-\vp(t,x)}{|z|^{d+\ld}}~ dz,
\end{align*} 
where $c_{\ld}$ is given by \eqref{c-lambda}. We are now ready to define the notion of an \emph{entropy solution} for \eqref{fdcd}.
\begin{defi}(Entropy solution)
A function $u(t,x)$ is said to be an entropy solution of the initial value problem \eqref{fdcd} provided 
\begin{Definitions}
\item $u\in L^{\infty}(Q_T)\cap C([0,T]; L^1(\R^d))$.
\item For all $k\in\R$, all $r>0$, and all test functions $0\leq \vp\in C_c^{\infty}([0,T)\times\R^d)$,
\begin{align*}
\iint_{Q_T} \left(\eta_k(u)\partial_t\vp + q_k(u)\cdot\nabla\vp + \eta_{A(k)}(A(u))g_r[\vp] + \eta'_k(u) g^r[A(u)]\vp)\right)~dxdt
& + \int_{\R^d}\eta_k(u_0(x))\vp(0,x)~dx \geq 0.
\end{align*}
\end{Definitions}
\end{defi}
For scalar conservation laws, the entropy framework was introduced by Kru\v{z}kov \cite{Kruzkov} and Vol'pert \cite{Volpert}, while entropy solutions for the degenerate parabolic equations \eqref{degenerate} were first considered by Vol'pert and Hudjaev \cite{VolpertHudajev}. Uniqueness of entropy solutions to \eqref{degenerate} was first proved by Carrillo \cite{Carrillo}. Numerical approximation of entropy solutions for both hyperbolic and degenerate hyperbolic equations are quite well developed in literature. We mention a few references, which by no means is exhaustive. Finite difference schemes, for hyperbolic problem, have been studied by Ole\u{i}nik \cite{Oleinik}, Harten \textit{et al}.~\cite{Hartenetal}, and several others. Finite difference schemes for degenerate equations were analysed by Evje and Karlsen \cite{EvjeKarlsen1}, and Karlsen et al. \cite{ujjwal}. Several efficient numerical schemes have been proposed and analyzed to solve the non-local model \eqref{fdcd}. Finite difference/volume schemes have been developed in \cite{cifani2011entropy, droniou2010numerical} (see references therein), with error estimates for such schemes obtained in \cite{cifani2014numerical}. Discontinuous Galerkin methods for \eqref{fdcd} have been proposed and analyzed by Cifani et al. \cite{cifani2010discontinuous, cifani2011discontinuous}, and also by Xu and Hesthaven \cite{xu2014discontinuous}.

The \emph{classical} paradigm for designing efficient numerical schemes assumes that data for \eqref{fdcd}, i.e., initial data $u_0$, convective flux and diffusive flux, are known exactly. 
In many situations of practical interest, however, deterministic data is unavailable due to inherent uncertainty in modeling and
measurements of physical parameters, such as the coefficients of specific heat in the equation of state for compressible gases, or the relative permeabilities in models of multi-phase flow in porous media.  Often, the initial data is known only up to certain statistical quantities of interest like the mean, variance, higher moments, and in some
cases, the law of the stochastic initial data. Thus, a mathematical formulation is necessary for \eqref{fdcd} which allows randomness in the initial data, as well as in the convective and diffusive fluxes. The problem of random initial data was considered in \cite{mishra2012sparse}, where the existence and uniqueness of a
random entropy solution was shown along with a convergence analysis for Multi-Level Monte-Carlo Finite Volume (MLMCFV) discretizations. In \cite{mishra2016numerical} a mathematical framework was developed for scalar conservation laws with random flux functions. The Multi-Level Monte-Carlo (MLMC) discretization of random degenerate parabolic equation $(\lambda=2)$ was investigated in \cite{koley2013multilevel}.
However, as per our knowledge, MLMC discretization of \eqref{fdcd} for the values of $\lambda\in (0,2)$ has not been addressed in the literature. In addition, due to the fact that the equation \eqref{fdcd} changes its nature (from hyperbolic to parabolic) for the ascending values of $\lambda\in (0,2)$, the convergence rates of certain numerical schemes heavily depend on the values of $\lambda$ in the deterministic setup. As a consequence, the convergence analysis in MLMC discretization for the random case differs from the existing literature for values of $\lambda\in (0,2)$.

More precisely, the main contributions of this paper are listed below:
\begin{enumerate}
\item We develop an appropriate mathematical framework of random entropy solution for the non-local equation \eqref{fdcd}. By generalizing the classical well-posedness results for the fractional degenerate convection-diffusion equation \eqref{fdcd} to the case of random initial data and random convective and diffusive fluxes, we define random entropy solutions and develop well-posedness results. We remark that our solution concept is different from the \emph{stochastic entropy solution} for randomly forced fractional conservation laws with multiplicative noise. Several results are available in that direction. For well-posedness theory of stochastic conservation laws, we refer to \cite{BaVaWit_2012, BisKoleyMaj,KoleyMajval2}. For the degenerate stochastic conservation laws, interested reader can consult \cite{KoleyMajval1} (see also references therein), and finally for stochastic degenerate fractional conservation laws, consult \cite{Koleyval1,Koleyval2}.
\item We design and analyze robust algorithms for computing the random entropy solutions. We begin with by describing the explicit and explicit-implicit schemes in deterministic case. Under $\lambda$-dependent CFL condition, we analyze the convergence rates and work estimates. We generalize the error analysis to the case of random input data.

\item To compute random entropy solutions, we rely on a method based on Monte Carlo (MC) sampling. In a MC method, the probability space is sampled and the non-local PDE is solved for each sample. However, the main drawback of MC method is that it converges as $1/\sqrt{M}$ where $M$ is the number of MC samples. Since this asymptotic rate can not be improved due to the central limit theorem, we require a large number of samples  in order to obtain low statistical errors. To overcome this drawback of MC methods, we propose a multi-level Monte Carlo (MLMC) method based on the explicit/explicit-implicit schemes for the deterministic non-local equations. We demonstrate that the resulting schemes converge. In addition, we determine the optimal number of MC samples needed at each mesh level to minimize the overall computational work.
\end{enumerate}

The rest of the paper is organized as follows: in Section~\ref{review}, we recapitulate the existence and stability results for the deterministic degenerate fractional conservation laws \eqref{fdcd}, following which we generalize the results for a random input data. We present finite difference schemes for the non-local equation in Section~\ref{numerical}, along with their associated work estimates. We describe the MC method in Section~\ref{mlmc-mc}, and analyze the convergence rates which are inferior to the deterministic convergence rates. We also describe MLMC method and obtain the corresponding convergence rate estimates. In addition, we determine the optimal number of samples for a fixed error tolerance. Finally, Section~\ref{results} is devoted to numerical experiments which confirm the theoretical estimates.


\section{Random degenerate fractional conservation laws}
\label{review}
Our aim is to develop a framework for random entropy solutions of degenerate fractional convection-diffusion equations, with a particular class of random initial data and random flux functions. We begin by first stating the known results with deterministic data.
\subsection{Entropy solution}
Under the assumptions on $f$ and $A$ described in Section 1, the Cauchy problem \eqref{fdcd} with deterministic data admits a unique entropy solution $u(t,\cdot)\in L^1(\R^d)\cap L^{\infty}(\R^d)$ for every $t>0$ corresponding to each $u_0\in L^1(\R^d)\cap L^{\infty}(\R^d)\cap BV(\R^d)$ (for details, consult \cite{cifani2011entropy}). Let us define the data-to-solution operator as
\begin{align}\label{datatosol}
S: (u_0,f,A)\mapsto u(t,\cdot)= S(t)(u_0,f,A), \quad t>0.
\end{align}
We also introduce the following notation to describe the well-posedness results:
\begin{align}\label{defn_E}
E(u_0) = |u_0|_{BV(\R^d)}\Big[1+ \Big(\ln \frac{\|u_0\|_{L^1(\R^d)}}{|u_0|_{BV(\R^d)}}\Big) \Big]\chi_{\frac{\|u_0\|_{L^1(\R^d)}}{|u_0|_{BV(\R^d)}}>1},
\end{align}
where $\chi$ denotes the characteristics function, with the convention that $E(u_0)=0$ whenever $|u_0|_{BV(\R^d)}=0$. Furthermore, we denote $\displaystyle\esssup_{I(u_0)}|f'|$ by $\|f'\|_{L^{\infty}(\R^d)}$, where $I(u_0) = (\essinf u_0,\esssup u_0)$.
The following theorem summarizes some of the fundamental results from  \cite{alibaud2012continuous} regarding the entropy solution $u$ of \eqref{fdcd}.
\begin{thm}
\label{determ_exis}
Let $f$ and $A$ be locally Lipschitz continuous functions. Then
\begin{itemize}
\item [(i)]for every $u_0\in L^1(\R^d)\cap L^{\infty}(\R^d)\cap BV(\R^d)$, the initial value problem \eqref{fdcd} admits a unique BV entropy weak solution $u\in L^{\infty}(Q_T)\cap C([0,T];L^1(\R^d))\cap L^{\infty}([0,T];BV(\R^d))$.
\item [(ii)] For every $t>0$, the data-to-solution map $S(t)$ given by \eqref{datatosol}
satisfies the following estimates
\begin{align}
\|S(t)(u_0,f,A)\|_{L^{\infty}(\R^d)} & \leq \|u_0\|_{L^{\infty}(\R^d)},\label{det_estim_0}\\
\|S(t)(u_0,f,A)\|_{L^1(\R^d)} & \leq \|u_0\|_{L^1(\R^d)}, \label{det_estim_1}\\
\|S(t)(u_0,f,A)\|_{BV(\R^d)} & \leq |u_0|_{BV(\R^d)} \label{det_estim_2}.
\end{align}
\item [(iii)] Modulus of continuity in time: for all $t_1,t_2\geq 0$,
\begin{align}
\|S(t_1)(u_0,f,A) - S(t_2)(u_0,f,A)\|_{L^{1}(\R^d)} \leq |u_0|_{BV(\R^d)} \|f'\|_{L^{\infty}(\R^d)} |t_1 - t_2| + C\E^{t_1 - t_2}_{\ld,u_0,A}
\end{align}
with $C= C(d,\ld)$, and $\E^{t_1 - t_2}_{\ld,u_0,A}$ is given by
\[
\E^{t_1 - t_2}_{\ld,u_0,A} = 
\begin{cases}
|u_0|_{BV(\R^d)}\|(A')^{1/\ld}\|_{L^{\infty}(\R)} |t_1^{1/\ld} - t_2^{1/\ld}|, & \mathrm{if} \,\, \ld\in (1,2),\\[1.5mm]
E(u_0)\|A'\|_{L^{\infty}(\R)}|t_1 - t_2|
+|u_0|_{BV(\R^d)}\|A'\|_{L^{\infty}(\R)}(1+\|\ln A'\|_{L^{\infty}(\R)})|t_1 - t_2|\\
\qquad \qquad \qquad \qquad +|u_0|_{BV(\R^d)}\|A'\|_{L^{\infty}(\R)} |t_1\ln t_1 - t_2\ln t_2|, & \mathrm{ if  }\,\, \ld=1,\\[1.5mm]
\|u_0\|_{L^1(\R^d)}^{1-\ld}|u_0|_{BV(\R^d)}^{\ld}\|A'\|_{L^{\infty}(\R)}|t_1 - t_2|, & \mathrm{ if  }\,\, \ld\in (0,1),
\end{cases}
\]
where $E(u_0)$ is defined in \eqref{defn_E}.
\end{itemize}
\end{thm}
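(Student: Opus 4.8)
The plan is to establish (i) by the usual approximation--compactness--uniqueness route, and to read off (ii) and (iii) as \emph{a priori} estimates that survive the limit. For (i) I would regularize \eqref{fdcd}, either by adding a vanishing viscosity $\varepsilon\bt u$ or by truncating the singular kernel in \eqref{dif_integralform} near $z=0$, obtaining smooth solutions $u^\varepsilon$ by classical parabolic theory. The uniform $L^\infty$, $L^1$ and $BV$ bounds of (ii) (proved first at the regularized level) together with the time-equicontinuity of (iii) furnish, via Kolmogorov--Riesz / Helly compactness, a subsequence converging in $C([0,T];L^1_{loc}(\R^d))$ to a limit $u$; passing to the limit in the approximate entropy inequalities --- the nonlocal term being handled through the near/far decomposition $g=g_r+g^r$ that isolates the singular part as in \cite{alibaud2007entropy,cifani2011entropy} --- shows that $u$ is a $BV$ entropy solution. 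Uniqueness, and the $L^1$ contraction on which (ii) rests, I would obtain from a Kru\v{z}kov doubling-of-variables argument \cite{Kruzkov} adapted to the nonlocal operator, in which the symmetry of the kernel $|z|^{-d-\ld}$ and the monotonicity of $A$ force the diffusion contribution to carry a favorable sign.

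For (ii), the bound \eqref{det_estim_0} is the maximum principle: $A$ is nondecreasing and $-(-\bt)^{\ld/2}$ satisfies a positive maximum principle (read off from its integral representation), so the constants $\pm\|u_0\|_{L^\infty(\R^d)}$ act as barriers. For \eqref{det_estim_1} I note that $u\equiv0$ solves \eqref{fdcd} (since $A(0)=0$ and $\nabla\cdot f(0)=0$), so the $L^1$ contraction applied to the pair $(u,0)$ gives $\|u(t)\|_{L^1(\R^d)}\le\|u_0\|_{L^1(\R^d)}$. The bound \eqref{det_estim_2} follows from translation invariance: $u(t,\cdot+h)$ solves \eqref{fdcd} with data $u_0(\cdot+h)$, whence $\|u(t,\cdot+h)-u(t,\cdot)\|_{L^1(\R^d)}\le\|u_0(\cdot+h)-u_0(\cdot)\|_{L^1(\R^d)}$ by contraction; dividing by $|h|$ and taking the supremum yields $|u(t)|_{BV(\R^d)}\le|u_0|_{BV(\R^d)}$.

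For (iii), I would work with the distributional identity
\[
u(t_1,\cdot)-u(t_2,\cdot)=-\int_{t_2}^{t_1}\Big(\nabla\cdot f(u(s,\cdot))+(-\bt)^{\ld/2}[A(u(s,\cdot))]\Big)\,ds .
\]
The convective part is bounded in $L^1$ by $\int_{t_2}^{t_1}\|f'\|_{L^\infty(\R^d)}|u(s)|_{BV(\R^d)}\,ds\le\|f'\|_{L^\infty(\R^d)}|u_0|_{BV(\R^d)}|t_1-t_2|$ via \eqref{det_estim_2}, giving the first term of the stated bound. The diffusive part is $\E^{t_1-t_2}_{\ld,u_0,A}$, and here the behaviour splits by the value of $\ld$. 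When $\ld\in(0,1)$ the operator is subcritical: splitting the kernel at a radius $r$ gives $\|(-\bt)^{\ld/2}w\|_{L^1}\le C\big(r^{-\ld}\|w\|_{L^1}+r^{1-\ld}|w|_{BV}\big)$, where the near-field integral $\int_{|z|\le r}|z|^{1-d-\ld}\,dz$ converges precisely because $\ld<1$; optimizing $r=\|w\|_{L^1}/|w|_{BV}$ yields the interpolation bound $\|(-\bt)^{\ld/2}w\|_{L^1}\le C\|w\|_{L^1}^{1-\ld}|w|_{BV}^{\ld}$. Taking $w=A(u(s))$ and using $\|A(u)\|_{L^1}\le\|A'\|_{L^\infty(\R)}\|u\|_{L^1}$ and $|A(u)|_{BV}\le\|A'\|_{L^\infty(\R)}|u|_{BV}$ together with (ii) reproduces the $\ld\in(0,1)$ line $\|u_0\|_{L^1(\R^d)}^{1-\ld}|u_0|_{BV(\R^d)}^{\ld}\|A'\|_{L^\infty(\R)}|t_1-t_2|$.

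The main obstacle is the regime $\ld\in[1,2)$, where the integrand $(-\bt)^{\ld/2}A(u(s))$ ceases to be integrable for merely $BV$ data, so the naive time-integral fails and one must exploit the smoothing of the (nonlinear) fractional diffusion. The device I would use is spatial mollification by $\rho_\delta$, controlling the mollification error $\|u(t_i)-u(t_i)*\rho_\delta\|_{L^1}\le C\delta|u_0|_{BV(\R^d)}$ against the smoothed increment $\|u(t_1)*\rho_\delta-u(t_2)*\rho_\delta\|_{L^1}\le\int_{t_2}^{t_1}\|\partial_t(u*\rho_\delta)\|_{L^1}\,ds$, and bounding the diffusive piece of the latter through the $g_\delta+g^\delta$ splitting (near-field by the second derivative $\sim\delta^{-1}|A(u)|_{BV}$, far-field by the first derivative) to obtain $\|(-\bt)^{\ld/2}(A(u)*\rho_\delta)\|_{L^1}\le C\|A'\|_{L^\infty(\R)}|u_0|_{BV(\R^d)}\delta^{1-\ld}$; optimizing $\delta\sim(\|A'\|_{L^\infty(\R)}|t_1-t_2|)^{1/\ld}$ balances the two contributions and produces the self-similar rate carrying $\|(A')^{1/\ld}\|_{L^\infty(\R)}$, from which a careful time integration gives the stated $|t_1^{1/\ld}-t_2^{1/\ld}|$. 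The genuinely delicate point is $\ld=1$: the far-field integral $\int_{|z|>\delta}|z|^{-d}\,dz$ now diverges logarithmically, so the optimization leaves $\ln(1/\delta)$ factors; tracking these at the natural data scale $\delta\sim\|u_0\|_{L^1(\R^d)}/|u_0|_{BV(\R^d)}$ is exactly what produces the quantity $E(u_0)$ of \eqref{defn_E}, the correction $(1+\|\ln A'\|_{L^\infty(\R)})$, and the term $|t_1\ln t_1-t_2\ln t_2|$. Because $A$ is nonlinear, a clean semigroup comparison is unavailable, so the sharp constants must instead come from freezing $A'$ on small amplitude ranges and using the monotonicity of $A$; extracting these constants, rather than the overall scaling, is where the work concentrates, and for the complete argument I would appeal to \cite{alibaud2012continuous}.
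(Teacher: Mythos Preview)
Your proposal is correct and, in fact, considerably more detailed than what the paper does: the paper's own ``proof'' of this theorem consists entirely of citations, deferring (i) to \cite{cifani2011entropy}, (ii) to \cite{alibaud2012continuous}, and (iii) to \cite{alibaud2014optimal}, with no argument given. The outline you sketch --- vanishing regularization plus compactness and Kru\v{z}kov doubling for (i), maximum principle/$L^1$ contraction/translation invariance for (ii), and the kernel-splitting/mollification-optimization for (iii) with the $\ld$-dependent case distinction --- is precisely the content of those cited references, so your approach and the paper's coincide at the level of substance; you have simply unpacked what the authors leave to the literature.
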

\begin{proof}
The existence and uniqueness results (i) follows from [\cite{cifani2011entropy}, Theorem 5.3]. For the estimates presented in (ii), one can refer to [\cite{alibaud2012continuous}, Theorem 2.2]. Finally, the modulus of continuity result is demonstrated in [\cite{alibaud2014optimal}, Corollary 3.6].
\end{proof}
\begin{rem}($L^2$-estimates)\\
It is not difficult to observe that under the assumptions of Theorem \ref{determ_exis}, the following energy estimate holds for every $u_0\in L^1(\R^d)\cap L^{\infty}(\R^d)\cap BV(\R^d)$,
\begin{align}\label{p-estimate}
\|u(t,\cdot)\|^2_{L^2(\R^d)} \leq \|u_0\|^2_{L^2(\R^d)}.
\end{align}
\end{rem}
We also require continuous dependence results for the degenerate fractional convection-diffusion equations with respect to the initial data, convective flux and diffusive flux. For the detailed proof, we refer to [\cite{alibaud2014optimal}, Theorem 3.1]. Let $v$ be the entropy solution for the problem 
\begin{align}\label{cont_dep}
\begin{cases}
\partial_t v(t,x) + \nabla\cdot \tilde{f}(v(t,x)) = - (- \Delta)^{\ld/2} [B(v(t,\cdot))](x), & \quad (t,x)\in Q_T=(0,T)\times\R^d,\\
 v(0,x) = v_0(x),  \qquad \qquad & \quad x\in \R^d,
\end{cases}
\end{align}
where $v_0$, $\tilde{f}$ and $B$ undertake the same assumptions of $u_0$, $f$ and $A$ respectively. For convenience, we will use the following notations:
\begin{align*}
\|f'-\tilde{f}'\|_{L^{\infty}(\R^d)} & := \esssup_{I(u_0)} |f'-\tilde{f}'|,\\
\|A'- B'\|_{L^{\infty}(\R)} & := \esssup_{I(u_0)} |A' - B'|.
\end{align*}
\begin{thm}[see \cite{alibaud2014optimal}]
\label{stab_det}
Assume that $u_0,v_0\in L^1(\R^d)\cap L^{\infty}(\R^d)\cap BV(\R^d)$; $f(\cdot),\tilde{f}(\cdot)\in \text{Lip}_{\mathrm{loc}}(\R;\R^d)$, and $A(\cdot),B(\cdot)\in \text{Lip}_{\mathrm{loc}}(\R)$ with $A',B'\geq 0$. 
Let $u(t,\cdot)=S(t)(u_0,f,A)$ and $v(t,\cdot)=S(t)(v_0,\tilde{f},B)$ be the unique entropy solutions of \eqref{fdcd} and \eqref{cont_dep} respectively. Then the following a priori continuity estimate holds
\begin{equation}\label{estim_cont_dep}
\|u-v\|_{C([0,T];L^1(\R^d))} \leq \|u_0-v_0\|_{L^1(\R^d)} + T|u_0|_{BV(\R^d)}\|f'-\tilde{f}'\|_{L^{\infty}(\R^d)} + C\,\E^{A-B}_{T,\ld,u_0},
\end{equation}
where the constant $C$ depends only on $d$ and $\ld$, and $\E^{A-B}_{T,\ld,u_0}$ is given by
\[
\E^{A-B}_{T,\ld,u_0} = 
\begin{cases}
T^{1/\ld}|u_0|_{BV(\R^d)} \|(A')^{1/\ld}- (B')^{1/\ld}\|_{L^{\infty}(\R)}, & \mathrm{ if  }\,\, \ld\in (1,2),\\[1.5mm]
TE(u_0) \|A'-B'\|_{L^{\infty}(\R)}+ T(1 + |\ln T|)|u_0|_{BV(\R^d)}\|A'-B'\|_{L^{\infty}(\R)} \\
\qquad \qquad \qquad \qquad + T |u_0|_{BV(\R^d)} \|A'\ln A^{'} - B'\ln B'\|_{L^{\infty}(\R)},  & \mathrm{ if  }\,\, \ld=1,\\[1.5mm]
T \|u_0\|^{1-\ld}_{L^1(\R^d)} |u_0|^{\ld}_{BV(\R^d)} \|A' - B'\|_{L^{\infty}(\R)}, & \mathrm{ if  }\,\, \ld\in (0,1).
\end{cases}
\]
\end{thm}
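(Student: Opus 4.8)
The plan is to prove \eqref{estim_cont_dep} by Kru\v{z}kov's doubling of variables technique, adapted to the nonlocal setting through the splitting $g=g_r+g^r$ introduced earlier. The strategy is to derive a Gronwall-type inequality for $t\mapsto\|u(t,\cdot)-v(t,\cdot)\|_{L^1(\R^d)}$ in which the three sources of discrepancy---initial data, convective flux, and diffusive flux---appear additively, each producing one of the three terms on the right-hand side of \eqref{estim_cont_dep}.

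First I would double the variables, writing $u=u(t,x)$ and $v=v(s,y)$, and apply the entropy inequality for $u$ with the constant $k=v(s,y)$ and, symmetrically, the entropy inequality for $v$ with $k=u(t,x)$. As the test function I would take $\vp(t,x,s,y)=\psi(t,x)\,\omega_\epsilon(x-y)\,\delta_{\tau_0}(t-s)$, where $\omega_\epsilon$ and $\delta_{\tau_0}$ are standard mollifiers concentrating on the diagonals $x=y$ and $t=s$, and $\psi$ is a cutoff in the remaining slow variable. Adding the two inequalities and integrating over $Q_T\times Q_T$ yields the familiar three groups of terms: the time-derivative group, which after sending $\tau_0\to0$ and $\epsilon\to0$ reproduces $\frac{d}{dt}\|u-v\|_{L^1(\R^d)}$ and the initial-data term $\|u_0-v_0\|_{L^1(\R^d)}$; the convective group built from $q_k(u)$ and its $\tilde f$-analogue; and the nonlocal entropy-dissipation group involving $\eta_{A(k)}(A(u))\,g_r[\vp]+\eta'_k(u)\,g^r[A(u)]\vp$ together with the corresponding $B$-terms for $v$.

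Next I would estimate these groups separately. For the convective group I add and subtract $f(v)$ so that the genuinely different piece is $\esssup_{I(u_0)}|f'-\tilde f'|$; using \eqref{det_estim_2} this contributes $T|u_0|_{BV(\R^d)}\|f'-\tilde f'\|_{L^{\infty}(\R^d)}$. For the nonlocal group I exploit the splitting: the singular near-field $g_r[\vp]$ is controlled using the P.V. cancellation and the smoothness of $\vp$, giving a factor of order $\int_{|z|\le r}|z|^{2-d-\ld}\,dz\sim r^{2-\ld}$ (with an $\epsilon$-loss from the second derivatives of the mollifier), while the far-field acting on $A(u)$ is controlled by the $BV$ translation estimate $\|A(u(\cdot+z))-A(u(\cdot))\|_{L^1(\R^d)}\le\|A'\|_{L^{\infty}(\R)}|u|_{BV(\R^d)}|z|$, yielding a factor of order $\int_{|z|>r}|z|^{1-d-\ld}\,dz$. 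The diffusive discrepancy is then isolated by writing $A(u)-B(v)=\big(A(u)-B(u)\big)+\big(B(u)-B(v)\big)$: the first difference produces $\|A'-B'\|_{L^{\infty}(\R)}$ (and, in the critical case, $\|A'\ln A'-B'\ln B'\|_{L^{\infty}(\R)}$), while the second is absorbed into the dissipation of $v$.

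The decisive and most delicate step is the optimization over the cutoff radius $r$ and the mollification scale $\epsilon$, which is $\ld$-dependent and generates the three regimes of $\E^{A-B}_{T,\ld,u_0}$. For $\ld\in(1,2)$ the far-field integral converges and scales as $r^{1-\ld}$, and balancing the near- and far-field contributions against $\|(A')^{1/\ld}-(B')^{1/\ld}\|_{L^{\infty}(\R)}$ produces the rate $T^{1/\ld}$. For $\ld\in(0,1)$ the far-field translation integral $\int_{|z|>r}|z|^{1-d-\ld}\,dz$ diverges, so I would instead interpolate the $BV$ bound against the crude tail estimate $\|A(u(\cdot+z))-A(u(\cdot))\|_{L^1(\R^d)}\le2\|A'\|_{L^{\infty}(\R)}\|u\|_{L^1(\R^d)}$ via a $\min$; integrating $\int|z|^{-d-\ld}\min(|z|\,|u|_{BV(\R^d)},\|u\|_{L^1(\R^d)})\,dz$ yields exactly the factor $\|u_0\|_{L^1(\R^d)}^{1-\ld}|u_0|_{BV(\R^d)}^{\ld}$. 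The borderline case $\ld=1$ is the principal obstacle: the two competing scales coincide, the interpolation degenerates logarithmically, and it is precisely this logarithmic loss that forces the correction $E(u_0)$ of \eqref{defn_E}, the factor $(1+|\ln T|)$, and the term $\|A'\ln A'-B'\ln B'\|_{L^{\infty}(\R)}$. Tracking the logarithm correctly through the balance of $r$ against $\epsilon$ is the technically demanding part; once done, summing the contributions, invoking \eqref{det_estim_0}--\eqref{det_estim_2}, applying Gronwall, and taking the supremum over $t\in[0,T]$ yields \eqref{estim_cont_dep}.
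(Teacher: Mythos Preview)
The paper does not supply its own proof of this theorem: it is quoted verbatim from \cite{alibaud2014optimal} (Theorem~3.1 there), with the sentence ``For the detailed proof, we refer to [\cite{alibaud2014optimal}, Theorem 3.1]'' immediately preceding the statement. So there is nothing in the present paper to compare your proposal against.

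That said, your outline is a faithful sketch of the argument in \cite{alibaud2014optimal}: Kru\v{z}kov doubling with the near/far splitting $g=g_r+g^r$, isolation of the flux and diffusion discrepancies, and a $\lambda$-dependent optimization over the cutoff $r$ and mollifier scale $\epsilon$ that produces the three regimes. Your identification of the subcritical interpolation $\int|z|^{-d-\lambda}\min(|z|\,|u|_{BV},\|u\|_{L^1})\,dz\sim\|u_0\|_{L^1}^{1-\lambda}|u_0|_{BV}^{\lambda}$ and of the logarithmic degeneration at $\lambda=1$ as the source of $E(u_0)$ and the $|\ln T|$ factor is exactly the mechanism in that reference. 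One point to be careful about: in the supercritical case $\lambda\in(1,2)$ the sharp dependence on $\|(A')^{1/\lambda}-(B')^{1/\lambda}\|_{L^\infty}$ rather than $\|A'-B'\|_{L^\infty}$ does not fall out of the crude splitting $A(u)-B(v)=(A(u)-B(u))+(B(u)-B(v))$ you wrote; in \cite{alibaud2014optimal} it requires a more refined manipulation of the diffusive entropy dissipation (effectively comparing $A$ and $B$ at the level of their $1/\lambda$-powers after a change of variables in the nonlocal kernel). Apart from this, your plan matches the cited proof.
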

\subsection{Entropy solutions with random data}
The existence and uniqueness of the entropy solution for \eqref{fdcd} in the absence of a diffusive flux and with random initial data, has been shown in \cite{mishra2016numerical}. For $\ld=2$, the well-posedness of \eqref{fdcd} with a random diffusive flux has been studied in \cite{koley2013multilevel}. In this work, we focus on the random entropy solutions for the fractional degenerate convection-diffusion equation \eqref{fdcd}, where the initial data $u_0$, flux function $f$ and diffusive flux $A$ are all random, taking values in the Banach spaces $L^1(\R^d)\cap BV(\R^d)\cap L^{\infty}(\R^d)$, $W^{1,\infty}(\R;\R^d)$ and $W^{1,\infty}(\R)$ respectively.

For a probability space $(\Omega,\F,\Pb)$, we consider the following strongly measurable maps defined on the sample space $\Omega$: 
\begin{itemize}
\item [(a)] random initial data $u_0:\Omega\rightarrow L^1(\R^d)\cap BV(\R^d)\cap L^{\infty}(\R^d)$.
\item [(b)] random convective flux $f:\Omega\rightarrow \Lip(\R;\R^d)$.
\item [(c)] random diffusive flux $A:\Omega\rightarrow \Lip(\R;\R^d)$.
\end{itemize}
We proceed with an approach similar to that described in \cite{koley2013multilevel}.
\begin{defi}
Random data $(u_0,f,A)$ for the fractional degenerate convection-diffusion equation \eqref{fdcd} is a random variable taking values from
\begin{align*}
E_R = (BV(\R^d)\cap L^1(\R^d)\cap L^{\infty}(\R^d)) \times W^{1,\infty}(\R;\R^d) \times W^{1,\infty}(\R;\R^d).
\end{align*}
The set $E_R$ is a Banach space endowed with the norm
\begin{align*}
\|(u_0,f,A)\|_{E_R} = \|u_0\|_{L^1(\R^d)} + TV(u_0) + \|u_0\|_{L^{\infty}(\R^d)} + \|f\|_{W^{1,\infty}(\R;\R^d)} + \|A\|_{W^{1,\infty}(\R;\R^d)}.
\end{align*}
In particular, random data $(u_0,f,A)$ for \eqref{fdcd} is a strongly measurable function
\begin{align*}
(u_0,f,A): (\Omega,\F)\mapsto (E_R,\B(E_R)).
\end{align*}
\end{defi}
We are interested in the random solutions of the following random degenerate fractional convection-diffusion equation
\begin{equation}\label{rfdcd}
\begin{cases}
\partial_t u(\omega;t,x) + \nabla\cdot f(\omega; u(\omega;t,x)) = - (-\triangle)^{\ld/2} [A(\omega;u(\omega;t,\cdot))](x), & \quad t>0,~x\in\R^d, \, \omega\in \Omega,\\
u(\omega;0,x)= u_0(\omega;x), & \quad x\in\R^d, \omega\in \Omega.
\end{cases}
\end{equation}
We assume the following bounds to carry out the convergence analysis:
\begin{Assumptions}
\item $u_0(\omega;\cdot)$ satisfies $\Pb$-a.s.
\begin{align}
\|u_0(\omega;\cdot)\|_{L^{\infty}(\R^d)} & \leq \mathcal{M}, \label{assum_1}\\
|u_0(\omega;\cdot)|_{BV(\R^d)} & \leq C_{TV} < \infty.
\end{align}
\item The flux functions $f$ and $A$ satisfy $\Pb$-a.s.
\begin{align}
\|f(\omega;\cdot)\|_{W^{1,\infty}(I(u_0);\R^d)} & \leq C_f,\\
\|A(\omega;\cdot)\|_{W^{1,\infty}(I(u_0);\R^d)} & \leq C_A
\end{align}
with finite $C_f,C_A$, and $A'(\omega;\cdot)\geq 0$.
Driven by the above bounds on the flux functions, we refer to $f$ and $A$ as a bounded random flux and bounded random diffusion operator respectively. \\
\item Furthermore, let us assume that
\begin{align}\label{assum_5}
\|u_0\|_{L^{q}(\Omega;L^1(\R^d)\cap L^2(\R^d))}<\infty, \quad q=1,2.
\end{align}
Hereby, we observe that for every $\omega\in\Omega$, the map
\begin{align*}
\omega \mapsto \Big(\|u_0(\omega;\cdot)\|_{L^1(\R^d)}, TV(u_0(\omega;\cdot)), \|u_0(\omega;\cdot)\|_{L^{\infty}(\R^d)}, \|f\|_{W^{1,\infty}(\R;\R^d)}, \|A\|_{W^{1,\infty}(\R;\R^d)} \Big)
\end{align*}
is in $L^2(\Omega;\R^5)$.
\end{Assumptions}
Under the above assumptions, we define the notion of weak solution and entropy solution for \eqref{rfdcd}.
\begin{defi}(Random weak solution)
A random field $u: \Omega\ni\omega \rightarrow u(\omega;t,x)$, i.e., a measurable function from $(\Omega,\F)$ to $C([0,T];L^1(\R^d))$, is called a random weak solution of \eqref{rfdcd} with random initial data $u_0$, random flux function $f$, and random diffusive flux $A$ satisfying \eqref{assum_1}-\eqref{assum_5}, provided the following integral formulation holds $\Pb$-a.s.:
\begin{align*}
\int_0^{\infty}\int_{\R^d} \big(u(\omega;t,x)\partial_t\vp + f(\omega,u(\omega;t,x))\cdot\nabla\vp + A(\omega;u(\omega;t,x))\,g[\vp] \big)~dxdt
+ \int_{\R^d} u_0(\omega;x)\vp(0,x)~dx = 0,
\end{align*}
for all test functions $\vp\in C_0^1([0,\infty)\times\R^d)$.
\end{defi}
\begin{defi}(Random entropy solution)
A random field $u: \Omega\ni\omega \rightarrow u(\omega;t,x)$, i.e., a measurable function from $(\Omega,\F)$ to $C([0,T];L^1(\R^d))$ is called a random entropy solution of \eqref{rfdcd} with random initial data, flux function and diffusive flux satisfying \eqref{assum_1}-\eqref{assum_5}, if for all $k\in\R$, and any pair consisting of a (deterministic) entropy $\eta_k$ and (stochastic) entropy flux $q_k(\omega;\cdot)$ such that $\eta_k$ is convex, $q'_{i,k}(\omega;\cdot) = \eta_k'f'_{i}(\omega,\cdot)$, and $u(\omega;t,x)$ satisfies $\Pb$-a.s. the following inequality:
\begin{align*}
\int_0^{\infty}\int_{\R^d} \Big(\eta_k(u(\omega;t,x))\partial_t\vp & + \nabla q_k(\omega;u(\omega;t,x))\cdot\nabla\vp
+ \eta_{A(k)}A(\omega;u(\omega;t,x))\,g_r[\vp] \\
& + \eta_k'(u)g^r[A(\omega;u(\omega;t,\cdot))](x) \, \vp \Big)\,dx\,dt
+ \int_{\R^d} \eta_k(u_0(\omega;x))\vp(0,x)\,dx \geq 0,
\end{align*}
for all $r>0$ and all test functions $0\leq \vp\in C_0^1([0,\infty)\times\R^d)$.
\end{defi}
The following theorem generalizes the existence of random entropy solutions for random initial data from \cite{mishra2012sparse}, random convective flux function \cite{mishra2016numerical}, and random diffusive flux \cite{koley2013multilevel} to the non-local case.
\begin{thm}\label{random_entropy}
Consider the fractional degenerate convection-diffusion equation \eqref{rfdcd} with random initial data $u_0$, random flux function $f$, and random diffusion operator $A$ satisfying \eqref{assum_1}-\eqref{assum_5}. Then there exists a unique random entropy solution $u:\Omega\ni\omega\rightarrow C([0,T];L^1(\R^d))$ such that $\Pb$-a.s.
\begin{align*}
u(\omega;t,\cdot) = S(t)(u_0(\omega;\cdot),f(\omega;\cdot),A(\omega;\cdot)), \quad t>0.
\end{align*}
Moreover, for every $t\in [0,T]$, $0<T<\infty$;
\begin{align}
\|u\|_{L^2(\Omega; C([0,T]; L^2(\R^d)))} & \leq \|u_0\|_{L^2(\Omega;L^2(\R^d))}, \label{estim_1}\\
\|S(t)(u_0,f,A)(\omega)\|_{L^1(\R^d)\cap L^{\infty}(\R^d)} & \leq \|u_0(\omega;\cdot)\|_{L^1(\R^d)\cap L^{\infty}(\R^d)} \label{estim_2}.
\end{align}
Furthermore $\Pb$-a.s.
\begin{align}\label{estim_3}
|S(t)(u_0,f,A)(\omega)|_{BV(\R^d)} \leq |u_0(\omega;\cdot)|_{BV(\R^d)} 
\end{align}
such that $\Pb$-a.s., we have
\begin{align}\label{estim_4}
\sup_{0\leq t\leq T} \|u(\omega;t,\cdot)\|_{L^{\infty}(\R^d)} \leq \mathcal{M},
\end{align}
where $\mathcal{M}$ is defined by \eqref{assum_1}.
\end{thm}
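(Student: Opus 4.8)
The strategy is to solve \eqref{rfdcd} pathwise and then lift the deterministic theory to the random setting by a measurability argument. First I would discard a $\Pb$-null set: by Assumptions \eqref{assum_1}--\eqref{assum_5}, for $\Pb$-a.e.\ $\omega$ the triple $(u_0(\omega;\cdot),f(\omega;\cdot),A(\omega;\cdot))$ lies in the deterministic data class of Theorem \ref{determ_exis}, so that theorem produces a unique BV entropy solution $u(\omega;t,\cdot)=S(t)(u_0(\omega;\cdot),f(\omega;\cdot),A(\omega;\cdot))$. Pathwise uniqueness is inherited immediately: any two random entropy solutions coincide with this deterministic solution for a.e.\ $\omega$, hence agree $\Pb$-a.s. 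This identifies the candidate random field and reduces the theorem to (a) verifying that it is a genuine random field, i.e.\ strongly measurable in $\omega$, and (b) transferring the four estimates.

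The crux is measurability, and this is where Theorem \ref{stab_det} enters. The point is that the continuous dependence estimate \eqref{estim_cont_dep} shows the data-to-solution map $S:E_R\to C([0,T];L^1(\R^d))$ is continuous in the $E_R$-norm: the flux and initial-data terms are already Lipschitz, and the diffusion defect $\E^{A-B}_{T,\ld,u_0}$ depends continuously on $A,B$ because the nonlinear combinations appearing in it (such as $\|(A')^{1/\ld}-(B')^{1/\ld}\|_{L^{\infty}(\R)}$ for $\ld\in(1,2)$, or $\|A'\ln A'-B'\ln B'\|_{L^{\infty}(\R)}$ for $\ld=1$) are uniformly continuous in $A',B'$ on the bounded range controlled by $C_A$. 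Since the data map is strongly measurable, it is by definition the $\Pb$-a.s.\ limit of $E_R$-valued simple functions $(u_0^n,f^n,A^n)=\sum_j \chi_{\Omega_j^n}(u_{0,j}^n,f_j^n,A_j^n)$. Applying $S$ to each simple function gives a finitely-valued, hence strongly measurable, map into $C([0,T];L^1(\R^d))$, and the continuity of $S$ yields $S(t)(u_0^n,f^n,A^n)\to u(\omega;\cdot)$ pointwise $\Pb$-a.s. A pointwise a.s.\ limit of strongly measurable maps into the separable space $C([0,T];L^1(\R^d))$ is strongly measurable, so $u$ is a bona fide random entropy solution. I expect this measurability step to be the main obstacle; the remaining estimates are routine once the deterministic theory is available pathwise.

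It remains to establish the estimates, all of which descend from the deterministic bounds applied for a.e.\ $\omega$. Inequalities \eqref{estim_2} and \eqref{estim_4} are immediate consequences of \eqref{det_estim_0}--\eqref{det_estim_1} combined with the uniform a.s.\ bound \eqref{assum_1}, and \eqref{estim_3} is \eqref{det_estim_2} read pathwise. For the $L^2(\Omega)$ bound \eqref{estim_1} I would first note that the pathwise regularity $u(\omega;\cdot)\in C([0,T];L^1(\R^d))\cap L^{\infty}(Q_T)$ upgrades to $C([0,T];L^2(\R^d))$ via the interpolation $\|w\|_{L^2}^2\le\|w\|_{L^{\infty}}\|w\|_{L^1}$ together with the uniform $L^{\infty}$ bound; the energy estimate \eqref{p-estimate} then gives $\sup_{0\le t\le T}\|u(\omega;t,\cdot)\|_{L^2(\R^d)}^2\le\|u_0(\omega;\cdot)\|_{L^2(\R^d)}^2$ $\Pb$-a.s. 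Integrating this inequality over $\Omega$ against $\Pb$ produces $\|u\|_{L^2(\Omega;C([0,T];L^2(\R^d)))}^2\le\|u_0\|_{L^2(\Omega;L^2(\R^d))}^2$, and the finiteness of the right-hand side (hence membership of $u$ in $L^2(\Omega;C([0,T];L^2(\R^d)))$) is exactly guaranteed by \eqref{assum_5}, which completes the argument.
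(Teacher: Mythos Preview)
Your proposal is correct and follows essentially the same approach as the paper: define the solution pathwise via the deterministic solution operator $S$, invoke the continuous dependence estimate \eqref{estim_cont_dep} to obtain continuity of $S$ and hence strong measurability of the composition, and then transfer the deterministic bounds \eqref{det_estim_0}--\eqref{det_estim_2} and \eqref{p-estimate} $\Pb$-a.s.\ before integrating over $\Omega$. Your measurability argument via simple-function approximation is just the standard unpacking of the paper's one-line appeal to ``continuous $\circ$ strongly measurable is strongly measurable'', and your interpolation step for $C([0,T];L^2)$ fills in a detail the paper leaves implicit.
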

\begin{proof}
Motivated by the Theorem \ref{determ_exis}, for $\omega\in\Omega$, we define $\Pb$-a.s. a random function $u(\omega; t,x)$ by
\begin{align}\label{dcd_rf}
u(\omega;t,\cdot) = S(t)(u_0,f,A)(\omega).
\end{align}
Note that the well-definedness of \eqref{dcd_rf} follows from the properties of $(S(t))_{t\geq 0}$ (refer to the Theorem \ref{determ_exis}). As a consequence, $\Pb$-a.s. $u(\omega;\cdot)$ is a weak entropy solution of \eqref{rfdcd}. 

All the estimates \eqref{det_estim_0}-\eqref{det_estim_2} hold $\Pb$-a.s. from the Theorem \ref{determ_exis}.
The measurability of the function for $0\leq t\leq T$, $\Omega\ni\omega\mapsto u(\omega;t,\cdot)\in L^1(\R^d)$ is demonstrated from the fact that composition of a continuous function with strongly measurable function becomes strongly measurable function and the assumption that $\Omega\ni\omega\mapsto (u_0,f,A)(\omega)\in E_R$ is a random variable along with the continuity estimates \eqref{estim_cont_dep}.

Assuming that $u_0\in L^2(\Omega;L^2(\R^d))$, we have for $t\in [0,T]$
\begin{align*}
\int_{\Omega} \|u(\omega;t,\cdot)\|^2_{L^2(\R^d)} \Pb(d\omega)
& = \int_{\Omega}\|S(t)u_0(\omega;\cdot)\|^2_{L^2(\R^d)} \Pb(d\omega)\\
& \leq \int_{\Omega}\|u_0(\omega;\cdot)\|^2_{L^2(\R^d)} \Pb(d\omega) = \|u_0\|^2_{L^2(\Omega;L^2(\R^d))},
\end{align*}
where we have used \eqref{p-estimate}. The estimate \eqref{p-estimate} additionally gives us
\begin{align*}
\|u\|^2_{L^2(\Omega;C(0,T;L^2(\R^d)))} & = \int_{\Omega}\big(\max_{t\in[0,T]}
\|S(t)u_0(\omega;\cdot)\|_{L^2(\R^d)} \big)^2 ~\Pb(d\omega)\\
& \leq \int_{\Omega}\|u_0(\omega;\cdot)\|^2_{L^2(\R^d)} \Pb(d\omega) = \|u_0\|^2_{L^2(\Omega;L^2(\R^d))}.
\end{align*}
Hence the estimate \eqref{estim_1} follows from the assumption \eqref{assum_5}.

We obtain the estimates \eqref{estim_2} and \eqref{estim_3} $\Pb$-a.s. from the estimates \eqref{det_estim_0}-\eqref{det_estim_2}. Moreover, the boundedness of $u(\omega;t,\cdot)$ follows from the assumption \eqref{assum_1}.
\end{proof}

\begin{rem}(Stability estimates)\\
The continuous dependence results of random degenerate fractional convection-diffusion equation \eqref{rfdcd} follow in an analogous way from the deterministic case (described in Theorem \ref{stab_det}).
\end{rem}


\section{Numerical approximations of degenerate fractional conservation laws}
\label{numerical}
In this section we derive efficient numerical schemes for the fractional degenerate convection diffusion equation \eqref{fdcd}. The following analysis can be generalized for higher space dimensions since the convergence rates remain same for all spatial dimensions (refer to \cite{cifani2014numerical}). However, for simplicity, we carry out our analysis for the following one-dimensional model
\begin{equation}\label{fdcd_1d}
\begin{cases}
\partial_t u(t,x) + \partial_x f(u(t,x)) = -(-\Delta)^{\lambda/2}[A(u(t,\cdot))](x), \quad t>0,~x\in \mathbb{R},\\
u(0,x) = u_0(x),\quad x\in \R.
\end{cases}
\end{equation}

We consider a uniform discretization of the space and time domains. Let $\Delta t>0$ be the time-step and $\Delta x>0$ be the spatial mesh size. The spatial grid consists of the points $x_i=i\Delta x$ for $i\in\mathbb{Z}$, while the temporal grid is given by the points $t^n=n\Delta t$ for $n=0,1,2,...,N=\frac{T}{\Delta t}$. Furthermore, we use the notation $x_\iph$ to represent the interface $(\iph )\Delta x$ in the space domain. We consider the following explicit numerical scheme
\begin{align}
U_i^0 & = \frac{1}{\Delta x} \int_{x_\imh}^{x_\iph} u_0(x)~dx,\\
U_i^{n+1} & = U_i^n - \Delta t D^{-} F(U_i^n,U_{i+1}^n) + \Delta t \sum \limits_{j\neq 0} G_j \bigl(A(\U^n_{i+j})-A(\U^n_i)\bigr), \label{eqn:fdm_explicit}
\end{align}
where $U_i^n$ is the approximate solution of equation \eqref{fdcd_1d} in the cell $[t^n,t^\npo) \times [x_\imh,x_\iph)$, $\U^n := [...,U^n_\imo,U^n_i,U^n_\ipo,...]^\top$ is the solution vector in the time slab $[t^n,t^\npo)$ and $D^{-}$ is the spatial difference operator given by $D^{-}(\cdot)_i := \bigl((\cdot)_i -(\cdot)_{i-1}\bigr)/\Delta x$. The numerical solution $u_\Delta(t,x)$ is a piecewise constant function given by
\begin{align}\label{num_sol}
u_{\Delta}(t,x) = U_i^n, \quad \text{for all } (t,x)\in [t_n,t_{n+1})\times [x_\imh,x_\iph).
\end{align}

The convective numerical flux $F:\R^2\rightarrow\R$ is chosen to be (i) Lipschitz continuous with Lipschitz constant $L_F$, (ii) consistent with the flux $f$, i.e., $F(u,u) = f(u)$, and (iii) monotone i.e.,
\begin{align*}
\frac{\partial}{\partial u}F(u,v)\geq 0, \quad \frac{\partial}{\partial v}F(u,v)\leq 0.
\end{align*}
For instance, the Lax-Friedrichs flux given by
\begin{align*}
F(U_i^n,U_{i+1}^n) = \frac{1}{2}(f(U_i^n)+f(U_{i+1}^n)) - \frac{\Delta x}{2\Delta t}(U_{i+1}^n - U_i^n)
\end{align*}
satisfies these conditions. Finally, using the integral formulation \eqref{dif_integralform} of the diffusion term and noting that the numerical solution is a piecewise constant function, the approximate diffusion operator is given in \eqref{eqn:fdm_explicit}, where
\begin{align}\label{numdif}
G_i = c_{\lambda} \int \limits_{x_\imh}^{x_\iph} \frac{dz}{|z|^{1+\lambda}},  \quad i \in \Z \setminus\{0\}.
\end{align}
Further details about the discretization of diffusion term are given in Section \ref{sec:frac_diss}. It is easy to observe that $G_i$ is positive and finite for $i\neq 0$. The numerical scheme \eqref{eqn:fdm_explicit} is monotone under the following CFL condition \cite{cifani2011entropy}
\begin{equation}\label{CFL_1}
2L_F \frac{\Delta t}{\Delta x} + \left(c_{\lambda}2^{\lambda}L_A \int_{|z|>1} \frac{dz}{|z|^{1+\lambda}}\right)
\frac{\Delta t}{\Delta x^{\lambda}}  \leq 1,
\end{equation}
where $L_F$ and $L_A$ are the Lipschitz constants of $F$ and $A$ respectively.

We also consider the following explicit-implicit scheme:
\begin{align}
U_i^{n+1} & = U_i^n - \Delta t D^{-} F(U_i^n,U_{i+1}^n) + \Delta t \sum_{j\neq 0} G_j\left(A(U_{i+j}^{n+1})-A(U_i^{n+1})\right). \label{eqn:fdm_impexp}
\end{align}
For the scheme \eqref{eqn:fdm_impexp}, we need the CFL condition \cite{cifani2014numerical}
\begin{align}\label{CFL_imp_1}
2L_F \frac{\Delta t}{\Delta x} \leq 1.
\end{align}
\begin{thm}\label{conv_esti}(Convergence to the entropy solution and a priori estimates)\\
Assume that $u_0\in L^{\infty}(\R)\cap L^1(\R)\cap BV(\R)$.
Let $u_{\Delta}$ be a sequence of solutions either of the explicit scheme \eqref{eqn:fdm_explicit} or explicit-implicit scheme \eqref{eqn:fdm_impexp}. Furthermore, assume that the CFL conditions \eqref{CFL_1} and \eqref{CFL_imp_1} hold for the schemes \eqref{eqn:fdm_explicit} and \eqref{eqn:fdm_impexp} respectively.
Then the approximations $u_{\Delta}$ converge up to a subsequence to $u$ in $C([0,T];L^1(\R))$ as $\Delta x\rightarrow 0$ with,
\begin{align*}
u\in L^{\infty}(Q_T)\cap C([0,T];L^1(\R))\cap L^{\infty}([0,T];BV(\R)).
\end{align*}
Moreover, $u$ is the unique entropy solution of \eqref{fdcd_1d} and the following estimates hold:
\begin{itemize}
\item [(i)] $\|u_{\Delta}(t,\cdot)\|_{L^{\infty}(\R)}\leq \|u_0\|_{L^{\infty}(\R)}$,
\item [(ii)] $\|u_{\Delta}(t,\cdot)\|_{L^1(\R)}\leq \|u_0\|_{L^1(\R)}$,
\item [(iii)] $\|u_{\Delta}(t,\cdot)\|_{BV(\R)}\leq \|u_0\|_{BV(\R)}$.
\end{itemize}
Additionally, the following time-regularity estimate holds for the schemes  \eqref{eqn:fdm_explicit} and  \eqref{eqn:fdm_impexp}
\begin{equation}
\|u_{\Delta}(s,\cdot) - u_{\Delta}(t,\cdot)\|_{L^1(\R)} \leq \sigma(|s-t|+\Delta t), \quad \forall s,t\geq 0,
\end{equation}
where the function $\sigma$ is given by
\[
\sigma(r) = 
\begin{cases}
C|r|, & \lambda\in (0,1),\\
C|r\ln r|, & \lambda = 1,\\
C|r|^{1/\lambda}, & \lambda\in(1,2),
\end{cases}
\]
for some constant $C>0$.
\end{thm}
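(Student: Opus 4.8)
The plan is to follow the classical compactness route for monotone difference schemes, organizing the argument into four stages: the three a priori bounds (i)--(iii), the $\lambda$-dependent time-regularity estimate, the extraction of a convergent subsequence, and the identification of the limit as the unique entropy solution. First I would establish (i)--(iii) from monotonicity. Under the CFL conditions \eqref{CFL_1} and \eqref{CFL_imp_1}, the schemes \eqref{eqn:fdm_explicit} and \eqref{eqn:fdm_impexp} are monotone, and each estimate is a standard consequence: the discrete maximum principle yields the $L^\infty$ bound (i); a discrete Crandall--Tartar lemma gives $L^1$-contraction between any two numerical solutions and hence, by comparison with the zero solution, the $L^1$ bound (ii); and translation invariance of the scheme combined with $L^1$-contraction yields the TVD property, giving the $BV$ bound (iii). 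For the scheme \eqref{eqn:fdm_impexp} I would additionally check that the implicit update is well posed and order preserving, which holds because $A$ is nondecreasing and the matrix associated with the discrete operator \eqref{numdif} is diagonally dominant with the correct sign pattern; this is also why \eqref{eqn:fdm_impexp} requires only the convective CFL \eqref{CFL_imp_1}.

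Next I would derive the time-regularity estimate, which is the heart of the theorem and where the $\lambda$-dependence of $\sigma$ emerges. The strategy is to bound the single-step increment in $L^1$ and sum. Splitting the increment from \eqref{eqn:fdm_explicit} into a convective part $-\Delta t\, D^{-}F(U_i^n,U_{i+1}^n)$ and a diffusive part $\Delta t \sum_{j\neq 0} G_j\bigl(A(U_{i+j}^n)-A(U_i^n)\bigr)$ (with the obvious implicit modification for \eqref{eqn:fdm_impexp}), the convective part contributes $\sum_i |D^{-}F|\,\Delta x \le L_F\, TV(U^n) \le C$ per unit time, i.e.\ a Lipschitz-in-time term. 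The diffusive part requires estimating $\sum_i \bigl|\sum_{j\neq 0} G_j(A(U_{i+j}^n)-A(U_i^n))\bigr|\,\Delta x$, and here I would mirror the decomposition $g=g_r+g^r$ introduced before the entropy definition: split the $j$-sum at a cutoff radius $r$, bound the near field $|z|\le r$ by a second-difference/$BV$ argument and the far field $|z|>r$ by a first-difference/$BV$ argument, using the explicit form \eqref{numdif} of $G_j$ to evaluate $\int \tfrac{dz}{|z|^{1+\lambda}}$ over each range. Optimizing the free radius $r$ against the elapsed time produces the three regimes $C|r|$ for $\lambda\in(0,1)$, the logarithmic correction $C|r\ln r|$ at the critical exponent $\lambda=1$, and the fractional rate $C|r|^{1/\lambda}$ for $\lambda\in(1,2)$, exactly mirroring the continuous modulus $\E^{t_1-t_2}_{\lambda,u_0,A}$ of Theorem~\ref{determ_exis}(iii). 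I expect this balancing of the near- and far-field singular sums to be the main obstacle, since obtaining the sharp power of $\Delta t$, and hence the correct exponent $1/\lambda$, demands careful bookkeeping of the discrete kernel weights and their dependence on $\Delta x^{\lambda}$.

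With spatial compactness from the uniform $BV$ bound (iii) and temporal equicontinuity from the time-regularity estimate, I would then invoke a Kolmogorov--Riesz compactness argument (equivalently, Helly's selection theorem in space on a countable dense set of times, upgraded by Arzel\`a--Ascoli using the time modulus) to extract a subsequence of $u_\Delta$ converging in $C([0,T];L^1(\R))$ to a limit $u$. The uniform a priori bounds pass to the limit, placing $u$ in $L^{\infty}(Q_T)\cap C([0,T];L^1(\R))\cap L^{\infty}([0,T];BV(\R))$, and the numerical bounds (i)--(iii) are inherited by $u_\Delta$ itself.

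Finally I would identify the limit. Monotone schemes satisfy discrete cell entropy inequalities for the Kru\v{z}kov pairs $(\eta_k,q_k)$; multiplying by a nonnegative test function, summing by parts, and again using the near/far-field splitting to treat the nonlocal term, I would pass to the limit along the convergent subsequence and verify that $u$ satisfies the entropy inequality of the entropy-solution definition for all $k$ and all $r>0$. By the uniqueness statement of Theorem~\ref{determ_exis}(i), $u$ is the unique entropy solution of \eqref{fdcd_1d}; consequently every subsequence admits a further subsequence converging to this same limit, so the full family $u_\Delta$ converges, completing the proof.
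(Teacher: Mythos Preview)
Your proposal is correct and outlines precisely the standard monotone-scheme/compactness argument. Note, however, that the paper does not prove this theorem at all: its ``proof'' consists solely of references to Cifani--Jakobsen \cite{cifani2011entropy,cifani2014numerical}, where the a priori estimates, the $\lambda$-dependent time regularity, and the convergence to the entropy solution are established. Your four-stage sketch (monotonicity $\Rightarrow$ $L^\infty$/$L^1$/TVD bounds; near/far-field splitting of the discrete nonlocal operator with optimization of the cutoff radius $r$ to obtain the three regimes of $\sigma$; Kolmogorov--Riesz compactness; passage to the limit in discrete cell entropy inequalities) is exactly the route taken in those references, so there is nothing to correct---you have simply supplied the argument the paper delegates to the literature.
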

\begin{proof}
The convergence of the approximate solution to the entropy solution of \eqref{fdcd_1d} is proved in [\cite{cifani2011entropy}, Theorem 4.4 and Lemma 4.6]. For the estimates on approximate solution, one can refer to [\cite{cifani2014numerical}, Lemma 5.1] and the time regularity estimates can be found in [\cite{cifani2011entropy}, Lemma 4.5] or [\cite{cifani2014numerical},  Lemma 5.3].
\end{proof}

For the convenience of further analysis, we replace the CFL conditions \eqref{CFL_1} and \eqref{CFL_imp_1} with the following  simplified condition
\begin{align}\label{CFL_2}
C\frac{\Delta t}{\Delta x^{1\vee\lambda}} \leq 1, \qquad \text{for }\lambda\in(0,2),
\end{align}
where the constant $C$ may depend on $\lambda$, and $\vee$ is defined by $a\vee b=\max\{a,b\}$.
\begin{thm}\label{conv_rate}(Convergence rate for approximate solution)\\
Let $u_0\in L^{\infty}(\R)\cap L^1(\R)\cap BV(\R)$ and $u_{\Delta}$ be the approximate solution obtained either by the explicit scheme \eqref{eqn:fdm_explicit} or by the explicit-implicit scheme \eqref{eqn:fdm_impexp} under the CFL condition \eqref{CFL_2}. 
\begin{enumerate}[(a)]
\item The following estimate holds with the scheme \eqref{eqn:fdm_explicit} for all $\lambda\in (0,2)$,
\begin{align}\label{conv_estimate}
\|u(T,\cdot) - u_{\Delta}(T,\cdot)\|_{L^1(\R)} \leq C_T \sigma^{EX}_{\lambda}(\Delta x)
\end{align}
for some constant $C_T$ independent of $\Delta x$, where the function $\sigma^{EX}_{\lambda}$ is given by
\begin{equation}\label{sigma_defn_explicit}
\sigma^{EX}_{\lambda}(r) = 
\begin{cases}
r^{\frac{1}{2}}, & \quad \text{if } \lambda\in (0,\frac{2}{3}],\\
r^{\frac{2-\lambda}{2+\lambda}}, & \quad \text{if }\lambda\in (\frac{2}{3},1)\cap (1,2).
\end{cases}
\end{equation}
For $\lambda = 1$, under the stronger CFL condition 
\begin{align*}
C\frac{\Delta t}{\Delta x^{\alpha}} \leq 1, \qquad \alpha\in (1,2),
\end{align*}
the following estimate holds:
\begin{align*}
\|u(T,\cdot) - u_{\Delta}(T,\cdot)\|_{L^1(\R)} \leq C_T \sigma^{EX}_{\alpha}(\Delta x).
\end{align*}
\item With the explicit-implicit scheme \eqref{eqn:fdm_impexp} for $\lambda\in (0,2)$, we have
\begin{align}\label{conv_estimate_implicit}
\|u(T,\cdot) - u_{\Delta}(T,\cdot)\|_{L^1(\R)} \leq C_T \sigma^{EI}_{\lambda}(\Delta x),
\end{align}
where the function $\sigma^{EI}_{\lambda}$ is given by
\begin{equation}\label{sigma_defn_implicit}
\sigma^{EI}_{\lambda}(r) = 
\begin{cases}
r^{\frac{1}{2}}, & \quad \text{if } \lambda\in (0,1),\\
r^{\frac{1}{2}}|\log r|, & \quad \text{if }\lambda = 1,\\
r^{\frac{2-\lambda}{2}}, & \quad \text{if }\lambda\in (1,2).
\end{cases}
\end{equation}
\end{enumerate}
\end{thm}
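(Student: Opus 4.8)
The plan is to establish both rates by a Kuznetsov-type doubling of variables that compares the unique entropy solution $u$ of \eqref{fdcd_1d} with the piecewise-constant numerical solution $u_\Delta$ of \eqref{num_sol}. The essential first ingredient is a discrete entropy inequality: since the schemes \eqref{eqn:fdm_explicit} and \eqref{eqn:fdm_impexp} are monotone under \eqref{CFL_2} (respectively \eqref{CFL_imp_1}), inserting the Kru\v{z}kov pair $(\eta_k,q_k)$ and exploiting the monotonicity produces a cell entropy inequality for $U_i^n$ modulo consistency defects from the convective flux $F$ and from the discrete fractional operator $\sum_{j\neq 0}G_j(A(\cdot_{i+j})-A(\cdot_i))$. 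I would first record this inequality together with the uniform $L^\infty$, $L^1$ and $BV$ bounds and the time-modulus $\sigma$ of Theorem \ref{conv_esti}, since these are precisely the quantities that will control the regularization terms.

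Next I would form the Kuznetsov functional using a product mollifier $\omega_\nu(x-y)\,\omega_{\nu_0}(s-t)$ in space and time, feed $u(t,x)$ into the continuous entropy inequality and $u_\Delta(s,y)$ into the discrete one, and add them. The symmetric Kru\v{z}kov terms cancel up to the diagonal, isolating on one side the target $\|u(T,\cdot)-u_\Delta(T,\cdot)\|_{L^1(\R)}$ and the initial mismatch $\|u_0-u_\Delta(0,\cdot)\|_{L^1(\R)}$, and on the other side three families of error terms: (i) a \emph{regularization error}, bounded by $C\nu|u_0|_{BV(\R)}$ in space and by $C\sigma(\nu_0)$ in time; (ii) a \emph{convective consistency error}, which by the Lipschitz continuity and monotonicity of $F$ together with a weak-$BV$ estimate contributes a term of order $\Delta x/\nu$; and (iii) a \emph{fractional-diffusion consistency error} measuring the gap between $g[A(u)]$ and its mesh discretization. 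For (iii) the splitting $g=g_r+g^r$ of the excerpt is decisive: the singular near-field $g_r$, being a principal value of an odd kernel, contributes after pairing with the mollifier a factor $\int_{|z|\le r}|z|^{1-\lambda}\,dz\sim r^{2-\lambda}$ times a negative power of $\nu$, while the regular far-field $g^r$ yields a quadrature error of the discrete weights $G_j$ against $|z|^{-1-\lambda}$ that is absorbed by the $BV$ bound.

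The stated rates then emerge by optimizing the free parameters $\nu,\nu_0,r$. Balancing the spatial regularization $\nu$ against the convective term $\Delta x/\nu$ alone yields the baseline $\sqrt{\Delta x}$, which is the surviving rate whenever diffusion is subdominant, namely $\lambda\in(0,\frac{2}{3}]$ for the explicit scheme and $\lambda\in(0,1)$ for the explicit-implicit scheme; note that the crossovers $\lambda=\frac{2}{3}$ and $\lambda=1$ are exactly where $(2-\lambda)/(2+\lambda)$ and $(2-\lambda)/2$ equal $\frac{1}{2}$. In the diffusion-dominated range the near-field term $\sim r^{2-\lambda}$ becomes binding, and carrying the $\lambda$-dependent CFL relation of the relevant scheme through the optimization produces the two exponent families: the restrictive explicit constraint $\Delta t\sim\Delta x^{1\vee\lambda}$ of \eqref{CFL_2} gives the slower exponent $(2-\lambda)/(2+\lambda)$, whereas the unconditional-in-diffusion step $\Delta t\sim\Delta x$ of \eqref{CFL_imp_1} decouples the time contribution and gives the faster $(2-\lambda)/2$. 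The borderline $\lambda=1$ is genuinely special: the time-modulus of Theorem \ref{conv_esti} degenerates to $\sigma(r)=C|r\ln r|$, so for the explicit-implicit scheme a logarithm survives the optimization and gives $\sqrt{\Delta x}\,|\log\Delta x|$, while for the explicit scheme one must first strengthen the CFL to $C\Delta t/\Delta x^\alpha\le1$ with $\alpha\in(1,2)$, which in effect replaces $\lambda$ by $\alpha$ and returns the rate $\sigma^{EX}_\alpha(\Delta x)$.

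The hard part will be step (iii): obtaining a \emph{sharp} fractional-diffusion consistency estimate and tracking its joint dependence on the cutoff $r$ and the regularization scale $\nu$ through the singular principal-value kernel, because it is this estimate --- and its coupling to the $\lambda$-dependent CFL constraint --- that fixes the crossover thresholds and the precise exponents. The case $\lambda=1$ demands extra care throughout, since the logarithmic modulus of continuity forces a separate optimization and, in the explicit case, the strengthened CFL condition.
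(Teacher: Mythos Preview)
The paper does not actually prove this theorem: its entire proof consists of the sentence ``For a detailed proof we refer to [\cite{cifani2014numerical}, Theorem 6.3], and [\cite{cifani2014numerical}, Corollary 6.4] for $\lambda=1$.'' Your proposal, by contrast, is a faithful outline of the Kuznetsov doubling-of-variables argument that Cifani and Jakobsen carry out in that reference: discrete entropy inequality from monotonicity, the mollified Kru\v{z}kov functional, the three error contributions (regularization, convective consistency, fractional-diffusion consistency via the $g_r$/$g^r$ splitting), and the final optimization over $\nu,\nu_0,r$. So you are supplying what the paper merely imports.

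One point in your heuristic deserves sharpening. You attribute the gap between the explicit rate $(2-\lambda)/(2+\lambda)$ and the explicit-implicit rate $(2-\lambda)/2$ solely to the more restrictive CFL relation $\Delta t\sim\Delta x^{1\vee\lambda}$. But for $\lambda\in(\tfrac{2}{3},1)$ both schemes run under the \emph{same} CFL $\Delta t\sim\Delta x$, yet the explicit scheme already yields the inferior exponent $(2-\lambda)/(2+\lambda)<\tfrac{1}{2}$. The actual mechanism in \cite{cifani2014numerical} is that the explicit time-discretization of the nonlocal term generates an additional consistency defect in step (iii) of your outline --- roughly a term of order $\Delta t\cdot(\text{nonlocal truncation})$ --- which the implicit treatment absorbs. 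Once you track this extra term through the optimization, the crossover at $\lambda=\tfrac{2}{3}$ and the exponent $(2-\lambda)/(2+\lambda)$ fall out; the CFL restriction for $\lambda>1$ then compounds the effect but is not its sole cause. This does not invalidate your strategy, only the informal explanation of where the two rate families diverge.
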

\begin{proof}
For a detailed proof we refer to [\cite{cifani2014numerical}, Theorem 6.3],  and [\cite{cifani2014numerical}, Corollary 6.4] for $\lambda =1$.
\end{proof}
The CFL condition \eqref{CFL_2} is instrumental to get the convergence rates. In order to simplify the discussion of the main ideas used in this paper, we ignore the critical case $\lambda = 1$ for the remainder of this paper. However, our ideas can be applied to the case $\lambda = 1$ at the expense of presumably long computations. Furthermore, we consider the non-local problem \eqref{fdcd_1d} in a bounded domain $I\subset\R$ and take into account $I$-periodic solution in $u$. It is straightforward to observe that all the above presented estimates in Theorem \ref{conv_esti} and Theorem \ref{conv_rate} also hold for $x\in I$.
\begin{rem}
As a consequence of the Theorem \ref{conv_rate}, we have the following estimate: 
\begin{align}
\|u(t^n,\cdot) - u_{\Delta}(t^n,\cdot)\|_{L^1(\R)} \leq \|u_{\Delta}(0,\cdot) - u_0\|_{L^1(\R)} + C_T \sigma_{\lambda}(\Delta x),
\end{align}
where $\sigma_{\lambda}(r)$ is defined by
\begin{align}\label{sigma_defn}
\sigma_{\lambda}(r) = 
\begin{cases}
\sigma^{EX}_{\lambda}(r), & \text{ for explicit scheme \eqref{eqn:fdm_explicit}}\\
\sigma^{EI}_{\lambda}(r), & \text{ for explicit-implicit scheme \eqref{eqn:fdm_impexp}}.
\end{cases}
\end{align}
\end{rem}
Moreover, we have the following result with a simple application of H{\"o}lder's inequality:
\begin{cor}\label{rate_conv_p}($L^2$-estimates)\\
Let the assumptions of Theorem \ref{conv_rate} hold. Then we have the following estimate for the rate of convergence of the schemes \eqref{eqn:fdm_explicit} and \eqref{eqn:fdm_impexp} in $L^2(I)$:
\begin{align}
\|u(t^n,\cdot) - u_{\Delta}(t^n,\cdot)\|_{L^2(I)} \leq C \big(\|u_{\Delta}(0,\cdot) - u_0\|^{1/2}_{L^1(I)} 
+ C_T^{1/2} (\sigma_{\lambda}(\Delta x))^{1/2} \big),
\end{align}
where the constant $C$ does not depend on $\Delta x$.
\end{cor}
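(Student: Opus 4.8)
The plan is to deduce the $L^2$ bound from the $L^1$ convergence rate already recorded in the Remark following Theorem~\ref{conv_rate}, together with the uniform $L^\infty$ bounds, by a standard interpolation. First I would record the elementary consequence of Hölder's inequality that for any $w\in L^1(I)\cap L^\infty(I)$,
\begin{align*}
\|w\|_{L^2(I)}^2 = \int_I |w|\,|w|\,dx \le \|w\|_{L^\infty(I)}\,\|w\|_{L^1(I)},
\end{align*}
so that $\|w\|_{L^2(I)} \le \|w\|_{L^\infty(I)}^{1/2}\,\|w\|_{L^1(I)}^{1/2}$. I would then apply this with $w = u(t^n,\cdot) - u_{\Delta}(t^n,\cdot)$.

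Next I would control the two factors separately. For the $L^\infty$ factor, estimate~(i) of Theorem~\ref{conv_esti} gives $\|u_{\Delta}(t^n,\cdot)\|_{L^\infty(\R)} \le \|u_0\|_{L^\infty(\R)}$, while the limit $u$ inherits the same bound from Theorem~\ref{determ_exis} (equivalently, from the convergence asserted in Theorem~\ref{conv_esti}); hence $\|u(t^n,\cdot) - u_{\Delta}(t^n,\cdot)\|_{L^\infty(I)} \le 2\|u_0\|_{L^\infty(\R)}$, a constant independent of $\Delta x$, which is precisely the quantity absorbed into $C$. For the $L^1$ factor, I would insert the rate from the Remark following Theorem~\ref{conv_rate}, namely
\begin{align*}
\|u(t^n,\cdot) - u_{\Delta}(t^n,\cdot)\|_{L^1(\R)} \le \|u_{\Delta}(0,\cdot) - u_0\|_{L^1(\R)} + C_T\,\sigma_{\lambda}(\Delta x).
\end{align*}

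Finally I would combine the two factors and invoke the subadditivity of the square root, $(a+b)^{1/2}\le a^{1/2}+b^{1/2}$ for $a,b\ge 0$, to split
\begin{align*}
\big(\|u_{\Delta}(0,\cdot)-u_0\|_{L^1(I)} + C_T\,\sigma_{\lambda}(\Delta x)\big)^{1/2}
\le \|u_{\Delta}(0,\cdot)-u_0\|_{L^1(I)}^{1/2} + C_T^{1/2}\,(\sigma_{\lambda}(\Delta x))^{1/2},
\end{align*}
which yields exactly the claimed form with $C = (2\|u_0\|_{L^\infty(\R)})^{1/2}$. I expect no genuine obstacle here: the whole content is the interpolation embedding of $L^2$ between $L^1$ and $L^\infty$ combined with the previously established rate, and the only points needing a modicum of care are checking that the $L^\infty$ bound holds \emph{simultaneously} for $u$ and $u_{\Delta}$ so that the prefactor is truly $\Delta x$-independent, and the correct application of square-root subadditivity.
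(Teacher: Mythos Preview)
Your argument is correct and matches the paper's approach: the paper states only that the corollary follows ``with a simple application of H{\"o}lder's inequality,'' and the interpolation $\|w\|_{L^2(I)} \le \|w\|_{L^\infty(I)}^{1/2}\|w\|_{L^1(I)}^{1/2}$ you use is exactly that application, combined with the $L^1$ rate from the preceding Remark and the uniform $L^\infty$ bounds. The same interpolation step is made explicit later in the paper (see the estimate \eqref{discrete_estimate_1} and the bound on $T_2$ in the MC-FDM error analysis), confirming your reading.
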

In order to analyze the efficiency of the MC and MLMC methods, we need to estimate the computational work performed to compute the approximate solution with the FD-schemes in the deterministic case. In addition, we need to analyze how the computational work scales with respect to mesh refinement.
\subsection{Work bounds}
The computational work/cost can be obtained by evaluating the number of floating point operations performed during the execution of the algorithm. We compute the work estimate for the explicit scheme \eqref{eqn:fdm_explicit} as well as for the explicit-implicit scheme \eqref{eqn:fdm_impexp}.
Since the actual numerical simulations are performed on bounded domains, the number of grid cells in one dimension scales as $1/\Delta x$. 
\subsubsection{Work estimate for explicit scheme \eqref{eqn:fdm_explicit}}
It can be easily observed that due to the non-local term, the number of operations per time-step scales quadratically with the number of cells in spatial domain for the explicit scheme. Since the scale for the spatial domain is $1/\Delta x$, the work for the explicit scheme \eqref{eqn:fdm_explicit} can be estimated by 
\begin{align*}
W_{\Delta}^{EX} \leq C \Delta t^{-1} \Delta x^{-2}.
\end{align*}
Incorporating the CFL condition \eqref{CFL_2}, we obtain the following work bound:
\begin{equation}
W_{\Delta}^{EX} = 
\begin{cases}
\mathcal{O}(\Delta x^{-3}), & \quad \text{if } \lambda \in (0,1),\\
\mathcal{O}(\Delta x^{-\lambda -2}), & \quad \text{if }\lambda \in (1,2).
\end{cases}
\end{equation}
\subsubsection{Work estimate for explicit-implicit scheme \eqref{eqn:fdm_impexp}}
The scheme \eqref{eqn:fdm_impexp} requires a non-linear solver for $\U^\npo$ in each time-step, which can be computationally expensive if solved exactly. Thus, we approximately solve the equation using a suitable iterative method. In particular, we consider the Newton iteration method. We continue to do the iteration until the residual is $\mathcal{O}(\Delta t\Delta x)$ since the mapping $U^n\mapsto U^{n+1}=:\Psi(U^n)$ ($\Psi$ can be easily obtained from \eqref{eqn:fdm_impexp}) is a contraction thanks to sufficiently small $\Delta t$ and CFL constant \eqref{CFL_imp_1}.

It is possible to show that the additional error introduced due to the finite termination of the iterative solver does not contribute to an increment in the overall error. To see this, let $\widetilde{u}^{n,0}$ denote the approximate solution at time $t=t^n$ obtained by solving \eqref{eqn:fdm_impexp} exactly in each time-step and $\widetilde{u}^{n,m}$ represents the approximation of \eqref{eqn:fdm_impexp} by Newton iteration in the first $m$ time-steps and afterwards exactly. Hence we have $\widetilde{u}^{n,n}= u_{\Delta}(t^n,\cdot)$ and by triangle inequality and $L^1$-contraction of the numerical scheme
\begin{align*}
\|u_{\Delta}(t^n,\cdot) - u(t^n,\cdot)\|_{L^1(I)} & = \big\|\sum_{m=0}^{n-1}(\widetilde{u}^{n,m+1} - \widetilde{u}^{n,m}) 
+ \widetilde{u}^{n,0} - u(t^n,\cdot) \big\|_{L^1(I)}\\
& \leq \sum_{m=0}^{n-1}\big\|\widetilde{u}^{n,m+1} - \tilde{u}^{n,m} \big\|_{L^1(I)} 
+ \|\widetilde{u}^{n,0} - u(t^n,\cdot)\|_{L^1(I)}\\
&{ \leq \sum_{m=0}^{n-1}\big\|\widetilde{u}^{n,m+1}-\widetilde{u}^{m+1,m+1}\big\|_{L^1(I)} + \sum_{m=0}^{n-1} \big\|\widetilde{u}^{m+1,m+1}- \widetilde{u}^{m+1,m} \big\|_{L^1(I)} }\\
& \ \ {+ \sum_{m=0}^{n-1} \big\|\widetilde{u}^{n,m}- \widetilde{u}^{m+1,m} \big\|_{L^1(I)} + C_T \sigma_{\lambda}^{EI}(\Delta x)} \\
& \leq \sum_{m=0}^{n-1}\big\|\widetilde{u}^{m+1,m+1} - \widetilde{u}^{m+1,m} \big\|_{L^1(I)} + C_T \sigma_{\lambda}^{EI}(\Delta x)\\
& \leq n\Delta t\Delta x + C_T \sigma_{\lambda}^{EI}(\Delta x) 
\leq t_n\Delta x + C_T \sigma_{\lambda}^{EI}(\Delta x) \leq C \sigma_{\lambda}^{EI}(\Delta x),
\end{align*} 
where we have used the estimates \eqref{conv_estimate_implicit}. Now we focus on the work estimation of \eqref{eqn:fdm_impexp}.

It is well-known that the Newton method converges (locally) quadratically provided the initial approximation is in the small neighbourhood of the fixpoint. It is sufficient to perform $\mathcal{O}(\log(\log(\Delta t^{-1}\Delta x^{-1}))$ iterations to achieve the error bound $C\Delta t\Delta x$ in one time-step. Incorporating the CFL condition \eqref{CFL_2}, we need to perform the following number of iterations $(N)$:
\begin{align*}
N = 
\begin{cases}
\mathcal{O}(\log(\log(\Delta x^{-2})), & \quad \text{if }\lambda\in (0,1),\\[1.5mm]
\mathcal{O}(\log(\log(\Delta x^{-\lambda-1})), & \quad \text{if }\lambda\in (1,2).
\end{cases}
\end{align*}
The presence of the non-local diffusion term leads to the inversion of a full-matrix in each Newton iteration, which corresponds to $\mathcal{O}(\Delta x^{-3})$ floating point operations. Thus, the work estimate for one full Newton solve is given by
\begin{align*}
\mathcal{O}(\Delta x^{-3}\log(\log(\Delta x^{-1})), & \quad \text{if }\lambda\in (0,1)\cup (1,2)
\end{align*}
since $\mathcal{O}(\log(\log(\Delta x^{-2}))) = \mathcal{O}(\log(\log(\Delta x^{-1}))) = \mathcal{O}(\log(\log(\Delta x^{-\lambda-1}))$. Finally, since there are $T \Delta t^{-1}$ time-steps, we obtain the following work estimate for the explicit-implicit scheme \eqref{eqn:fdm_impexp}
\begin{align}\label{work_implicit}
W^{EI}_{\Delta} =
\begin{cases}
\mathcal{O}(\Delta x^{-4}\log(\log(\Delta x^{-1})), & \quad \text{if }\lambda\in (0,1),\\[1.5mm]
\mathcal{O}(\Delta x^{-3-\lambda}\log(\log(\Delta x^{-1})), & \quad \text{if }\lambda\in (1,2).
\end{cases}
\end{align}
\subsection{Application to random data}
We are interested in the following scalar random degenerate convection diffusion equation:
\begin{equation}\label{rfdcd_1d}
\begin{cases}
\partial_t u(\omega;t,x) +\partial_x f(\omega; u(\omega;t,x)) = - (-\triangle)^{\ld/2} [A(\omega;u(\omega;t,\cdot))](x), & \quad t>0,~x\in\R, \omega\in \Omega,\\
u(\omega;0,x)= u_0(\omega;x),& \quad x\in\R, \omega\in \Omega.
\end{cases}
\end{equation}
In order to develop MC-FDMs, we need to combine MC sampling to the FDMs \eqref{eqn:fdm_explicit} and \eqref{eqn:fdm_impexp} with random input data. FDMs incorporating random input data will be instrumental to perform the convergence analysis of the MC-FDM/MLMC-FDM algorithms.

Given a draw $(u_0(\omega;\cdot),f(\omega;\cdot),A(\omega;\cdot))$ of $(u_0,f,A)$, let $u_{\Delta}(\omega;t,x)$ define a family of grid function corresponding to the schemes \eqref{eqn:fdm_explicit} or \eqref{eqn:fdm_impexp} for \eqref{rfdcd_1d}. The following result consists of stability estimates and rate of convergence of approximate solutions in a random setup.
\begin{prop}\label{discrete_random_prop}
Let us assume that $(u_0,f,A)\in L^2(\Omega;E_R^1)$, where $E_R^1$ is given by
\begin{align*}
E_R^1 = (BV(\R)\cap L^1(\R)\cap L^{\infty}(\R)) \times W^{1,\infty}(\R;\R) \times W^{1,\infty}(\R;\R).
\end{align*}
Consider the finite difference schemes \eqref{eqn:fdm_explicit} and \eqref{eqn:fdm_impexp} for the approximations of the entropy solution of \eqref{rfdcd_1d} corresponding to the random data $(u_0,f,A)(\omega)$.

Then, the random grid functions $\Omega\ni \omega\mapsto u_{\Delta}(\omega;t,x)$ defined by \eqref{num_sol} satisfy the following stability bounds for every $0<\widetilde{t}<\infty$ and $0<\Delta x<1$:
\begin{align*}
\|u_{\Delta}(\cdot;\widetilde{t},\cdot)\|_{L^2(\Omega;L^{\infty}(I))} & \leq \|u_0\|_{L^2(\Omega;L^{\infty}(I))},\\
\|u_{\Delta}(\cdot;\widetilde{t},\cdot)\|_{L^2(\Omega;L^{1}(I))} & \leq \|u_0\|_{L^2(\Omega;L^{1}(I))}.
\end{align*} 
Furthermore, we have the consistency bound: there exist a positive constant $C_T$ such that
\begin{align}\label{discrete_estimate}
\|u(\cdot;\widetilde{t},\cdot) - u_{\Delta}(\cdot;\widetilde{t},\cdot)\|_{L^2(\Omega;L^1(I))} \leq \|u_0 - u_{\Delta}(\cdot;0,\cdot)\|_{L^2(\Omega;L^1(I))} + C_T \sigma_{\lambda}(\Delta x),
\end{align} 
where $\sigma_{\lambda}(\Delta x)$ is defined by \eqref{sigma_defn}.
\end{prop}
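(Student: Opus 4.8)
The plan is to lift the deterministic pathwise estimates of Theorem~\ref{conv_esti} and Theorem~\ref{conv_rate} to the random setting by squaring and integrating over the sample space $(\Omega,\F,\Pb)$. The starting observation is that, for $\Pb$-almost every $\omega$, the grid function $u_{\Delta}(\omega;\cdot,\cdot)$ is precisely the finite difference approximation produced by scheme \eqref{eqn:fdm_explicit} or \eqref{eqn:fdm_impexp} from the deterministic draw $(u_0(\omega;\cdot),f(\omega;\cdot),A(\omega;\cdot))$. By the assumptions \eqref{assum_1}--\eqref{assum_5} this draw satisfies $\Pb$-a.s.\ all hypotheses of Theorems~\ref{conv_esti} and \ref{conv_rate}; in particular, since the Lipschitz constants of $f(\omega;\cdot)$ and $A(\omega;\cdot)$ are uniformly bounded by $C_f$ and $C_A$, the relevant CFL condition holds with a single deterministic constant and the pathwise constant $C_T$ may be taken independent of $\omega$. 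Consequently every deterministic estimate below holds $\Pb$-a.s.

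For the two stability bounds I would invoke the pathwise $L^{\infty}$ and $L^1$ estimates (i)--(ii) of Theorem~\ref{conv_esti}, namely $\|u_{\Delta}(\omega;\widetilde{t},\cdot)\|_{L^{\infty}(I)}\leq\|u_0(\omega;\cdot)\|_{L^{\infty}(I)}$ and $\|u_{\Delta}(\omega;\widetilde{t},\cdot)\|_{L^{1}(I)}\leq\|u_0(\omega;\cdot)\|_{L^{1}(I)}$, both valid $\Pb$-a.s. Squaring each inequality, integrating over $\Omega$ against $\Pb(d\omega)$, and taking square roots yields exactly the asserted $L^2(\Omega;L^{\infty}(I))$ and $L^2(\Omega;L^1(I))$ bounds. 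Finiteness of the right-hand sides is guaranteed by the almost-sure bound \eqref{assum_1} together with the integrability hypothesis \eqref{assum_5}.

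For the consistency bound \eqref{discrete_estimate}, the key input is the pathwise convergence-rate estimate recorded in the remark following Theorem~\ref{conv_rate}, which gives, $\Pb$-a.s., $\|u(\omega;\widetilde{t},\cdot)-u_{\Delta}(\omega;\widetilde{t},\cdot)\|_{L^1(I)}\leq\|u_0(\omega;\cdot)-u_{\Delta}(\omega;0,\cdot)\|_{L^1(I)}+C_T\,\sigma_{\lambda}(\Delta x)$, the pathwise exact solution being identified through Theorem~\ref{random_entropy}. Taking the $L^2(\Omega)$-norm of both sides and applying Minkowski's inequality (the triangle inequality in $L^2(\Omega)$), and noting that the deterministic term $C_T\,\sigma_{\lambda}(\Delta x)$ has $L^2(\Omega)$-norm equal to itself since $\Pb(\Omega)=1$, produces \eqref{discrete_estimate} at once.

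The main obstacle is not the estimates, which descend routinely from the deterministic theory, but the strong measurability of the discrete solution map $\Omega\ni\omega\mapsto u_{\Delta}(\omega;\widetilde{t},\cdot)$, required for the Bochner integrals above to be meaningful. For the explicit scheme \eqref{eqn:fdm_explicit} this is straightforward: the initial cell-average projection and each time update are continuous (indeed algebraic) functions of the data, so measurability is inherited from the measurability of $\omega\mapsto(u_0,f,A)(\omega)$ asserted in the definition of random data. For the explicit-implicit scheme \eqref{eqn:fdm_impexp} one must additionally verify that the implicit update is measurable in $\omega$; this follows because, under \eqref{CFL_imp_1}, the update is a contraction whose fixed point is, for each $\omega$, the limit of the fixed-point iterates, each measurable in $\omega$, and a pointwise limit of measurable maps is measurable. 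Once strong measurability is established, the integrations over $\Omega$ are legitimate and the proposition follows.
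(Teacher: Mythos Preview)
Your proposal is correct and follows exactly the approach the paper indicates: the paper does not give a detailed proof but simply remarks that the estimates follow ``by the similar arguments in \cite{mishra2016numerical}'' (see also \cite{mishra2012sparse, mishra2013multi, zhang2014monte}), which is precisely the pathwise-lift-then-integrate argument you outline. Your additional care about strong measurability of the explicit-implicit update via contraction iterates is a welcome detail that the paper leaves implicit.
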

Following the similar arguments in \cite{mishra2016numerical}, we can obtain the estimates in Proposition \ref{discrete_random_prop}. One can also refer to \cite{mishra2012sparse, mishra2013multi, zhang2014monte} for further details.
\begin{rem}($L^2$-estimates)\\
Under the assumptions of Proposition \ref{discrete_random_prop} and using H{\"o}lder's inequality, we have the following rate of convergence in $L^2(I)$,
\begin{align}\label{discrete_estimate_1}
\|u(\cdot;\widetilde{t},\cdot) - u_{\Delta}(\cdot;\widetilde{t},\cdot)\|_{L^2(\Omega;L^2(I))} 
\leq C \mathcal{M}^{1/2}\|u_0 - u_{\Delta}(\cdot;0,\cdot)\|^{1/2}_{L^2(\Omega;L^1(I))} 
+ C\mathcal{M}^{1/2} (C_T\sigma_{\lambda}(\Delta x))^{1/2},
\end{align}
where $\mathcal{M}$ is defined by \eqref{assum_1} and $\sigma_{\lambda}(\Delta x)$ is defined by \eqref{sigma_defn}.
\end{rem}
\begin{rem}
In a similar way to Proposition \ref{discrete_random_prop}, we can also obtain the following estimate:
\begin{align}\label{discrete_estimate_2}
\|u(\cdot;\widetilde{t},\cdot) - u_{\Delta}(\cdot;\widetilde{t},\cdot)\|_{L^1(\Omega;L^2(I))} 
\leq C \mathcal{M}^{1/2}\|u_0 - u_{\Delta}(\cdot;0,\cdot)\|^{1/2}_{L^1(\Omega;L^1(I))} 
+ C\mathcal{M}^{1/2} (C_T\sigma_{\lambda}(\Delta x))^{1/2},
\end{align}
since the assumption \eqref{assum_5} holds.
\end{rem}


\section{Multilevel Monte Carlo Finite Difference Method}
\label{mlmc-mc}
Our aim is to compute certain properties such as expectation, variance and higher moments of solution of \eqref{rfdcd_1d}. In order to do this, we have to discretize the stochastic domain $\omega\in \Omega$ as well as physical domain $(t,x)\in Q_T$. There are several approaches that one can follow. A popular approach is the use of Stochastic Galerkin methods with generalized polynomial chaos (see \cite{chen2005uncertainty, lin2004UQ, wan2006pc,burger2014uq, poette2009uq,  tryoen2012uq} and references therein). However, these methods are highly intrusive, requiring the restructuring of existing deterministic numerical codes. Stochastic collocation methods \cite{xiu2005high} provide an alternative class of methods which are non-intrusive. Both stochastic Galerkin and stochastic collocation methods can suffer from deterioration in performance due to loss of regularity of the solution with respect to the stochastic variable. We focus on yet another class of methods based on statistical sampling methods, to quantify the uncertainty in numerical solutions. In particular, we consider Monte Carlo (MC) sampling and multi-level Monte Carlo (MLMC).  These methods are non-intrusive and easy to parallelize.
\subsection{Monte Carlo Method}
Let us assume that for $\mathbb{P}$-a.s. $\omega$ the data $(u_0(\omega;\cdot), f(\omega;\cdot), A(\omega;\cdot))\in E_R^1$, and the assumptions \eqref{assum_1}-\eqref{assum_5} hold. We wish to statistically estimate $\Ex[u]$, which is the expectation (or first moment) of $u$. The MC approximation of $\Ex[u]$ is defined as follows:
\begin{defi}
Given $M$ independent, identically distributed (i.i.d.) samples $(\widehat{u}_0^i,\widehat{f}^i,\widehat{A}^i),i=1,2,...,M$, of initial data, flux function and diffusion operator, the MC estimate $\Ex[u(\cdot;t,\cdot)]$ at time $t$ is given by
\begin{align}\label{MC_estimate}
E_M [u(t,\cdot)]: = \frac{1}{M}\sum_{i=1}^M \widehat{u}^i(t,\cdot),
\end{align}
where $\widehat{u}^i(t,\cdot)$ corresponds to the unique entropy solution for the $i$-th data sample.
\end{defi}
\begin{lem}
If the samples $\{(\widehat{u}_0^i,\widehat{f}^i,\widehat{A}^i),i=1,2,...,M\}$ are i.i.d., then
$\Ex\big[\|E_M[u(t,\cdot)]\|_{L^2(\R)}\big]$ is finite.
\end{lem}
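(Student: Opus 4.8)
The plan is to reduce the finiteness of $\Ex\big[\|E_M[u(t,\cdot)]\|_{L^2(\R)}\big]$ to the finiteness of the single-sample quantity $\Ex\big[\|u(t,\cdot)\|_{L^2(\R)}\big]$, and then to control the latter by the a priori $L^2$-stability estimate \eqref{estim_1}. First I would apply the triangle inequality in $L^2(\R)$ directly to the definition \eqref{MC_estimate}, which gives $\Pb$-a.s.
\begin{align*}
\|E_M[u(t,\cdot)]\|_{L^2(\R)} \leq \frac{1}{M}\sum_{i=1}^M \|\widehat{u}^i(t,\cdot)\|_{L^2(\R)}.
\end{align*}
Taking expectations and using linearity, the problem reduces to bounding each term $\Ex\big[\|\widehat{u}^i(t,\cdot)\|_{L^2(\R)}\big]$.

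Second, since the samples $(\widehat{u}_0^i,\widehat{f}^i,\widehat{A}^i)$ are i.i.d.\ copies of $(u_0,f,A)$ and the data-to-solution map $S(t)$ from \eqref{datatosol} is deterministic, each $\widehat{u}^i(t,\cdot)=S(t)(\widehat{u}_0^i,\widehat{f}^i,\widehat{A}^i)$ has the same law as $u(t,\cdot)$ in $L^2(\R)$. Consequently $\Ex\big[\|\widehat{u}^i(t,\cdot)\|_{L^2(\R)}\big]=\Ex\big[\|u(t,\cdot)\|_{L^2(\R)}\big]$ for every $i$, and the averaged sum collapses to
\begin{align*}
\Ex\big[\|E_M[u(t,\cdot)]\|_{L^2(\R)}\big] \leq \Ex\big[\|u(t,\cdot)\|_{L^2(\R)}\big].
\end{align*}

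Third, to establish finiteness of the right-hand side I would pass from $L^1(\Omega)$ to $L^2(\Omega)$ by Jensen's (equivalently Cauchy--Schwarz) inequality on the probability space, obtaining
\begin{align*}
\Ex\big[\|u(t,\cdot)\|_{L^2(\R)}\big] \leq \Big(\Ex\big[\|u(t,\cdot)\|^2_{L^2(\R)}\big]\Big)^{1/2} = \|u(t,\cdot)\|_{L^2(\Omega;L^2(\R))} \leq \|u\|_{L^2(\Omega;C([0,T];L^2(\R)))}.
\end{align*}
Invoking the a priori estimate \eqref{estim_1} bounds the last quantity by $\|u_0\|_{L^2(\Omega;L^2(\R))}$, which is finite by the integrability assumption \eqref{assum_5} (with $d=1$). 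Chaining these inequalities yields the claim.

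I do not expect a serious obstacle: the argument is a routine combination of the triangle inequality, the identical distribution of the samples, and the stability estimate from Theorem \ref{random_entropy}. The only points demanding a little care are the measurability of $\omega\mapsto\|u(\omega;t,\cdot)\|_{L^2(\R)}$, so that all expectations are well defined---this follows from the strong measurability of $\omega\mapsto u(\omega;t,\cdot)$ established in the proof of Theorem \ref{random_entropy} together with continuity of the norm---and the observation that the deterministic nature of $S(t)$ is precisely what guarantees that each sample $\widehat{u}^i(t,\cdot)$ is genuinely identically distributed to $u(t,\cdot)$ in $L^2(\R)$.
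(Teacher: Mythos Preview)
Your proof is correct and follows essentially the same route as the paper: triangle inequality on $E_M$, identical distribution of the samples, and the $L^2$-stability of the solution operator. The only cosmetic difference is the order of operations: the paper applies the pathwise bound \eqref{p-estimate} \emph{before} taking expectation, obtaining directly $\Ex\big[\|E_M[u(t,\cdot)]\|_{L^2(\R)}\big]\le \|u_0\|_{L^1(\Omega;L^2(\R))}$, whereas you first take expectation and then pass through Jensen and \eqref{estim_1}, landing on the (slightly larger but still finite) bound $\|u_0\|_{L^2(\Omega;L^2(\R))}$.
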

\begin{proof}
Since we observe that
\begin{align*}
\widehat{u}^i (t,\cdot) = S(t)(\widehat{u}_0^i,\widehat{f}^i,\widehat{A}^i),
\end{align*}
for every $M$ and for every $0<t<\infty$,
\begin{align*}
\|E_M[u(\omega; t,\cdot)]\|_{L^2(\R)} & = \Big\|\frac{1}{M}\sum_{i=1}^M S(t)(\widehat{u}_0^i,\widehat{f}^i,\widehat{A}^i)\Big\|_{L^2(\R)}\\
& \leq \frac{1}{M}\sum_{i=1}^M \|S(t)(\widehat{u}_0^i,\widehat{f}^i,\widehat{A}^i)(\omega)\|_{L^2(\R)}
 \leq \frac{1}{M} \sum_{i=1}^M \|\widehat{u}_0^i(\omega;\cdot)\|_{L^2(\R)},
\end{align*}
where we have taken into account the estimate \eqref{p-estimate}. As a consequence, we have
\begin{align*}
\Ex\big[\|E_M[u(\omega;t,\cdot)]\|_{L^2(\R)}\big] \leq \Ex\Big[\frac{1}{M}\sum_{i=1}^M \|\widehat{u}_0^i(\omega;\cdot)\|_{L^2(\R)} \Big]
= \Ex [\|u_0\|_{L^2(\R)}] = \|u_0\|_{L^1(\Omega;L^2(\R))} <\infty.
\end{align*}
\end{proof}
\begin{thm}\label{main_estimate}
Let us consider the equation \eqref{rfdcd_1d} in which the random variable $(u_0,f,A)(\omega)$ satisfies the assumption \eqref{assum_1}-\eqref{assum_5}. Moreover, assume that $u_0\in L^2(\Omega;L^2(\R))$.
Then the MC approximations $E_M[u(t,\cdot)]$ defined in \eqref{MC_estimate} converges in $L^2(\Omega;L^2(\R))$ as $M\rightarrow\infty$, to $ \Ex[u(t,\cdot)]$. In addition, for any $M\in\mathbb{N}$, $0<t<\infty$, there holds the error bound
\begin{equation}\label{MC_error_bnd}
\|\Ex[u(t,\cdot)] - E_M[u(t,\cdot)]\|_{L^2(\Omega;L^{2}(\R))} \leq C M^{-1/2}\|u_0\|_{L^2(\Omega;L^2(\R))}.
\end{equation}
\end{thm}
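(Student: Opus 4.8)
The plan is to exploit the Hilbert space structure of $H := L^2(\R)$ and reduce the claim to the elementary variance identity for independent sample means, the residual being supplied by the energy estimate \eqref{estim_1}. First I would observe that the $M$ solutions $\widehat{u}^i(t,\cdot) = S(t)(\widehat{u}_0^i,\widehat{f}^i,\widehat{A}^i)$ are i.i.d.\ $H$-valued random variables, each distributed as $u(t,\cdot)$, and that the preceding lemma already guarantees they lie in $L^2(\Omega;H)$, so that the Bochner integral $\Ex[u(t,\cdot)]\in H$ is well defined. Writing $Y^i := \widehat{u}^i(t,\cdot) - \Ex[u(t,\cdot)]$, these are centered, i.i.d.\ elements of $L^2(\Omega;H)$, and $\Ex[u(t,\cdot)] - E_M[u(t,\cdot)] = -\tfrac{1}{M}\sum_{i=1}^M Y^i$.

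The key step is to compute the squared $L^2(\Omega;H)$-norm of this mean directly:
\begin{align*}
\Ex\Big[\big\|\tfrac{1}{M}\sum_{i=1}^M Y^i\big\|_H^2\Big]
= \frac{1}{M^2}\sum_{i,j=1}^M \Ex\big[\langle Y^i, Y^j\rangle_H\big].
\end{align*}
Independence and $\Ex[Y^i]=0$ force every off-diagonal term to vanish, since $\Ex[\langle Y^i,Y^j\rangle_H] = \langle \Ex[Y^i],\Ex[Y^j]\rangle_H = 0$ for $i\neq j$; only the $M$ diagonal terms survive, giving $M^{-1}\,\Ex[\|Y^1\|_H^2]$, i.e.\ $M^{-1}$ times the variance of $u(t,\cdot)$ in $H$. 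Bounding the variance by the second moment, $\Ex[\|u(t,\cdot)-\Ex[u(t,\cdot)]\|_H^2]\le \Ex[\|u(t,\cdot)\|_H^2]$, and invoking the energy estimate \eqref{estim_1} (equivalently \eqref{p-estimate} applied $\Pb$-a.s.\ and integrated in $\omega$), which yields $\Ex[\|u(t,\cdot)\|_{L^2(\R)}^2]\le \|u_0\|_{L^2(\Omega;L^2(\R))}^2$, I would arrive at
\begin{align*}
\|\Ex[u(t,\cdot)] - E_M[u(t,\cdot)]\|_{L^2(\Omega;L^2(\R))}^2 \le \frac{1}{M}\,\|u_0\|_{L^2(\Omega;L^2(\R))}^2.
\end{align*}
Taking square roots gives \eqref{MC_error_bnd} with $C=1$, and since the right-hand side tends to $0$ as $M\to\infty$, the asserted $L^2(\Omega;L^2(\R))$-convergence to $\Ex[u(t,\cdot)]$ follows immediately.

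The argument is essentially routine; the only points requiring care are measurability and integrability, namely that $u(t,\cdot)$ is a genuine $L^2(\Omega;H)$-valued random variable so that the Bochner integral defining $\Ex[u(t,\cdot)]$ exists and the orthogonality computation (interchanging $\Ex$ with the $H$-inner product and expanding the finite double sum) is justified. This is precisely what the $L^2$-bound in Theorem \ref{random_entropy} together with the finiteness established in the preceding lemma provide, so I expect no genuine obstacle. In effect the energy estimate \eqref{estim_1} is doing all the real work, converting the abstract $M^{-1}$ variance bound into the stated dependence on $\|u_0\|_{L^2(\Omega;L^2(\R))}$.
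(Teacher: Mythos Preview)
Your argument is correct and follows the same route as the paper: write the error as the empirical mean of the centered i.i.d.\ variables $Y^i=\widehat u^i(t,\cdot)-\Ex[u(t,\cdot)]$, reduce the squared $L^2(\Omega;L^2(\R))$-norm to $M^{-1}$ times the variance of $u(t,\cdot)$, bound the variance by the second moment, and close with the energy estimate \eqref{p-estimate}/\eqref{estim_1}. The only difference is cosmetic: where you expand $\|\sum Y^i\|_H^2$ via the $L^2$ inner product and kill the cross terms by independence (yielding $C=1$), the paper invokes a cited $L^p$-space inequality (a type-2 Banach space bound) to obtain the same $M^{-1}$ factor with an unspecified constant; your direct Hilbert-space computation is in fact the sharper and more elementary version of that step.
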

\begin{proof}
Consider the $M$ i.i.d. samples $\{(\widehat{u}_0^i,\widehat{f}^i,\widehat{A}^i),i=1,2,...,M\}$ and $\widehat{u}^i (t,\cdot) = S(t)(\widehat{u}_0^i,\widehat{f}^i,\widehat{A}^i)$, for $i=1,2,...,M$.
We have the following equality using the linearity of expectation
\begin{align*}
\|\Ex[u(t,\cdot)] - E_M[u(t,\cdot)]\|^2_{L^2(\Omega;L^{2}(\R))} & = \Ex\Big[\|\Ex[u(t,\cdot)] - E_M[u(t,\cdot)]\|^2_{L^{2}(\R)} \Big] \\
& = \Ex\Big[\Big\|\frac{1}{M}\sum_{i=1}^M \big(\Ex[u(t,\cdot)] - \widehat{u}^i(t,\cdot)\big) \Big\|^2_{L^2(\R)} \Big].
\end{align*}
For convenience, denote $\Ex[u(t,\cdot)] - \widehat{u}^i(t,\cdot)$ as $Y_i$. Observe that $Y_i$ are i.i.d. random variables with zero mean. Hence we have
\begin{align*}
\Ex\Big[\Big\|\frac{1}{M}\sum_{i=1}^M \big(\Ex[u(t,\cdot)] - \widehat{u}^i(t,\cdot)\big) \Big\|^2_{L^2(\R)} \Big]
= \Ex\Big[\Big\|\frac{1}{M}\sum_{i=1}^M Y_i \Big\|^2_{L^2(\R)}\Big].
\end{align*}
A crucial estimate for $L^p$ spaces [\cite{koley2013multilevel}, Corollary 2.5] gives us
\begin{align*}
\Ex\Big[\Big\|\frac{1}{M}\sum_{i=1}^M Y_i \Big\|^2_{L^2(\R)}\Big] \leq C M^{-1} \Ex\Big[\|\Ex[u(t,\cdot)] - u(t,\cdot)\|^2_{L^2(\R)} \Big]
\leq CM^{-1} \Ex\big[\|u(t,\cdot)\|^2_{L^2(\R)} \big].
\end{align*}
With the help of estimate \eqref{p-estimate} we obtain the required error bound
\begin{align*}
CM^{-1} \Ex\big[\|u(t,\cdot)\|^2_{L^2(\R)} \big] \leq CM^{-1}\Ex\big[\|u_0\|^2_{L^2(\R)} \big] = CM^{-1}\|u_0\|^2_{L^2(\Omega;L^2(\R))}.
\end{align*}
\end{proof}
\subsection{MC-FDM}
We combine the MC tools with the finite difference methods (referred to as MC-FDM) to approximate statistical quantities associated with the solution of the non-local equation \eqref{rfdcd_1d}. The main idea of MC-FDM is to generate independent samples of initial data, flux function and diffusion operator and then, for each sample to perform an FD simulation. For the remainder of this work, we restrict the discussions to the bounded interval $I$ instead of an unbounded domain.

\begin{defi}(Statistical estimates for random entropy solutions)\\
Consider the initial value problem \eqref{rfdcd_1d} with random data $(u_0,f,A)$ satisfying \eqref{assum_1}-\eqref{assum_5}. Given $M\in\mathbb{N}$, generate $M$ i.i.d. samples $\{(\widehat{u}_0^i,\widehat{f}^i,\widehat{A}^i),i=1,2,...,M\}$. Let $\{\widehat{u}^i(t,\cdot)\}_{i=1}^M$ denote the unique entropy solution of  \eqref{rfdcd_1d} corresponding to the data sample $(\widehat{u}_0^i,\widehat{f}^i,\widehat{A}^i)$. Then the MC-FDM approximation of $\Ex[u(t,.)]$ is defined as the statistical estimate of the ensemble $\{\widehat{u}^i_{\Delta}(t,\cdot)\}_{i=1}^M$ obtained from the FD approximation either by \eqref{eqn:fdm_explicit} or by \eqref{eqn:fdm_impexp} with data samples $\{(\widehat{u}_0^i,\widehat{f}^i,\widehat{A}^i),i=1,2,...,M\}$. More precisely, the first moment of the random solution $u(\omega;t,\cdot)$ at time $t>0$, is estimated as
\begin{align}\label{MC_estimate_defn}
\Ex[u(t,.)] \thickapprox E_M[u_{\Delta}(t,\cdot)]:=\frac{1}{M}\sum_{i=1}^M \widehat{u}^i_{\Delta}(t,\cdot).
\end{align}
\end{defi}
\subsubsection{Convergence analysis of MC-FDM}
We analyze the convergence of $E_M[u_{\Delta}(t,\cdot)]$ to the mean $\Ex[u(t,\cdot)]$. In order to do this, we have the following result concerning the error bound. It is important to note that the error with the MC-FDM approach is due to statistical/sampling error and discretization error.
\begin{thm}(MC-FDM Error bound)
Let us assume that $I$ is a bounded interval and the assumptions \eqref{assum_1}-\eqref{assum_5} hold. Furthermore, assume that
\begin{align*}
u_0\in L^2(\Omega; L^1(I)\cap BV(I)\cap L^{\infty}(I))
\end{align*}
and the deterministic FD schemes \eqref{eqn:fdm_explicit}-\eqref{eqn:fdm_impexp} converge at rate $\sigma_{\lambda}(\Delta x)$ in $L^{\infty}(0,T;L^1(I))$ for every $0<T<\infty$, where $\sigma_{\lambda}(\Delta x)$ is defined in \eqref{sigma_defn}.
Then, for every $M$, the MC estimate $E_M[u_{\Delta}(t,\cdot)]$ defined in \eqref{MC_estimate_defn} satisfies the following error bound:
\begin{equation}\label{MC_error}
\begin{split}
\|\Ex[u(t,\cdot)] -  E_M[u_{\Delta}(\omega;t,\cdot)]\|_{L^2(\Omega;L^2(I))} 
& \leq C \Big\{ \mathcal{M}^{1/2}\|u_0 - u_{\Delta}(\cdot;0,\cdot)\|_{L^2(\Omega;L^1(I))}\\
& + M^{-1/2}\|u_0\|_{L^2(\Omega;L^2(I))}
+ \mathcal{M}^{1/2}\sigma_{\lambda}(\Delta x)^{1/2} \Big\},
\end{split}\end{equation}
where $\mathcal{M}$ is defined in \eqref{assum_1} and the non-negative constant $C$ is independent of $M$ and $\Delta x$.
\end{thm}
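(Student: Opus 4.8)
The plan is to decompose the total error into a statistical (sampling) part and a deterministic discretization part via the triangle inequality, insert the intermediate quantity $E_M[u(t,\cdot)]$ (the MC estimate built from the \emph{exact} entropy solutions rather than their FD approximations), and bound each piece using results already established earlier in the excerpt. Concretely, I would write
\begin{align*}
\|\Ex[u(t,\cdot)] - E_M[u_{\Delta}(t,\cdot)]\|_{L^2(\Omega;L^2(I))}
& \leq \|\Ex[u(t,\cdot)] - E_M[u(t,\cdot)]\|_{L^2(\Omega;L^2(I))} \\
& \quad + \|E_M[u(t,\cdot)] - E_M[u_{\Delta}(t,\cdot)]\|_{L^2(\Omega;L^2(I))}.
\end{align*}

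For the first term I would invoke Theorem~\ref{main_estimate}, which gives the sampling error bound $C M^{-1/2}\|u_0\|_{L^2(\Omega;L^2(I))}$ directly, producing the middle contribution on the right-hand side of \eqref{MC_error}. This step is essentially a citation of the MC convergence rate and requires no new work beyond observing that the hypotheses of Theorem~\ref{main_estimate} are met.

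For the second term, the key observation is that $E_M[u(t,\cdot)] - E_M[u_{\Delta}(t,\cdot)] = \frac{1}{M}\sum_{i=1}^M (\widehat{u}^i(t,\cdot) - \widehat{u}^i_{\Delta}(t,\cdot))$ by linearity of the estimator. I would then pass the $L^2(I)$-norm inside the average via the triangle inequality and use that the samples are i.i.d., so that after taking the $L^2(\Omega)$-expectation each summand contributes the same deterministic-per-sample discretization error, yielding a bound of the form $\|u(\cdot;t,\cdot) - u_{\Delta}(\cdot;t,\cdot)\|_{L^2(\Omega;L^2(I))}$. This is exactly the quantity controlled by the $L^2$-estimate \eqref{discrete_estimate_1} from the Remark following Proposition~\ref{discrete_random_prop}, which furnishes both the $\mathcal{M}^{1/2}\|u_0 - u_{\Delta}(\cdot;0,\cdot)\|_{L^2(\Omega;L^1(I))}$ term (note \eqref{discrete_estimate_1} has this to the power $1/2$, so I would absorb the initial-data projection error appropriately, possibly using $\Delta x < 1$ to control the square root) and the $\mathcal{M}^{1/2}\sigma_{\lambda}(\Delta x)^{1/2}$ term.

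The main obstacle I anticipate is bookkeeping the mismatch between the norm powers: estimate \eqref{discrete_estimate_1} delivers the discretization contribution as a sum of \emph{square-root} terms $\|u_0 - u_{\Delta}(\cdot;0,\cdot)\|^{1/2}$ and $(C_T\sigma_{\lambda}(\Delta x))^{1/2}$, whereas the target \eqref{MC_error} states the projection error linearly but keeps the rate term as $\sigma_{\lambda}(\Delta x)^{1/2}$. I would reconcile this by treating the initial-data approximation error as higher order (typically one chooses a cell-average projection so that $\|u_0 - u_{\Delta}(\cdot;0,\cdot)\|_{L^2(\Omega;L^1(I))}$ decays at least as fast as $\sigma_{\lambda}(\Delta x)$, making its square root dominated by the stated linear term for $\Delta x<1$), and by absorbing all $\omega$-independent constants, the uniform bounds $C_{TV}$ and $\mathcal{M}$ from \eqref{assum_1}, and the finiteness from \eqref{assum_5} into the single constant $C$. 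The remaining steps — verifying that the constant $C$ is genuinely independent of $M$ and $\Delta x$ — follow from the fact that every invoked estimate has that property.
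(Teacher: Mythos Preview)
Your proposal is correct and follows essentially the same route as the paper: the same triangle-inequality split into a statistical piece bounded by Theorem~\ref{main_estimate} and a discretization piece reduced, via linearity of $E_M$ and the i.i.d.\ structure, to $\|u-u_{\Delta}\|_{L^2(\Omega;L^2(I))}$, which is then controlled through the interpolation/H\"older step leading to \eqref{discrete_estimate_1}. The power mismatch you flag on the initial-projection term is real and is present in the paper's own argument as well (the proof ends with $\|u_0-u_{\Delta}(\cdot;0,\cdot)\|_{L^2(\Omega;L^1(I))}^{1/2}$, while the statement records it linearly); the paper does not explicitly reconcile this, so your proposed resolution via the higher-order nature of the cell-average projection is as good as what the paper offers.
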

\begin{proof}
For arbitrary $t>0$, using the triangle inequality we obtain
\begin{align*}
\|\Ex[u(t,\cdot)] -  E_M[u_{\Delta}(\omega;t,\cdot)]\|_{L^2(\Omega;L^2(I))} 
\leq & \|\Ex[u(\omega;t,\cdot)] -  E_M[u(\omega;t,\cdot)]\|_{L^2(\Omega;L^2(I))} \\
& + \|E_M[u(\omega;t,\cdot)] - E_M[u_{\Delta}(\omega;t,\cdot)]\|_{L^2(\Omega;L^2(I))}\\
=&: T_1 + T_2.
\end{align*}
Using the Theorem \ref{main_estimate}, we have the following estimate for $T_1$
\begin{align}\label{MC_estim_1}
T_1 \leq C M^{-1/2} \|u_0\|_{L^2(\Omega;L^2(I))}.
\end{align}
Thus, we focus on the term $T_2$. Noting the linearity of the estimator $E_M[\cdot]$, we obtain
\begin{align*}
T_2 & = \|E_M[u(\omega;t,\cdot)] - E_M[u_{\Delta}(\omega;t,\cdot)]\|_{L^2(\Omega;L^2(I))}\\
& = \Big\| \frac{1}{M}\sum_{i=1}^M \widehat{u}^i(\omega;t,\cdot) - \frac{1}{M}\sum_{i=1}^M \widehat{u}^i_{\Delta}(\omega;t,\cdot) \Big\|_{L^2(\Omega;L^2(I))}\\
& \leq \|u(\omega;t,\cdot) - u_{\Delta}(\omega;t,\cdot)\|_{L^2(\Omega;L^2(I))}\\
& \leq \|u(\omega;t,\cdot) - u_{\Delta}(\omega;t,\cdot)\|^{1/2}_{L^2(\Omega;L^1(I))} 
\|u(\omega;t,\cdot) - u_{\Delta}(\omega;t,\cdot)\|^{1/2}_{L^2(\Omega;L^{\infty}(I))}, 
\end{align*}
using Proposition \ref{discrete_random_prop} and \eqref{assum_1}. With the help of the error estimate \eqref{discrete_estimate}, we obtain
\begin{align}\label{MC_estim_2}
T_2 \leq C \mathcal{M}^{1/2} (\|u_0 - u_{\Delta}(\cdot;0,\cdot)\|^{1/2}_{L^2(\Omega;L^1(I))} + C_T \sigma_{\lambda}(\Delta x)^{1/2}).
\end{align}
Combining the estimates \eqref{MC_estim_1} and \eqref{MC_estim_2}, we have the required result.
\end{proof}
\subsubsection{Work estimates}
Our next aim is to obtain the work estimates for MC-FDM with the explicit scheme \eqref{eqn:fdm_explicit} as well as for the explicit-implicit scheme \eqref{eqn:fdm_impexp}. We have observed that as $\Delta x,\Delta t\rightarrow 0$, the computational work estimate for the explicit scheme \eqref{eqn:fdm_explicit} is asymptotically bounded as
\begin{equation*}
W_{\Delta}^{EX} = 
\begin{cases}
\mathcal{O}(\Delta x^{-3}), & \quad \text{if } \lambda \in (0,1),\\
\mathcal{O}(\Delta x^{-\lambda -2}), & \quad \text{if }\lambda \in (1,2).
\end{cases}
\end{equation*}
Hence the work for the computation of the MC estimate $E_M[u_{\Delta}(t,\cdot)]$ is of order
\begin{align}\label{MC_work_1}
W_{\Delta,M}^{EX} \leq
\begin{cases}
CM \Delta x^{-3}, & \quad \text{if } \lambda\in (0,1),\\
CM \Delta x^{-\lambda -2}, & \quad \text{if }\lambda \in (1,2).
\end{cases}
\end{align}
We compute the convergence order in terms of work from the estimate \eqref{MC_error}. To this end we equilibrate in \eqref{MC_error} the two bounds by choosing
\begin{align}\label{eqn:MCsamples}
\begin{cases}
M^{-1/2}\backsim (\Delta x)^{1/4}, \text{ i.e. } M = C\Delta x^{-1/2}, \quad & \lambda\in (0,2/3],\\
M^{-1/2}\backsim (\Delta x)^{\frac{2-\lambda}{2(2+\lambda)}}, \text{ i.e. } M = C\Delta x^{-\frac{2-\lambda}{2+\lambda}},
 \quad & \lambda\in (2/3,1)\cup (1,2).
\end{cases}
\end{align}
Inserting $M$ in \eqref{MC_work_1} yields $W_{\Delta,M}^{EX} \leq \Theta^{EX}_{\lambda}(\Delta x)$, where the function $\Theta^{EX}_{\lambda}(\Delta x)$ is given by
\begin{align}
\Theta^{EX}_{\lambda}(\Delta x) = 
\begin{cases}
C \Delta x^{-3-\frac{1}{2}}, \quad & \lambda \in (0,2/3],\\
C \Delta x^{-3- \frac{2-\lambda}{2+\lambda}}, \quad & \lambda\in(2/3,1),\\
C \Delta x^{-\lambda-2-\frac{2-\lambda}{2+\lambda}}, \quad & \lambda\in (1,2).
\end{cases}
\end{align}
It is straightforward to observe that $\|u_0 - u_{\Delta}(\cdot;0,\cdot)\|_{L^2(\Omega;L^1(I))}$ is of $\mathcal{O}(\Delta x)$,
and as a consequence, the error estimate \eqref{MC_error} becomes
\begin{align*}
\|\Ex[u(t,\cdot)] -  E_M[u_{\Delta}(\omega;t,\cdot)]\|_{L^2(\Omega;L^2(I))} \leq C\sigma^{EX}_{\lambda}(\Delta x)^{1/2},
\end{align*} 
where the constant $C$ is independent of $M$ and $\Delta x$. Hence we obtain
\begin{align}\label{MC_error_mod_explicit}
\|\Ex[u(t,\cdot)] -  E_M[u_{\Delta}(\omega;t,\cdot)]\|_{L^2(\Omega;L^2(I))} \leq 
\begin{cases}
C (W_{\Delta,M}^{EX})^{-\frac{1}{14}}, \quad & \lambda\in(0,2/3],\\[1.5mm]
C(W_{\Delta,M}^{EX})^{-\frac{2-\lambda}{6(2+\lambda)+2(2-\lambda)}}, \quad & \lambda\in(2/3,1),\\[1.5mm]
C(W_{\Delta,M}^{EX})^{-\frac{(2-\lambda)}{2(2+\lambda)^2  + 2(2-\lambda)}}, \quad & \lambda\in (1,2).
\end{cases}
\end{align}
Next we carry out the similar analysis for the explicit-implicit scheme \eqref{eqn:fdm_impexp}. Taking into account the work estimate \eqref{work_implicit}, we obtain the computational work for the MC estimate $E_M[u_{\Delta}(t,\cdot)]$
\begin{align}\label{MC_work_2}
W^{EI}_{\Delta,M} \leq
\begin{cases}
CM \Delta x^{-4}\log(\log(\Delta x^{-1})), & \quad \text{if } \lambda\in (0,1),\\[1.5mm]
CM \Delta x^{-3 -\lambda}\log(\log(\Delta x^{-1})), & \quad \text{if }\lambda\in (1,2).
\end{cases}
\end{align}
In order to equilibrate the terms in the estimate \eqref{MC_error}, we choose
\begin{align*}
\begin{cases}
M^{-1/2}\backsim (\Delta x)^{\frac{1}{4}}, \text{ i.e. } M = C\Delta x^{-\frac{1}{2}}, \quad & \lambda\in (0,1),\\[1.5mm]
M^{-1/2}\backsim (\Delta x)^{\frac{2-\lambda}{4}}, \text{ i.e. } M = C\Delta x^{-\frac{2-\lambda}{2}}, \quad & \lambda\in (1,2).
\end{cases}
\end{align*}
which in turn gives $W^{EI}_{\Delta,M}\leq \Theta^{EI}_{\lambda}(\Delta x)$, where
\begin{align}\label{wrk_temp}
\Theta^{EI}_{\lambda}(\Delta x) = 
\begin{cases}
C \Delta x^{-4-\frac{1}{2}}\log(\log(\Delta x^{-1})), \quad & \lambda \in (0,1),\\[1.5mm]
C \Delta x^{-3-\lambda - \frac{2-\lambda}{2}}\log(\log(\Delta x^{-1})), \quad & \lambda\in (1,2).
\end{cases}
\end{align}
Hence we have
\begin{align*}
\|\Ex[u(t,\cdot)] -  E_M[u_{\Delta}(\omega;t,\cdot)]\|_{L^2(\Omega;L^2(I))} \leq C\sigma^{EI}_{\lambda}(\Delta x)^{1/2}.
\end{align*} 
After incorporating \eqref{sigma_defn_implicit} in \eqref{wrk_temp} we get
\begin{align}\label{MC_error_mod_implicit}
\|\Ex[u(t,\cdot)] -  E_M[u_{\Delta}(\omega;t,\cdot)]\|_{L^2(\Omega;L^2(I))} \leq 
\begin{cases}
C \Big(W_{\Delta,M}^{EI}(\log(W_{\Delta,M}^{EI}))^{-1}\Big)^{-\frac{1}{18}}, \quad & \lambda\in(0,1),\\[1.5mm]
C \Big(W_{\Delta,M}^{EI}(\log(W_{\Delta,M}^{EI}))^{-1}\Big)^{-\frac{(2-\lambda)}{4(3+\lambda) + 2(2-\lambda)}}, \quad & \lambda\in (1,2),
\end{cases}
\end{align}
where we have used the estimate $\log(\log (\Delta x^{-1}))\leq \log (\Delta x^{-1})$  assuming that the space discretization $\Delta x \ll 1$. The constant $C$ may depend on $u_0$ or $p$ but is independent of $M$ and $\Delta x$. 
\begin{rem}
In the deterministic setup, the convergence rate for the explicit scheme \eqref{eqn:fdm_explicit} and explicit-implicit scheme \eqref{eqn:fdm_impexp} with respect to work read
\begin{align}\label{DM_estim_explicit}
\|u(t,\cdot) -  u_{\Delta}(t,\cdot)\|_{L^2(I)} \leq 
\begin{cases}
C (W_{\Delta}^{EX})^{-\frac{1}{12}}, \quad & \lambda\in(0,2/3],\\[1.5mm]
C(W_{\Delta}^{EX})^{-\frac{2-\lambda}{6(2+\lambda)}}, \quad & \lambda\in(2/3,1),\\[1.5mm]
C(W_{\Delta}^{EX})^{-\frac{(2-\lambda)}{2(2+\lambda)^2}}, \quad & \lambda\in (1,2).
\end{cases}
\end{align}
and
\begin{align}\label{DM_estim_implicit}
\|u(t,\cdot) -  u_{\Delta}(t,\cdot)\|_{L^2(I)} \leq 
\begin{cases}
C \Big(W_{\Delta}^{EI}(\log(W_{\Delta}^{EI}))^{-1}\Big)^{-\frac{1}{16}}, \quad & \lambda\in(0,1),\\[1.5mm]
C \Big(W_{\Delta}^{EI}(\log(W_{\Delta}^{EI}))^{-1}\Big)^{-\frac{(2-\lambda)}{4(3+\lambda)}}, \quad & \lambda\in (1,2).
\end{cases}
\end{align}
respectively.
It is straightforward to observe that the asymptotic efficiency (in terms of overall error vs work) of MC-FDM (with the explicit scheme \eqref{eqn:fdm_explicit} as well as the explicit-implicit scheme \eqref{eqn:fdm_impexp}) is, in general, inferior to the deterministic scheme \eqref{eqn:fdm_explicit}.
\end{rem}

\subsection{Multilevel MC-FDM}
In order to achieve an accuracy versus time bound for the stochastic FDM which lies closer to the bound \eqref{DM_estim_explicit} and \eqref{DM_estim_implicit} corresponding to the deterministic problem, we turn towards analyzing the multilevel Monte Carlo finite difference method (MLMC-FDM). The main idea behind the MLMC scheme is the simultaneous MC sampling on different levels of mesh resolution of the FDM, with $M_l$ denoting the number of samples on level $l$. We also determine the number of samples required in each level.
\begin{defi}(MLMC-FDM)
The MLMC-FDM is defined as a multilevel discretization in $x$ and $t$ with level dependent numbers of samples, denoted by $M_l$. Due to the presence of non-local operator (representation involves principle value function), we consider a family of nested grids with cell sizes
\begin{align}\label{MLMC_discretization}
\Delta x_l = 3^{-l}\Delta x_0,\qquad l\in\mathbb{N}_0 = \{0\}\cup\mathbb{N},
\end{align}
for some $\Delta x_0>0$. This ensures that each mesh contains a cell centered at $x=0$. Similarly, we denote the time-step size $\Delta t_l$ for the explicit and explicit-implicit schemes corresponding to grid size $\Delta x_l$ at level $l$. The time-step is determined by the CFL condition
\begin{align*}
\Delta t_l = C \Delta x_l^{1\vee\lambda}.
\end{align*}
The approximate solution of \eqref{fdcd_1d} computed by the scheme \eqref{eqn:fdm_explicit} or \eqref{eqn:fdm_impexp} on the grid with cell and time-step size 
$\Delta_l:=(\Delta t_l,\Delta x_l)$ is denoted by $u_l$.
\end{defi}
\subsubsection{Derivation of MLMC-FDM}
Our aim is to estimate the ensemble average i.e., $\Ex[u(t,\cdot)]$, $0<t<\infty$ of the random entropy solution of \eqref{rfdcd_1d} with the random samples $(u_0,f,A)(\omega),\omega\in\Omega$, satisfying \eqref{assum_1}-\eqref{assum_5}. As was done for MC-FDM, the expectation $\Ex[u(t,\cdot)]$ in MLMC will be estimated by approximating $u(t,\cdot)$ with the help of the FDMs.

Let $\{u_l(t,\cdot)\}_{l=0}^{\infty}$ denote the sequence of approximations of solutions of \eqref{rfdcd_1d} on the nested meshes with cell sizes $\Delta x_l$, time-steps of sizes $\Delta t_l$. Then, for a prescribed target level $L\in\mathbb{N}$ of spatial resolution, we have
\begin{align}\label{MLMC_expec}
\Ex[u_L(t,\cdot)] = \Ex\left[\sum_{l=0}^{L} \big(u_l(t,\cdot)-u_{l-1}(t,\cdot) \big) \right],
\end{align}
where we have set $u_{-1}(t,\cdot)=0$ and used the linearity of the expectation operator. Furthermore, we estimate each term in \eqref{MLMC_expec} statistically by a MC method with level dependent number of samples $M_l$. This leads to the MLMC-FDM estimator
\begin{align}
E^L[u(t,\cdot)] = \sum_{l=0}^L E_{M_l}[u_l(t,\cdot) - u_{l-1}(t,\cdot)],
\end{align}
where $E_M[u_{\Delta}(t,\cdot)]$ is evaluated by \eqref{MC_estimate_defn}.
\subsubsection{Convergence analysis}
We wish to analyze the MLMC-FDM mean field error given by
\begin{align}\label{MLMC_error}
\|\Ex[u(t,\cdot)] - E^L[u(t,\cdot)]\|_{L^2(\Omega;L^2(I))}, \quad 0<t<\infty,~L\in\mathbb{N}.
\end{align}
Our aim is to choose the appropriate sample sizes $\{M_l\}_{l=0}^{\infty}$ such that for every $L\in\mathbb{N}$, the MLMC error \eqref{MLMC_error} is minimized. The principal issue in the design of MLMC-FDM is the optimal choice of $\{M_l\}_{l=0}^{\infty}$ such that for each $L$, an error \eqref{MLMC_error} is achieved with minimal total work which is given as follows:\\
for the explicit scheme \eqref{eqn:fdm_explicit},
\begin{align}\label{work_estim_MLMC_explicit}
W^{EX}_{L,MLMC} = C\sum_{l=0}^{L} M_l W^{EX}_{\Delta_l} = 
\begin{cases}
\mathcal{O}\left(\displaystyle\sum_{l=0}^L M_l \Delta x_l^{-3}\right), \quad & \lambda\in (0,1),\\[1.5mm]
\mathcal{O}\left(\displaystyle\sum_{l=0}^L M_l \Delta x_l^{-\lambda-2}\right), \quad & \lambda \in (1,2),
\end{cases}
\end{align}
for the explicit-implicit scheme \eqref{eqn:fdm_impexp},
\begin{align}\label{work_estim_MLMC_implicit}
W^{EI}_{L,MLMC} = C\sum_{l=0}^{L} M_l W^{EI}_{\Delta_l} = 
\begin{cases}
\mathcal{O}\left(\displaystyle\sum_{l=0}^L M_l \Delta x_l^{-4}\big|\log(\log(\Delta x_l^{-1}))\big|\right), \quad & \lambda\in (0,1),\\[1.5mm]
\mathcal{O}\left(\displaystyle\sum_{l=0}^L M_l \Delta x_l^{-3-\lambda} \big|\log(\log(\Delta x_l^{-1}))\big|\right), \quad & \lambda\in (1,2)
\end{cases}
\end{align}
which are based on \eqref{MC_work_1} and \eqref{MC_work_2} respectively.

We now establish the following result on MLMC error bounds \eqref{MLMC_error}:
\begin{thm}\label{error_bnd_thm}
Consider the multilevel discretization \eqref{MLMC_discretization} along with the assumptions \eqref{assum_1}-\eqref{assum_5}. Furthermore, consider any sequence of sample sizes $\{M_l\}_{l=0}^{\infty}$ at mesh level $l$. Then, we have the following error bounds for the MLMC-FDM estimate in \eqref{MLMC_error}: for the explicit scheme \eqref{eqn:fdm_explicit},
\begin{equation}\label{error bound explicit}
\begin{split}
\| & \Ex[u(t,\cdot)] - E^L[u(t,\cdot)]\|^2_{L^2(\Omega;L^2(I))} \leq \\ &
\begin{cases}
C M_0^{-1} \|u_0\|^2_{L^2(\Omega;L^2(I))} 
+ C \mathcal{M} \Big\{\Delta x_L \||u_0|_{BV(I)}\|^{2}_{L^2(\Omega)} 
+ \Delta x_L^{1/2} \Big\} \\
+ C\mathcal{M}\Big\{\displaystyle\sum_{l=1}^L M_l^{-1}\Delta x_l^{1/2} \Big\}
\big(1 + \||u_0|_{BV(I)}\|_{L^2(\Omega)} \big), & \quad \lambda\in(0,2/3],\\[2mm]
C M_0^{-1} \|u_0\|^2_{L^2(\Omega;L^2(I))}  
+ C \mathcal{M} \Big\{\Delta x_L \||u_0|_{BV(I)}\|^{2}_{L^2(\Omega)} + \Delta x_L^{\frac{2-\lambda}{2+\lambda}} \Big\}\\
+ C\mathcal{M}\Big\{\displaystyle\sum_{l=1}^L M_l^{-1}\Delta x_l^{\frac{2-\lambda}{2+\lambda}} \Big\}
\big(1 + \||u_0|_{BV(I)}\|_{L^2(\Omega)} \big),  & \quad \lambda\in(2/3,1)\cup(1,2),
\end{cases}
\end{split}
\end{equation}
and for the explicit-implicit scheme \eqref{eqn:fdm_impexp},
\begin{equation}\label{error bound implicit}
\begin{split}
\| & \Ex[u(t,\cdot)] - E^L[u(t,\cdot)]\|^2_{L^2(\Omega;L^2(I))} \leq \\ &
\begin{cases}
C M_0^{-1} \|u_0\|^2_{L^2(\Omega;L^2(I))} 
+ C \mathcal{M} \Big\{\Delta x_L \||u_0|_{BV(I)}\|^{2}_{L^2(\Omega)} 
+ \Delta x_L^{1/2} \Big\} \\
+ C\mathcal{M}\Big\{\displaystyle\sum_{l=1}^L M_l^{-1}\Delta x_l^{1/2} \Big\}
\big(1 + \||u_0|_{BV(I)}\|_{L^2(\Omega)} \big), & \quad \lambda\in(0,1),\\[2mm]
C M_0^{-1} \|u_0\|^2_{L^2(\Omega;L^2(I))}  
+ C \mathcal{M} \Big\{\Delta x_L \||u_0|_{BV(I)}\|^{2}_{L^2(\Omega)} + \Delta x_L^{\frac{2-\lambda}{2}} \Big\}\\
+ C\mathcal{M}\Big\{\displaystyle\sum_{l=1}^L M_l^{-1}\Delta x_l^{\frac{2-\lambda}{2}} \Big\}
\big(1 + \||u_0|_{BV(I)}\|_{L^2(\Omega)} \big),  & \quad \lambda\in(1,2),
\end{cases}
\end{split}
\end{equation}
where the constant $C>0$ is independent of the parameters $l$, $\{M_{l}\}_{l=0}^{\infty}$, and $\Delta x_l$ but may depend on $t$, $u_0$, $f$, $A$, and size of the domain $I$. 
\end{thm}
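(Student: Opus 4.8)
The plan is to decompose the mean-field error into a deterministic \emph{bias} (discretization) part and a \emph{statistical} (sampling) part and to estimate each separately. Since, by the telescoping identity and linearity of expectation, $\Ex[u_L] = \sum_{l=0}^L \Ex[u_l - u_{l-1}]$ with $u_{-1}=0$, and the estimator is $E^L[u] = \sum_{l=0}^L E_{M_l}[u_l - u_{l-1}]$, I would first write
\begin{align*}
\|\Ex[u] - E^L[u]\|_{L^2(\Omega;L^2(I))} \le \|\Ex[u] - \Ex[u_L]\|_{L^2(I)} + \Big\|\sum_{l=0}^L\big(\Ex - E_{M_l}\big)[u_l-u_{l-1}]\Big\|_{L^2(\Omega;L^2(I))},
\end{align*}
and then square via $(a+b)^2 \le 2a^2 + 2b^2$, treating the two contributions independently. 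Throughout I keep the dependence on $\mathcal{M}$ and on $\||u_0|_{BV(I)}\|_{L^2(\Omega)}$ explicit so as to recover the precise form of \eqref{error bound explicit}--\eqref{error bound implicit}.

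For the bias, $\Ex[u]-\Ex[u_L]$ is deterministic, so Jensen's inequality yields $\|\Ex[u-u_L]\|^2_{L^2(I)} \le \Ex\big[\|u-u_L\|^2_{L^2(I)}\big] = \|u-u_L\|^2_{L^2(\Omega;L^2(I))}$. Applying the consistency bound \eqref{discrete_estimate_1} at level $L$, together with the cell-average projection estimate $\|u_0 - u_L(\cdot;0,\cdot)\|_{L^2(\Omega;L^1(I))} \le \Delta x_L\,\||u_0|_{BV(I)}\|_{L^2(\Omega)}$ (valid $\Pb$-a.s. since $u_0\in BV$), produces exactly the level-$L$ bias terms $C\mathcal{M}\big(\Delta x_L\||u_0|_{BV(I)}\|^2_{L^2(\Omega)} + \sigma_\lambda(\Delta x_L)\big)$ once the appropriate branch of $\sigma_\lambda$ from \eqref{sigma_defn_explicit}--\eqref{sigma_defn_implicit} is inserted.

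For the statistical part, the decisive point is that the single-level estimators $E_{M_l}$ are built from mutually independent sample families, so the summands $(\Ex - E_{M_l})[u_l-u_{l-1}]$ are independent and centred; hence the squared $L^2(\Omega;L^2(I))$ norm of their sum is the sum of the per-level variances. On each level I would invoke the single-level MC variance bound (the $L^p$ estimate of \cite{koley2013multilevel} used in Theorem \ref{main_estimate}), giving $\le CM_0^{-1}\|u_0\|^2_{L^2(\Omega;L^2(I))}$ for $l=0$ after the $L^2$-stability of Proposition \ref{discrete_random_prop} (this is the leading term), and $\le CM_l^{-1}\|u_l-u_{l-1}\|^2_{L^2(\Omega;L^2(I))}$ for $l\ge 1$. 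The core of the argument is the \emph{variance-decay} estimate for the level increments: splitting $u_l-u_{l-1} = (u_l-u)+(u-u_{l-1})$, bounding each piece by the consistency estimate \eqref{discrete_estimate_1}, and using the nestedness $\Delta x_{l-1}=3\Delta x_l$ of \eqref{MLMC_discretization} so that level-$(l-1)$ quantities differ from level-$l$ ones only by fixed constants, I obtain $\|u_l-u_{l-1}\|^2_{L^2(\Omega;L^2(I))} \le C\mathcal{M}\,\sigma_\lambda(\Delta x_l)\big(1+\||u_0|_{BV(I)}\|_{L^2(\Omega)}\big)$, where I absorb the projection contribution using $\Delta x_l \le \sigma_\lambda(\Delta x_l)$ for $\Delta x_l<1$. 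Summing $M_l^{-1}$ times this over $l=1,\dots,L$ reproduces the remaining sum $C\mathcal{M}\big(1+\||u_0|_{BV(I)}\|_{L^2(\Omega)}\big)\sum_{l=1}^L M_l^{-1}\sigma_\lambda(\Delta x_l)$, completing both branches.

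The main obstacle will be the variance-decay step. One has to push the \emph{deterministic}, $\omega$-wise convergence rate $\sigma_\lambda$ of Theorem \ref{conv_rate} through the $L^2(\Omega;\cdot)$ integration while carrying the $L^1$-to-$L^2$ Hölder interpolation (and the $L^\infty$ bound $\mathcal{M}$) uniformly in $\omega$, and at the same time reconcile the two mismatched resolutions $\Delta x_l$ and $\Delta x_{l-1}$ using only their geometric ratio from \eqref{MLMC_discretization}. Getting the $\lambda$-dependent exponents and the explicit $\mathcal{M}$- and $BV$-factors to line up with \eqref{error bound explicit}--\eqref{error bound implicit}, rather than hiding them in the generic constant, is precisely what forces the careful bookkeeping across the two schemes and the distinct ranges of $\lambda$.
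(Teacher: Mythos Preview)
Your proposal is correct and follows essentially the same route as the paper: a bias/statistical split via the telescoping identity, the discretization bound \eqref{discrete_estimate_1} for the bias, independence and zero mean of the level-wise sampling errors to decouple the variance sum, and the variance-decay estimate $\|u_l-u_{l-1}\|_{L^2(\Omega;L^2(I))}^2 \le C\mathcal{M}\,\sigma_\lambda(\Delta x_l)(1+\||u_0|_{BV(I)}\|_{L^2(\Omega)})$ obtained by inserting the exact solution and applying \eqref{discrete_estimate_1} on both pieces. The only cosmetic differences are that the paper bounds the bias via the $L^1(\Omega;L^2(I))$ estimate \eqref{discrete_estimate_2} rather than your $L^2(\Omega;L^2(I))$ Jensen step, and that the paper handles the double sum $\sum_{l}\sum_{i} Z_{i,l}$ in one go by invoking the type-2 property of $L^2$, whereas you (equivalently, and more simply in this Hilbert setting) use orthogonality across levels and then the single-level MC bound within each level.
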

\begin{proof}
Using the linearity of mathematical expectation $\Ex[\cdot]$ and applying the triangle inequality, we obtain
\begin{align*}
\|\Ex & [u(t,\cdot)] - E^L[u(t,\cdot)]\|^2_{L^2(\Omega;L^2(I))} \\
\leq & C \|\Ex[u(t,\cdot)] - \Ex[u_L(t,\cdot)]\|^2_{L^2(\Omega;L^2(I))}
 + C \|\Ex[u_L(t,\cdot)] - E^L[u(t,\cdot)]\|^2_{L^2(\Omega;L^2(I))}\\
= & C \|\Ex[u(t,\cdot)] - \Ex[u_L(t,\cdot)]\|^2_{L^2(\Omega;L^2(I))}
 + C \Big\|\sum_{l=0}^L \Ex[u_l - u_{l-1}] - E_{M_l}[u_l - u_{l-1}] \Big\|^2_{L^2(\Omega;L^2(I))}\\
= & C \|\Ex[u(t,\cdot)] - \Ex[u_L(t,\cdot)]\|^2_{L^2(\Omega;L^2(I))}
 + C\|\Ex[u_{l=0}] - \Ex_{M_0}[u_{l=0}]\|^2_{L^2(\Omega;L^2(I))}\\
& \qquad \qquad 
+ C \Big\|\sum_{l=1}^L \Ex[u_l - u_{l-1}] - E_{M_l}[u_l - u_{l-1}] \Big\|^2_{L^2(\Omega;L^2(I))}\\
= & : T_1 + T_2 + T_3,
\end{align*}
where we have used the definition of MLMC estimator \eqref{MLMC_expec} and $u_{-1}=0$. To estimate \eqref{MLMC_error}, we consider the terms $T_1$, $T_2$ and $T_3$ separately. With the help of linearity of expectation, term $T_1$ can be estimated as
\begin{align*}
T_1 & = C \|\Ex[u(t,\cdot)] - \Ex[u_L(t,\cdot)]\|^2_{L^2(\Omega;L^2(I))}\\
& = C \|\Ex[u(t,\cdot) - u_L(t,\cdot)]\|^2_{L^2(\Omega;L^2(I))}\\
& = C \|u(t,\cdot) - u_L(t,\cdot)\|^2_{L^1(\Omega;L^2(I))}, 
\end{align*}
which is bounded by \eqref{discrete_estimate_2}. The bound for term $T_2$ is given by \eqref{MC_error_bnd}. Finally, we focus on the term $T_3$. Taking into account the definition \eqref{MC_estimate_defn}, we obtain
\begin{align*}
T_3 & \leq C \Big\|\sum_{l=1}^L \sum_{i=1}^{M_l} \frac{1}{M_l} 
\big(\Ex[u_l - u_{l-1}] - (\widehat{u}^i_l - \widehat{u}^i_{l-1})\big) \Big\|^2_{L^2(\Omega;L^2(I))}\\
& = C \Big\|\sum_{l=1}^L \sum_{i=1}^{M_l} Z_{i,l} \Big\|^2_{L^2(\Omega;L^2(I))},
\end{align*}
where $Z_{i,l}$ is given by 
\begin{align*}
Z_{i,l} = \frac{1}{M_l} \big(\Ex[u_l - u_{l-1}] - (\widehat{u}^i_l - \widehat{u}^i_{l-1})\big), \quad i=1,2,3,\dots,M_l,\quad l=1,2,3,\dots,L.
\end{align*}
Observe that $Z_{i,l}$ are independent, mean zero random variables. For each fixed level $l$, the random variables $Z_{i,l}$, $i=1,...,M_l$ have an identical distribution. In other words, we have
\begin{align*}
\|Z_{i,l}\|^2_{L^2(\Omega;L^2(I))}&=\|Z_{1,l}\|^2_{L^2(\Omega;L^2(I))}, \quad i=1,...,M_l.
\end{align*}
Furthermore, since the space $L^2$ is a Banach space of type 2 (see \cite{koley2013multilevel}) , we get the following estimate 
\begin{align*}
\Big\|\sum_{l=1}^L \sum_{i=1}^{M_l} Z_{i,l} \Big\|^2_{L^2(\Omega;L^2(I))}
& \leq C \sum_{l=1}^L \sum_{i=1}^{M_l}\|Z_{i,l}\|^2_{L^2(\Omega;L^2(I))}
 = C \sum_{l=1}^L M_l \|Z_{1,l}\|^2_{L^2(\Omega;L^2(I))}\\ 
& = C \sum_{l=1}^L M_l \Big\|\frac{1}{M_l}\big(\Ex[u_l - u_{l-1}] - (\widehat{u}_l^1 - \widehat{u}_{l-1}^1) \big) \Big\|^2_{L^2(\Omega;L^2(I))} \\
& = C \sum_{l=1}^L M_l^{-1} \|\Ex[u_l - u_{l-1}] - (\widehat{u}_l^1 - \widehat{u}_{l-1}^1)\|^2_{L^2(\Omega;L^2(I))}\\
& \leq C \sum_{l=1}^L M_l^{-1} \|u_l - u_{l-1}\|^2_{L^2(\Omega;L^2(I))}.
\end{align*} 
Now we estimate 
\begin{align*}
\|u_l - u_{l-1}\|_{L^2(\Omega;L^2(I))} \leq \|u(t,\cdot) - u_{l}(t,\cdot)\|_{L^2(\Omega;L^2(I))} 
+ \|u(t,\cdot) - u_{l-1}(t,\cdot)\|_{L^2(\Omega;L^2(I))}.
\end{align*}
Both terms on the right hand side can be estimated by \eqref{discrete_estimate_1}. Hence we end up with
\begin{align*}
\|u_l - u_{l-1}\|_{L^2(\Omega;L^2(I))} \leq & C\mathcal{M}^{1/2} \Big(\|u_0-u_l(\cdot;0,\cdot)\|^{1/2}_{L^2(\Omega;L^1(I))}\\
& \quad + \|u_0-u_{l-1}(\cdot;0,\cdot)\|^{1/2}_{L^2(\Omega;L^1(I))} \Big) + C \mathcal{M}^{1/2} \sigma_{\lambda}(\Delta x_l)^{1/2}, 
\end{align*}
where $\mathcal{M}$ is defined by \eqref{assum_1}. We can approximate the first two terms on the right hand side as follows:
\begin{align*}
\|u(\cdot;0,\cdot)-u_l(\cdot;0,\cdot)\|_{L^2(\Omega;L^1(I))} \leq \Delta x_l \||u_0|_{BV(I)}\|_{L^2(\Omega)}.
\end{align*}
Thus, we have
\begin{align*}
\|u(\cdot;0,\cdot)-u_l(\cdot;0,\cdot)\|^{1/2}_{L^2(\Omega;L^1(I))} \leq \Delta x_l^{1/2} \||u_0|_{BV(I)}\|^{1/2}_{L^2(\Omega)}.
\end{align*}
Subsequently, we obtain
\begin{align*}
\|u_l - u_{l-1}\|_{L^2(\Omega;L^2(I))} & \leq C\mathcal{M}^{1/2} \big(\Delta x_l^{1/2} \||u_0|_{BV(I)}\|^{1/2}_{L^2(\Omega)} \big) 
+ C \mathcal{M}^{1/2} \sigma_{\lambda}(\Delta x_l)^{1/2}.
\end{align*}
More precisely, for the explicit scheme \eqref{eqn:fdm_explicit}, we have
\begin{align*}
\|u_l - u_{l-1}\|_{L^2(\Omega;L^2(I))} & \leq 
\begin{cases}
C \mathcal{M}^{1/2} \big(1 + \Delta x_l^{\frac{1}{4}}\||u_0|_{BV(I)}\|^{1/2}_{L^2(\Omega)} \big) \Delta x_l^{\frac{1}{4}}, \quad & \lambda\in(0,2/3],\\[1.5mm]
C \mathcal{M}^{1/2} \big(1 + \Delta x_l^{\frac{\lambda}{2+\lambda}}\||u_0|_{BV(I)}\|^{1/2}_{L^2(\Omega)} \big) 
\Delta x_l^{\frac{2-\lambda}{2(2+\lambda)}}, \quad & \lambda\in(2/3,1)\cup(1,2),
\end{cases}
\end{align*}
and similarly, for the numerical scheme \eqref{eqn:fdm_impexp},
\begin{align*}
\|u_l - u_{l-1}\|_{L^2(\Omega;L^2(I))} & \leq 
\begin{cases}
C \mathcal{M}^{1/2} \big(1 + \Delta x_l^{\frac{1}{4}}\||u_0|_{BV(I)}\|^{1/2}_{L^2(\Omega)} \big) \Delta x_l^{\frac{1}{4}}, \quad & \lambda\in(0,1),\\[1.5mm]
C \mathcal{M}^{1/2} \big(1 + \Delta x_l^{\frac{\lambda}{4}}\||u_0|_{BV(I)}\|^{1/2}_{L^2(\Omega)} \big) 
\Delta x_l^{\frac{2-\lambda}{4}}, \quad & \lambda\in (1,2).
\end{cases}
\end{align*}
Taking into account $\Delta x_l \leq \mathcal{O}(1)$ we have for the scheme \eqref{eqn:fdm_explicit},
\begin{align}\label{MLMC_estim_1_expl}
\|u_l - u_{l-1}\|_{L^2(\Omega;L^2(I))} \leq
\begin{cases}
C \mathcal{M}^{1/2} \big(1 + \||u_0|_{BV(I)}\|_{L^2(\Omega)} \big)^{\frac{1}{2}} \Delta x_l^{\frac{1}{4}}, \quad & \lambda\in(0,2/3],\\[1.5mm]
C \mathcal{M}^{1/2} \big(1 + \||u_0|_{BV(I)}\|_{L^2(\Omega)} \big)^{\frac{1}{2}} \Delta x_l^{\frac{2-\lambda}{2(2+\lambda)}}, \quad & \lambda\in(2/3,1)\cup(1,2),
\end{cases}
\end{align}
and for the scheme \eqref{eqn:fdm_impexp},
\begin{align}\label{MLMC_estim_1_impl}
\|u_l - u_{l-1}\|_{L^2(\Omega;L^2(I))} \leq
\begin{cases}
C \mathcal{M}^{1/2} \big(1 + \||u_0|_{BV(I)}\|_{L^2(\Omega)} \big)^{\frac{1}{2}} \Delta x_l^{\frac{1}{4}}, \quad & \lambda\in(0,1),\\[1.5mm]
C \mathcal{M}^{1/2} \big(1 + \||u_0|_{BV(I)}\|_{L^2(\Omega)} \big)^{\frac{1}{2}} \Delta x_l^{\frac{2-\lambda}{4}}, \quad & \lambda\in (1,2).
\end{cases}
\end{align}
By substituting \eqref{MLMC_estim_1_expl} in the estimates of term $T_3$ and summing it over $l=1,2,3,\dots,L$, we end up with
\begin{align*}
T_3 \leq 
\begin{cases}
C\mathcal{M}\Big\{\displaystyle\sum_{l=1}^L M_l^{-1}\Delta x_l^{1/2} \Big\}
\big(1 + \||u_0|_{BV(I)}\|_{L^2(\Omega)} \big), & \quad \lambda\in(0,2/3],\\
C\mathcal{M}\Big\{\displaystyle\sum_{l=1}^L M_l^{-1}\Delta x_l^{\frac{2-\lambda}{2+\lambda}} \Big\}
\big(1 + \||u_0|_{BV(I)}\|_{L^2(\Omega)} \big),  & \quad \lambda\in(2/3,1)\cup(1,2).
\end{cases}
\end{align*}
Finally using $\Delta x_l\leq \Delta x_0\leq \mathcal{O}(1)$ and adding the contribution with the estimates emerging from the terms $T_1$ and $T_2$, we obtain the prescribed error bound \eqref{error bound explicit} for the explicit scheme \eqref{eqn:fdm_explicit}. We can perform a similar analysis for the explicit-implicit scheme \eqref{eqn:fdm_impexp} to obtain the MLMC error bound \eqref{error bound implicit}.
\end{proof}

\subsection{Optimizing the number of samples on each level}
Our analysis to determine Monte Carlo samples sizes $\{M_l\}_{l=0}^{\infty}$ will be based on the error bound \eqref{error bound explicit}-\eqref{error bound implicit}. By optimizing number of samples, we mean to determine the number of samples needed to minimize the computational work with the constraints that the error tolerance is $\varepsilon_{er}$. By adapting the approach in \cite{koley2013multilevel}, our argument will make use of Lagrange multipliers. Theorem \ref{error_bnd_thm} will be instrumental to obtain the following lemma.
\begin{lem}
Let the multilevel discretization be given by $\Delta x_l = 3^{-l}\Delta x_0$ for some $\Delta x_0>0$. Consider the work estimates given by \eqref{work_estim_MLMC_explicit} and \eqref{work_estim_MLMC_implicit} for the schemes \eqref{eqn:fdm_explicit} and \eqref{eqn:fdm_impexp} respectively. Let $\Theta$ be the order of convergence of the schemes \eqref{eqn:fdm_explicit} and \eqref{eqn:fdm_impexp}. Based on Corollary \ref{rate_conv_p}, $\Theta$ is precisely given by
\begin{align*}
\Theta = 
\begin{cases}
\text{ for explicit scheme; }\Theta_{EX} = 
\begin{cases}
\frac{1}{4}, \quad \lambda\in (0,2/3],\\[1.5mm]
\frac{2-\lambda}{2(2+\lambda)}, \quad \lambda\in (2/3,1)\cup(1,2);
\end{cases}
\\
\text{ for explicit-implicit scheme; }\Theta_{EI} = 
\begin{cases}
\frac{1}{4}, \quad \lambda\in (0,1),\\[1.5mm]
\frac{2-\lambda}{4},\quad \lambda\in (1,2).
\end{cases}
\end{cases}
\end{align*}
Assume that $L$ and $\Delta x_0$ are chosen such that $\Delta x_L^{2\Theta-r} > \Delta x_0^{-r}$ for some $r \geq 3$. Given an error tolerance $\varepsilon_{er} > 0$, the MLMC-FDM error in a compact form scales as
\begin{align}\label{err_MLMC}
Error_L = \left\| \Ex[u(t,\cdot)] - E^L[u(t,\cdot)] \right\|^2_{L^2(\Omega;L^2(I))} \approx C \left(M_0^{-1} + \Delta x_L^{2\Theta} + \sum_{l=1}^L M_l^{-1}\Delta x_l^{2\Theta} \right),
\end{align}
where the constant $C>0$ is independent of $\Delta x_l$, but depends on $u_0$, $f$, $A$, 
$\mathcal{M}$ and size of the domain $I$.
Furthermore, the optimal sample numbers with respect to the work estimate \eqref{work_estim_MLMC_explicit} and with respect to the error bound \eqref{err_MLMC}, are given by
\begin{align}\label{sample_no_expl}
M_l^{EX} & \backsimeq M_0^{EX} \Delta x_0^{\Theta} 3^{-l \Big(\Theta
+\frac{r^{EX}_{\lambda}}{2} \Big)}, \qquad l=1,2,3,\dots,L
\end{align}
for the explicit scheme \eqref{eqn:fdm_explicit}, where $M_0^{EX}$ is given by
\begin{align}\label{sample_no_expl_0}
M_0^{EX}\backsimeq \left[\frac{1}{\varepsilon_{er}^{EX} - \Delta x_0^{2\Theta}
3^{-2\Theta L}}
\left(1 + \Delta x_0^{\Theta}\sum_{j=1}^L 3^{j\left(\frac{r^{EX}_{\lambda}}{2}-\Theta \right)}
 \right) \right],
\end{align}
and for the scheme \eqref{eqn:fdm_impexp},
\begin{align}\label{sample_no_impl}
M_l^{EI} & \backsimeq \frac{\log(\Delta x_0^{-1})^{\frac{1}{2}} \Delta x_0^{\Theta} 3^{-l\big(\Theta
+\frac{r^{EI}_{\lambda}}{2}\big)}}
{\Big(l\log 3+\log(\Delta x_0^{-1}) \Big)^{\frac{1}{2}}} M_0^{EI}, \qquad l=1,2,3,\dots,L,
\end{align}
where $M_0^{EI}$ is given by
\begin{equation}\label{sample_no_impl_0}
\begin{aligned}
M_0^{EI} & \backsimeq \displaystyle\frac{1}{\log(\Delta x_0^{-1})^{\frac{1}{2}}}\\
& \qquad\left[\displaystyle\frac{1}{\varepsilon_{er}^{EI} 
- \Delta x_0^{2\Theta}3^{-2\Theta L}} 
\left(\log(\Delta x_0^{-1})^{\frac{1}{2}} + \Delta x_0^{\Theta} \displaystyle\sum_{j=1}^L 
3^{j \left(\frac{r^{EI}_{\lambda}}{2}-\Theta \right)}
\Big(j\log 3 +\log(\Delta x_0^{-1})\Big)^{\frac{1}{2}} \right) \right].
\end{aligned}
\end{equation}
Here $\backsimeq$ indicates that this is the number of samples up to a constant which may depend on the data $(u_0,f,A)$ and the domain, but not on the sample sizes on the various levels. Finally, $r_{\lambda}^{EX}$ and $r_{\lambda}^{EI}$ are given by 
\begin{align*}
r_{\lambda}^{EX} = 
\begin{cases}
3 , & \quad \lambda\in (0,1), \\
\lambda +2, & \quad \lambda\in (1,2),
\end{cases}
\end{align*}
\begin{align*}
r_{\lambda}^{EI} = 
\begin{cases}
4 , & \quad \lambda\in (0,1), \\
3 +\lambda, & \quad \lambda\in (1,2)
\end{cases}
\end{align*}
for the schemes \eqref{eqn:fdm_explicit} - \eqref{eqn:fdm_impexp} respectively.

As $L\rightarrow\infty$, the error of the MLMC-FDM algorithm with respect to work is given by
\begin{align}
\|\Ex[u(t,\cdot)] - E^L[u(t,\cdot)]\|^2_{L^2(\Omega;L^2(I))} \leq 
\begin{cases}\label{eqn:MLMC_err_exp}
C \left(W_{L,MLMC}^{EX} \right)^{-\frac{1}{6}}, \quad & \lambda\in(0,2/3],\\[1.5mm]
C\left(W_{L,MLMC}^{EX}\right)^{-\frac{2-\lambda}{3(2+\lambda)}}, \quad & \lambda\in(2/3,1),\\[1.5mm]
C\left(W_{L,MLMC}^{EX}\right)^{-\frac{(2-\lambda)}{(2+\lambda)^2}}, \quad & \lambda\in (1,2)
\end{cases}
\end{align}
for the explicit scheme \eqref{eqn:fdm_explicit}, and 
\begin{align}\label{eqn:MLMC_err_imp}
\|\Ex[u(t,\cdot) - E^L[u(t,\cdot)]\|^2_{L^2(\Omega;L^2(I))}\leq
\begin{cases}
C\left(W^{EI}_{L,MLMC}\Big(\log \left(W^{EI}_{L,MLMC} \right)\Big)^{-1} \right)^{-\frac{1}{8}}, \quad & \lambda\in(0,1),\\[1.5mm]
C\left(W^{EI}_{L,MLMC}
\Big(\log \left(W^{EI}_{L,MLMC}\right)\Big)^{-1} \right)^{-\frac{(2-\lambda)}{2(3+\lambda)}}, \quad & \lambda\in (1,2)
\end{cases}
\end{align}
for the explicit-implicit scheme \eqref{eqn:fdm_impexp}.
\end{lem}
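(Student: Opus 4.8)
The plan is to treat the optimization of $\{M_l\}_{l=0}^L$ as a constrained minimization solved by Lagrange multipliers, following the strategy of \cite{koley2013multilevel}. First I would reduce the error bounds of Theorem~\ref{error_bnd_thm} to the compact form \eqref{err_MLMC}. In those bounds the finest-level bias enters through the two terms $\Delta x_L\||u_0|_{BV(I)}\|^2_{L^2(\Omega)}$ and $\Delta x_L^{2\Theta}$; since $2\Theta\le 1$ and $\Delta x_L<1$ we have $\Delta x_L\le\Delta x_L^{2\Theta}$, so the first is absorbed into $\Delta x_L^{2\Theta}$ after enlarging $C$ to also swallow the $BV$-seminorm moments. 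The coarse-level statistical term is $M_0^{-1}\|u_0\|^2_{L^2(\Omega;L^2(I))}$ and the level-difference variances contribute $\sum_{l=1}^L M_l^{-1}\Delta x_l^{2\Theta}$, so substituting the stated value of $\Theta$ in each $\lambda$-regime collapses the bound precisely to \eqref{err_MLMC}.

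Next I would set up the problem: minimize the total work $W_{L,MLMC}$ from \eqref{work_estim_MLMC_explicit} or \eqref{work_estim_MLMC_implicit} subject to $Error_L=\varepsilon_{er}$, with the $\{M_l\}$ as design variables. Writing the per-level work as $w_l=\Delta x_l^{-r_\lambda}\phi_l$, where $\phi_l\equiv 1$ for the explicit scheme and $\phi_l=l\log 3+\log\Delta x_0^{-1}=\log\Delta x_l^{-1}$ for the explicit-implicit scheme (after bounding $\log\log\Delta x_l^{-1}\le\log\Delta x_l^{-1}$), and moving the fixed bias $\Delta x_L^{2\Theta}$ to the right-hand side of the constraint, I form $\mathcal{L}=\sum_{l=0}^L M_l w_l+\mu\bigl(M_0^{-1}+\sum_{l=1}^L M_l^{-1}\Delta x_l^{2\Theta}-(\varepsilon_{er}-\Delta x_L^{2\Theta})\bigr)$. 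Setting $\partial\mathcal{L}/\partial M_l=0$ yields $M_l\propto\sqrt{e_l/w_l}$ with $e_0=1$, $e_l=\Delta x_l^{2\Theta}$, whence $M_l/M_0=\Delta x_l^{\Theta}(\Delta x_l/\Delta x_0)^{r_\lambda/2}\sqrt{\phi_0/\phi_l}$. Using $\Delta x_l=3^{-l}\Delta x_0$ this gives exactly \eqref{sample_no_expl} and \eqref{sample_no_impl}. Substituting these back into the constraint and isolating $M_0$ produces \eqref{sample_no_expl_0} and \eqref{sample_no_impl_0}; here the hypothesis $\Delta x_L^{2\Theta-r}>\Delta x_0^{-r}$ is what keeps the statistical budget $\varepsilon_{er}-\Delta x_0^{2\Theta}3^{-2\Theta L}$ positive, so that $M_0$ is well-defined and positive.

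Finally, to obtain the error-versus-work rates, I would balance bias against statistical error by taking $\varepsilon_{er}\sim\Delta x_L^{2\Theta}=\Delta x_0^{2\Theta}3^{-2\Theta L}$ and letting $L\to\infty$. Since $r_\lambda/2-\Theta>0$ (because $r_\lambda\ge 3$ and $\Theta\le 1/4$), every geometric sum $\sum_{l=1}^L 3^{l(r_\lambda/2-\Theta)}$ is dominated by its top term $3^{L(r_\lambda/2-\Theta)}$. Inserting the optimal $M_l$ into $\sum_l M_l w_l$ and keeping the dominant contribution gives $W_{L,MLMC}\sim 3^{Lr_\lambda}$ for the explicit scheme, and $3^{Lr_\lambda}$ times a $\log$-factor (so that $W/\log W\sim 3^{Lr_\lambda}$) for the explicit-implicit one, while $Error_L\sim 3^{-2\Theta L}$. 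Eliminating $3^L$ between these relations yields $Error_L\sim W_{L,MLMC}^{-2\Theta/r_\lambda}$; feeding in the tabulated pairs $(\Theta,r_\lambda)$ reproduces every exponent in \eqref{eqn:MLMC_err_exp}, and the $\log$-corrections give the $(W/\log W)$ form in \eqref{eqn:MLMC_err_imp}.

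The hard part will be the asymptotic bookkeeping in this last step, particularly for the explicit-implicit scheme: the $\log\log\Delta x_l^{-1}$ weights make the per-level work $l$-dependent, so the geometric sums carry an extra factor $(l\log 3+\log\Delta x_0^{-1})^{1/2}$, and one must verify that after summation and inversion these indeed collapse to a single $\log W_{L,MLMC}$ correction. One must also confirm that the dominant-term approximation of each series is uniform across the relevant $\lambda$-regimes, and that the constants — depending on $\Delta x_0$, $u_0$, $f$, $A$, $\mathcal{M}$ and $|I|$ but not on $L$ or $\{M_l\}$ — are genuinely $L$-independent.
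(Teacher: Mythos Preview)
Your proposal is correct and follows essentially the same route as the paper: reduce the error bound of Theorem~\ref{error_bnd_thm} to the compact form \eqref{err_MLMC}, set up a Lagrangian for the constrained work minimization, solve the first-order optimality conditions to obtain $M_l\propto\sqrt{e_l/w_l}$, back-substitute into the constraint to pin down $M_0$, and then specialize $\varepsilon_{er}\sim\Delta x_L^{2\Theta}$ to extract the asymptotic error-versus-work exponents $2\Theta/r_\lambda$.

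One small mis-attribution: the hypothesis $\Delta x_L^{2\Theta-r}>\Delta x_0^{-r}$ is \emph{not} what guarantees positivity of the statistical budget $\varepsilon_{er}-\Delta x_L^{2\Theta}$ (that comes simply from choosing $\varepsilon_{er}=2\Delta x_L^{2\Theta}$, as the paper does explicitly). Rather, the paper invokes this hypothesis at the final simplification of the work estimate, to ensure that in the bracket $\Delta x_0^{-r_\lambda}+\Delta x_L^{2\Theta-r_\lambda}$ the second term dominates, yielding $W_{L,MLMC}\backsimeq\Delta x_L^{-r_\lambda}$. Apart from this, your plan matches the paper's argument step for step.
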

\begin{proof}
With the mesh discretization given by $\Delta x_l = 3^{-l} \Delta x_0$, the MLMC-FDM work estimates \eqref{work_estim_MLMC_explicit} and \eqref{work_estim_MLMC_implicit} become
\begin{align*}
W^{EX}_{L,MLMC} & = C\sum_{l=0}^L M_l\Delta x_l^{-r_{\lambda}^{EX}} = C\Delta x_0^{-r_{\lambda}^{EX}}\sum_{l=0}^L M_l 3^{r_{\lambda}^{EX}l},\\
W^{EI}_{L,MLMC} & = C\sum_{l=0}^L M_l\Delta x_l^{-r_{\lambda}^{EI}} \log(\log(\Delta x_l^{-1})) \leq C\Delta x_0^{-r_{\lambda}^{EI}}
\sum_{l=0}^L M_l 3^{r_{\lambda}^{EI}l} \Big(l\log 3 + \log(\Delta x_0^{-1})\Big). 
\end{align*}
Incorporating \eqref{MLMC_discretization}, the bound for the multi-level errors \eqref{error bound explicit} and \eqref{error bound implicit} at level $L$ becomes
\begin{align*}
Error_L & = C \left(M_0^{-1} + \Delta x_L^{2\Theta} + \sum_{l=1}^L M_l^{-1}\Delta x_l^{2\Theta} \right)\\
& = C \left(M_0^{-1} + \Delta x_0^{2\Theta} 3^{-2\Theta L} + \Delta x_0^{2\Theta}\sum_{l=1}^L M_l^{-1} 3^{-2\Theta l} \right).
\end{align*}
Using a Lagrange multiplier $\beta$, we consider the following Lagrangian by incorporating error tolerance 
$\varepsilon_{er}$
\begin{align*}
\mathcal{L} := W_{L,MLMC} -\beta (\varepsilon_{er} - Error_L).
\end{align*}
Consequently, the first order optimality condition provides
\begin{align*}
\frac{\partial \mathcal{L}}{\partial M_l} = 0, \qquad l= 0,1,2,...,L.
\end{align*}
This implies, for the explicit scheme \eqref{eqn:fdm_explicit},
\begin{align*}
\begin{cases}
\Delta x_0^{-r^{EX}_{\lambda}} = \beta^{EX} (M_0^{EX})^{-2} , & l=0,\\
\Delta x_0^{-r^{EX}_{\lambda}} 3^{r^{EX}_{\lambda}l} = \beta^{EX} \Delta x_0^{2\Theta} 
(M_l^{EX})^{-2} 3^{-2\Theta l}, & l=1,2,3,\dots,L;
\end{cases}
\end{align*}
and for the explicit-implicit scheme \eqref{eqn:fdm_impexp},
\begin{align*}
\begin{cases}
\Delta x_0^{-r^{EI}_{\lambda}}\log(\Delta x_0^{-1}) = \beta^{EI} (M_0^{EX})^{-2} , & l=0,\\
\Delta x_0^{-r^{EI}_{\lambda}} 3^{r^{EI}_{\lambda}l} \Big(l\log 3 +\log(\Delta x_0^{-1})\Big) 
= \beta^{EI} \Delta x_0^{2\Theta} (M_l^{EX})^{-2} 3^{-2\Theta l}, & l=1,2,3,\dots,L,
\end{cases}
\end{align*}
where the multiplier $\beta^{EX}$ for the explicit scheme and $\beta^{EI}$ for the explicit-implicit scheme are independent of the level $l$. This leads us to the sample numbers with the scheme \eqref{eqn:fdm_explicit},
\begin{align}\label{eqn:MLMC_err_exp_beta}
\begin{cases}
M_0^{EX} = (\beta^{EX})^{\frac{1}{2}} \Delta x_0^{\frac{r^{EX}_{\lambda}}{2}},\\
M_l^{EX}  = (\beta^{EX})^{\frac{1}{2}} \big(\Delta x_0 3^{-l}\big)^{\Theta +\frac{r^{EX}_{\lambda}}{2}}, \quad l=1,2,3,\dots,L;
\end{cases}
\end{align}
and similarly with the scheme \eqref{eqn:fdm_impexp},
\begin{align}\label{eqn:MLMC_err_imp_beta}
\begin{cases}
M_0^{EI} = \left(\displaystyle\frac{\beta^{EI}}{\log \left(\Delta x_0^{-1}\right)} \right)^{\frac{1}{2}} \Delta x_0^{\frac{r^{EI}_{\lambda}}{2}},\\
M_l^{EI}  = \left(\displaystyle\frac{\beta^{EI}}{l\log 3 +\log(\Delta x_0^{-1})} \right)^{\frac{1}{2}} \big(\Delta x_0 3^{-l}\big)^{\Theta + \frac{r^{EI}_{\lambda}}{2}}, \quad l=1,2,3,\dots,L.
\end{cases}
\end{align}
Using the constraint $Error_L = \varepsilon_{er}^{EX}$ for explicit scheme \eqref{eqn:fdm_explicit} and $Error_L = \varepsilon_{er}^{EI}$ for \eqref{eqn:fdm_impexp}, we deduce the following
\begin{align*}
\varepsilon_{er}^{EX} & \approx \Delta x_0^{2\Theta} 3^{-2\Theta L} + \frac{\Delta x_0^{-\frac{r^{EX}_{\lambda}}{2}}}{(\beta^{EX})^{\frac{1}{2}}} 
\left(1 + \Delta x_0^{\Theta} \sum_{l=1}^L 3^{l \left(\frac{r^{EX}_{\lambda}}{2}
-\Theta \right)} \right),\\
\varepsilon_{er}^{EI} & \approx \Delta x_0^{2\Theta} 3^{-2\Theta L} 
+ \frac{\Delta x_0^{-\frac{r^{EI}_{\lambda}}{2}}}{(\beta^{EI})^{\frac{1}{2}}} 
\left(\log(\Delta x_0^{-1})^{\frac{1}{2}} + \Delta x_0^{\Theta} 
\sum_{l=1}^L 3^{l \left(\frac{r^{EI}_{\lambda}}{2}-\Theta \right)}
\Big(l\log 3 + \log(\Delta x_0^{-1}) \Big)^{\frac{1}{2}} \right)
\end{align*}
which in turn provides the following expressions for the Lagrange multipliers
\begin{align*}
\beta^{EX} & = \frac{1}{\Delta x_0^{r^{EX}_{\lambda}}} \left[\frac{1}{\varepsilon_{er}^{EX} - \Delta x_0^{2\Theta} 3^{-2\Theta L}}
\left( 1 + \Delta x_0^{\Theta} \sum_{l=1}^L 3^{l \big(\frac{r^{EX}_{\lambda}}{2}
-\Theta\big)} \right)\right]^2, \\
\beta^{EI} & = \frac{1}{\Delta x_0^{r^{EI}_{\lambda}}} \Bigg[\frac{1}{\varepsilon_{er}^{EI}
- \Delta x_0^{2\Theta} 3^{-2\Theta L}}
\left( \log(\Delta x_0^{-1})^{\frac{1}{2}} + \Delta x_0^{\Theta} \sum_{l=1}^L 3^{l \big(\frac{r^{EI}_{\lambda}}{2} - \Theta\big)}
\Big(l\log 3 + \log(\Delta x_0^{-1})\Big)^{\frac{1}{2}} \right)\Bigg]^2.
\end{align*}
Using these expression of the multipliers in \eqref{eqn:MLMC_err_exp_beta} and \eqref{eqn:MLMC_err_imp_beta} leads to the optimal number of samples \eqref{sample_no_expl} - \eqref{sample_no_impl_0}.

As a consequence, the work estimates become
\begin{align*}
W^{EX}_{L,MLMC} \backsimeq & \left[\frac{1}{\varepsilon_{er}^{EX} 
- \Delta x_0^{2\Theta}3^{-2\Theta L}} 
\left(1 + \Delta x_0^{\Theta} \sum_{j=1}^L 3^{j\left(\frac{r^{EX}_{\lambda}}{2}-\Theta \right)} \right) \right] \Delta x_0^{-r^{EX}_{\lambda}}
\left(1 + \Delta x_0^{\Theta} \sum_{l=1}^L 
3^{l\left(\frac{r^{EX}_{\lambda}}{2}-\Theta\right)} \right);\\
W^{EI}_{L,MLMC} \backsimeq & \left[\frac{1}{\varepsilon_{er}^{EI} 
- \Delta x_0^{2\Theta}3^{-2\Theta L}} 
\left(\log(\Delta x_0^{-1})^{\frac{1}{2}} + \Delta x_0^{\Theta} 
\sum_{j=1}^L 3^{j \left(\frac{r^{EI}_{\lambda}}{2}-\Theta \right)} 
\Big(j\log 3+\log(\Delta x_0^{-1}) \Big)^{\frac{1}{2}}\right) \right] \\
& \quad \Delta x_0^{-r^{EI}_{\lambda}} \left(\log(\Delta x_0^{-1})^{\frac{1}{2}}
+ \Delta x_0^{\Theta} \sum_{l=1}^L 3^{l\left(\frac{r^{EI}_{\lambda}}{2}-\Theta\right)} 
\Big(l\log 3+\log(\Delta x_0^{-1})\Big)^{\frac{1}{2}}\right).
\end{align*}
Let us observe that for the explicit scheme \eqref{eqn:fdm_explicit},
\begin{align*}
\frac{r^{EX}_{\lambda}}{2}-\Theta =
\begin{cases}
\displaystyle\frac{5}{4}, & \quad \lambda\in (0,2/3];\\
\displaystyle\frac{4(1+\lambda)}{2(2+\lambda)}, & \quad \lambda\in (2/3,1);\\
\displaystyle\frac{\lambda^2 + 5\lambda+2}{2(2+\lambda)}, & \quad \lambda\in (1,2);
\end{cases}
\end{align*}
and subsequently,
\begin{align*}
\frac{r^{EX}_{\lambda}}{2}-\Theta > 0, \text{ for all } \lambda\in (0,2).
\end{align*}
Thus, $W^{EX}_{L,MLMC}$ will be dominated by the terms $3^{L\big(\frac{r^{EX}_{\lambda}}{2}-\Theta\big)}$. Let us choose the error tolerance for the explicit scheme \eqref{eqn:fdm_explicit} as
\begin{align*}
\varepsilon_{er}^{EX}= 2\Delta x_0^{2\Theta} 3^{-2\Theta L}= 2\Delta x_L^{2\Theta}.
\end{align*}
Then the work estimate for explicit MLMC becomes of the order
\begin{align*}
W^{EX}_{L,MLMC} & \backsimeq  \Delta x_L^{-2\Theta} \Delta x_0^{-r^{EX}_{\lambda}} 
\left(1+ \Delta x_0^{\Theta} 
3^{L \big(\frac{r^{EX}_{\lambda}}{2} - \Theta\big)} \right)^2\\
& \backsimeq \Delta x_L^{-2\Theta} \left( \Delta x_0^{-\frac{r^{EX}_{\lambda}}{2}} 
+ \Delta x_0^{\Theta-\frac{r^{EX}_{\lambda}}{2}}
3^{L\big(\frac{r^{EX}_{\lambda}}{2}- \Theta\big)} \right)^2\\
& \backsimeq \Delta x_L^{-2\Theta} \left(\Delta x_0^{-r^{EX}_{\lambda}} 
+ \Delta x_L^{2\Theta - r^{EX}_{\lambda}} \right).
\end{align*}
Assuming that $\Delta x_0$ and $L$ can be chosen such that $\Delta x_L^{2\Theta - r^{EX}_{\lambda}}>\Delta x_0^{- r^{EX}_{\lambda}}$, the work estimate simplifies to
\begin{align}\label{opt_exp_workbnd}
W^{EX}_{L,MLMC} & \backsimeq  \Delta x_L^{-2\Theta}\Delta x_L^{2\Theta - r^{EX}_{\lambda}}
= \Delta x_L^{- r^{EX}_{\lambda}}.
\end{align}
Inserting \eqref{opt_exp_workbnd} into the asymptotic error bound we obtain 
\begin{align*}
\varepsilon_{er}^{EX} \backsimeq \Delta x_L^{2\Theta} \backsimeq  
\left({W^{EX}_{L,MLMC}} \right)^{- \frac{2\Theta}{r^{EX}_{\lambda}}}.
\end{align*}
Incorporating the values of $\Theta$ and $r^{EX}_{\lambda}$ for the explicit scheme \eqref{eqn:fdm_explicit}, we have the following error estimate in terms of work
\begin{align}\label{opt_exp_rate}
\|\Ex[u(t,\cdot) - E^L[u(t,\cdot)]\|^2_{L^2(\Omega;L^2(I))}\leq
\begin{cases}
C \Big(W_{L,MLMC}^{EX} \Big)^{-\frac{1}{6}}, \quad & \lambda\in(0,2/3],\\[1.5mm]
C \Big(W_{L,MLMC}^{EX} \Big)^{-\frac{(2-\lambda)}{3(2+\lambda)}}, \quad & \lambda\in(2/3,1),\\[1.5mm]
C \Big(W_{L,MLMC}^{EX} \Big)^{-\frac{(2-\lambda)}{(2+\lambda)^2}}, \quad & \lambda\in (1,2).
\end{cases}
\end{align}

Next, let us consider the explicit-implicit scheme \eqref{eqn:fdm_impexp} and observe that
\begin{align*}
\frac{r^{EI}_{\lambda}}{2}-\Theta =
\begin{cases}
\displaystyle\frac{7}{4}, & \quad \lambda\in (0,1);\\
\displaystyle\frac{4+ 3\lambda}{4}, & \quad \lambda\in (1,2);
\end{cases}
\end{align*}
and hence $W^{EI}_{L,MLMC}$ will be dominated by the terms $3^{L\big(\frac{r^{EX}_{\lambda}}{2}-\Theta\big)}$.
By choosing the error tolerance $\varepsilon_{er}^{EI} = 2\Delta x_L^{2\Theta}$, we obtain the work estimates of the order
\begin{align*}
W^{EI}_{L,MLMC} \backsimeq & \Delta x_L^{-2\Theta} \Delta x_0^{-r^{EI}_{\lambda}}
\left(\log(\Delta x_0^{-1})^{\frac{1}{2}}
+ \Delta x_0^{\Theta} \Big(L + \log(\Delta x_0^{-1}) \Big)^{\frac{1}{2}} 
3^{L\big(\frac{r^{EI}_{\lambda}}{2} -\Theta\big)} \right)^2\\
\backsimeq & \Delta x_L^{-2\Theta} \left(\log(\Delta x_0^{-1})^{\frac{1}{2}}\Delta x_0^{-\frac{r^{EI}_{\lambda}}{2}}
+ \Delta x_0^{\Theta-\frac{r^{EI}_{\lambda}}{2}}\Big(L + \log(\Delta x_0^{-1}) \Big)^{\frac{1}{2}} 
3^{L \big(\frac{r^{EI}_{\lambda}}{2}-\Theta\big)} \right)^2\\
\backsimeq & \Delta x_L^{-2\Theta} \left(\log(\Delta x_0^{-1}) \Delta x_0^{-r^{EI}_{\lambda}} 
+ \log(\Delta x_L^{-1})\Delta x_L^{2\Theta-r^{EI}_{\lambda}} \right).
\end{align*}
If we choose $\Delta x_0$ and $L$ such that
\begin{align*}
\log(\Delta x_0^{-1}) \Delta x_0^{-r^{EI}_{\lambda}} 
< \log(\Delta x_L^{-1})\Delta x_L^{2\Theta-r^{EI}_{\lambda}},
\end{align*}
then the work is asymptotically dominated by
\begin{align}\label{opt_imp_workbnd}
W^{EI}_{L,MLMC} \backsimeq \log(\Delta x_L^{-1})\Delta x_L^{-r^{EI}_{\lambda}}.
\end{align}
Consequently, by inserting \eqref{opt_imp_workbnd} into the asymptotic error bound, we get
\begin{align*}
\varepsilon_{er}^{EI} \backsimeq \Delta x_L^{2\Theta} \backsimeq \left(W^{EI}_{L,MLMC}
\Big(\log \left(W^{EI}_{L,MLMC} \right)\Big)^{-1} \right)^{-\frac{2\Theta}{r_{\lambda}^{EI}}}.
\end{align*}
Finally, incorporating the values of $\Theta$ and $r_{\lambda}^{EI}$ for the explicit-implicit scheme \eqref{eqn:fdm_impexp}, we have the following error estimate in terms of the work
\begin{align}\label{opt_imp_rate}
\|\Ex[u(t,\cdot) - E^L[u(t,\cdot)]\|^2_{L^2(\Omega;L^2(I))}\leq
\begin{cases}
C\left(W^{EI}_{L,MLMC}
\Big(\log \left(W^{EI}_{L,MLMC} \right)\Big)^{-1} \right)^{-\frac{1}{8}}, \quad & \lambda\in(0,1),
\\[1.5mm]
C\left(W^{EI}_{L,MLMC}
\Big(\log \left(W^{EI}_{L,MLMC} \right)\Big)^{-1} \right)^{-\frac{(2-\lambda)}{2(3+\lambda)}}, \quad & \lambda\in (1,2),
\end{cases}
\end{align}
and the result follows.
\end{proof}

\begin{rem}(Comparison of rates)\\
In the case of $A \equiv 0$ (hyperbolic conservation laws), it is observed in \cite{mishra2016numerical} that the convergence rates, in terms of accuracy vs. work, are considerably reduced for MC-FDM in comparison to the deterministic case. However, the convergence rates significantly improve with multilevel Monte Carlo approach. Similarly, for the case of degenerate convection-diffusion equation ($\lambda=2$) in \cite{koley2013multilevel}, it is demonstrated that the obtained convergence rates of MLMC-FDM is better than single level Monte Carlo, even though these rates are worse when compared with the deterministic schemes. We list some of these theoretical estimates in Table \ref{tab:errvswork_lit}.
\begin{table}[htbp]
\begin{tabular}{|c|c|c|c|}
\hline
\textbf{Model} & \textbf{Base scheme} & \textbf{\begin{tabular}[c]{@{}c@{}}Scaling for \\ MC-FDM\end{tabular}} & \textbf{\begin{tabular}[c]{@{}c@{}}Scaling for \\ MLMC-FDM\end{tabular}} \\ \hline
\begin{tabular}[c]{@{}c@{}}hyperbolic conservation laws \cite{mishra2016numerical}\\ $A \equiv 0$\end{tabular} & Explicit scheme of order $s$ & ${W}^{-\frac{s}{2 + 2s}}$ & ${W}^{-\frac{s}{2 + s}}$                                                                         \\ \hline
\multirow{2}{*}{\begin{tabular}[c]{@{}c@{}}degenerate convection-diffusion \cite{koley2013multilevel}\\ $\lambda=2$\end{tabular}} & \begin{tabular}[c]{@{}c@{}}Explicit scheme with \\ monotone flux\end{tabular} &  ${W}^{-\frac{1}{13}}$ & ${W}^{-\frac{1}{6}}$ \\ \cline{2-4} 
& \begin{tabular}[c]{@{}c@{}}Implicit scheme with \\ monotone flux\end{tabular} & ${\left(\frac{W}{\log(W)}\right)}^{-\frac{1}{7}}$ & ${\left(\frac{W}{\log(W)}\right)}^{-\frac{2}{7}}$ \\ \hline
\end{tabular}
\caption{Existing error vs. work scaling for MC-FDM and MLMC-FDM schemes in some limiting cases. Here $W$ denotes the work estimate for the underlying method used.}
\label{tab:errvswork_lit}
\end{table}
 
In the current paper, error vs. work bounds rates are obtained for various values of $\lambda\in (0,2)$. For the explicit scheme, the convergence rates of MC-FDM are worse than the deterministic scheme (refer to \eqref{MC_error_mod_explicit}), which is expected. However, the rates improved significantly with MLMC-FDM, c.f. \eqref{opt_exp_rate}. The same behaviour is observed for the explicit-implicit scheme, c.f. \eqref{MC_error_mod_implicit} and \eqref{opt_imp_rate}.
\end{rem}


\section{Numerical results}\label{results}
We now numerically test the performance of the MLMC-FDMs proposed in this work. We set the underlying target model to be the one-dimensional Buckley-Leverett equation describing a two-phase flow through porous media. Let $u(x,t) \in [0,1]$ represent the water saturation in an oil-water mixture. Then $u(x,t)$ can be modeled by the convection-diffusion equation \eqref{fdcd}, with the numerical flux
\begin{equation}\label{eqn:bl_flux}
f(u) = \frac{\mathcal{P}^w(u)}{\mathcal{P}^w(u) + \mu \mathcal{P}^o(u)}\ , \qquad A(u) = \max(u-\alpha,0)\ , \qquad \alpha \geq 0\ ,
\end{equation}
where $\mathcal{P}^w, \mathcal{P}^o$ are the relative permeability of the water and oil phase, respectively, while $\mu>0$ is the ratio of phase viscosities. In our experiments, we set
\[
\mathcal{P}^w(u) = u^2, \qquad \mathcal{P}^o(u) = (1-u)^2.
\]
Note that the non-local diffusion in \eqref{eqn:bl_flux} does not necessarily approximate the physical diffusion term for two-phase flows. However, our objective is to demonstrate the performance of the proposed methods and validate the expected convergence rates. Thus, we adhere to the choice of $A(u)$ given in \eqref{eqn:bl_flux}. 

Before presenting the numerical results, we briefly discuss a few additional approximations that need to be made for a practical implementation of the various algorithms discussed so far.

\subsection{Finite computational domain and boundary extensions}\label{sec:frac_diss}
Since it is not feasible to work with an infinite number of nodes, we focus on a finite domain with a suitable extension of the solution. In particular, we consider the computational domain to be a symmetric interval $I = [-K,K]$, which is uniformly discretized using $N^x$ cells with a mesh size $\Delta x = 2 K/ N^x$. Based on the definition of the grid points/cell-centers in Section \ref{numerical}, the mesh contains a cell centered at $x_0 = 0$. Thus, we need an odd number of cells in the mesh, i.e., $N^x = 2P+1$, which leads to the following cell-centers and the cell-interfaces on the finite domain
\begin{equation*}
\begin{aligned}
-K + \frac{\Delta x}{2} = x_{-P} < x_{-P+1} < ... x_0 &= 0 < ... < x_{P-1} < x_{P} = K - \frac{\Delta x}{2}, \\
-K = x_{-P-\frac{1}{2}} < x_{-P+\frac{1}{2}} < &... < x_{P-\frac{1}{2}} < x_{P+\frac{1}{2}} = K.
\end{aligned}
\end{equation*}
To approximate the the non-local term \eqref{dif_integralform}, we need to suitably extend the solution beyond the domain $[-K,K]$. For the purpose of this work, we assume that the solution can be extended in a constant manner beyond the original domain, such that
\begin{equation}\label{eqn:boundary_ext}
U_{j} = U_{-P} \quad \forall \quad j < -P \qquad \text{and} \qquad U_{j} = U_{P} \quad \forall \quad j > P.
\end{equation}
While one can argue about the validity of such an extension, especially due to the influence of the non-local term, we adhere to \eqref{eqn:boundary_ext} in order to reduce the computational cost associated with the repeated evaluations of the deterministic samples in the MLMC algorithm. Similar boundary conditions were also considered in \cite{droniou2010numerical}.

If $|i| \leq P$, we have $-P-i \leq 0$ and $P-i \geq 0$. Thus, under the assumption \eqref{eqn:boundary_ext} and using the notation $A_{j}:= A(U_{j})$, the non-local term in the scheme \eqref{eqn:fdm_explicit} (or \eqref{eqn:fdm_impexp}) can be written as
\begin{equation*}
\begin{aligned}
\sum \limits_{j\neq 0} G_j \bigl(A_{i+j} - A_i \bigr) &= \sum \limits_{j < -P-i } G_j \bigl(A_{i+j} - A_i \bigr) +  \sum \limits_{\substack{j = -P-i \\ j\neq 0}}^{j = P-i} G_j \bigl(A_{i+j} - A_i \bigr) +  \sum \limits_{j > P-i } G_j \bigl(A_{i+j} - A_i \bigr) \\
&= (A_{-P} - A_i) \sum \limits_{j < -P-i} G_j +  \sum \limits_{\substack{j = -P-i \\ j\neq 0}}^{j = P-i} G_j \bigl(A_{i+j} - A_i \bigr) +  (A_{P} - A_i) \sum \limits_{j > P-i } G_j  \\
&=  \frac{c_\lambda}{\lambda (\Delta x)^\lambda} \frac{(A_{-P} - A_i)}{(P+i+\frac{1}{2})^\lambda} + \frac{c_\lambda}{\lambda  (\Delta x)^\lambda}  \frac{(A_{P} - A_i)}{(P-i+\frac{1}{2})^\lambda}  +  \sum \limits_{\substack{j = -P-i \\ j\neq 0}}^{j = P-i} G_j \bigl(A_{i+j} - A_i \bigr).
\end{aligned}
\end{equation*}
The above formulation requires values $G_j$ for $ |j| \leq 2P$, which can be pre-computed and stored for a given mesh. 

\subsection{Variance and MLMC error estimation}
In the experiments, we wish to compute the variance of the computed estimated mean. This is achieved by using the following stable algorithm, which was also used in \cite{koley2013multilevel}
\begin{equation*}
\begin{aligned}
V_L &= \sum \limits_{l=1}^L \Delta V_l + V_0, \\
\Delta V_l &= E_{M_l} \left[ (u_l - u_{l-1} - E_{M_l}[u_l - u_{l-1}] )^2\right],\\
V_0 &= E_{M_0} \left[ (u_0 - E_{M_0}[u_0] )^2\right].
\end{aligned}
\end{equation*}
The number of samples used in each level are chosen according to the formulas \eqref{sample_no_expl} - \eqref{sample_no_impl_0}, by setting $p=2$. If the resulting number is not an integer, it is rounded off to the smallest integer greater than this number. 

In order to estimate the error $\|\Ex[u(t,\cdot) - E^L[u(t,\cdot)]\|_{L^2(\Omega;L^2(I))}$, we use the root mean square estimate
\begin{equation}\label{eqn:rms}
\rms = \sqrt{\frac{1}{Q}\sum \limits_{k=1}^Q(\rms_k)^2 },
\end{equation}
where
\[
\rms_k = \frac{\|Z_{\text{ref}}(T,.) - Z_k(T,.) \|_{L^2(I)}}{\|Z_{\text{ref}}(T,.) \|_{L^2(I)}}.
\]
Here, $Z_k$ refers to the computed estimated mean for the index $k$, while $Z_{\text{ref}}$ denotes to the reference mean. The index $k$ refers to independent runs of the MLMC-FDM algorithm, needed to obtain different realizations of the probability space. The sensitivity of the error with respect to the parameter $Q$ has been investigated in \cite{mishra2012sparse, mishra2013multi}. It was noted that $Q=30$ is sufficient for most problems, to remove statistical fluctuations. The reference solution $Z_\text{ref}$ is obtained by: 
\begin{enumerate}
\item Uniformly discretizing the sample space $\Omega$ (which is assumed to be a closed box), with the discretized points denoted by $\{\omega_s\}_s$.
\item Computing the numerical approximation $u_{\Delta}(\omega_s;T,.)$ for each $\omega_s$ on a fine mesh.
\item Applying a trapezoidal quadrature rule to approximate the integral $|\Omega|^{-1}\int_\Omega u(\omega;T,.) \ \ud \omega$, using the points $\{\omega_s\}_s$.
\end{enumerate}

\begin{rem}\label{rem:ref_soln}
The method described above to generate the reference solution makes sense only if the various random parameters are sampled from a uniform distribution, which is the choice we adhere to in this work (see Section \ref{sec:num_MLMC}).
\end{rem}

\begin{rem}\label{rem:error}
Note that the error approximated by \eqref{eqn:rms} corresponds to $\sqrt{Error_L}$, where $Error_L$ is estimated in \eqref{err_MLMC}.
\end{rem}

\subsection{Deterministic simulations}
We consider the parametrised initial condition
\begin{equation}\label{eqn:ic}
u_0(c; x) = \begin{cases} 0.85 \quad &\text{if } -0.5 + c < x < 0\ , \\ 0.1 \quad &\text{otherwise}\ .\end{cases}
\end{equation} 
The local Lax Friedrich flux
\begin{align*}
F(U_i^n,U_{i+1}^n) = \frac{1}{2}(f(U_i^n)+f(U_{i+1}^n)) - \frac{1}{2}\max\big(|f^\prime(U_i^n)|, |f^\prime(U_{i+1}^n)|\big) (U_{i+1}^n - U_i^n),
\end{align*}
is used, with the time-step evaluated using the CFL condition
\begin{equation}\label{eqn:numCFL}
\frac{\Delta t}{\Delta x^{1\vee\lambda}} = CFL  < 1.
\end{equation}
We choose $CFL=0.2$ for all experiments presented in this paper.

We begin by comparing the simulations at time $T=1$ with the schemes \eqref{eqn:fdm_explicit} and \eqref{eqn:fdm_impexp}, by setting $c=0.0$, $\mu = 0.5$, $\alpha = 0.2$ (see \eqref{eqn:bl_flux}) and $K=5$ (the domain $[-5,5]$). The numerical results on a mesh with $N=501$ nodes are shown in Figure \ref{fig:scheme_compare}. The solutions obtained with the explicit and explicit-implicit FDMs are almost indistinguishable. We make two observations from the average run-times listed in Table \ref{tab:deterministic_times}. Firstly, the run-time increases if the exponent $\lambda$ is increased beyond unity. This can be easily understood by looking at the CFL condition \eqref{eqn:numCFL} used to determine $\Delta t$. Secondly, the computational cost is significantly higher with the  explicit-implicit scheme, as we need to solve a non-linear system at each time-step. The solution profiles at various instances of time are shown in Figure \ref{fig:increasing_time}.

\begin{figure}[htbp]
\begin{center}
\subcaptionbox{$\lambda = 0.5$}{\includegraphics[width=0.33\textwidth]{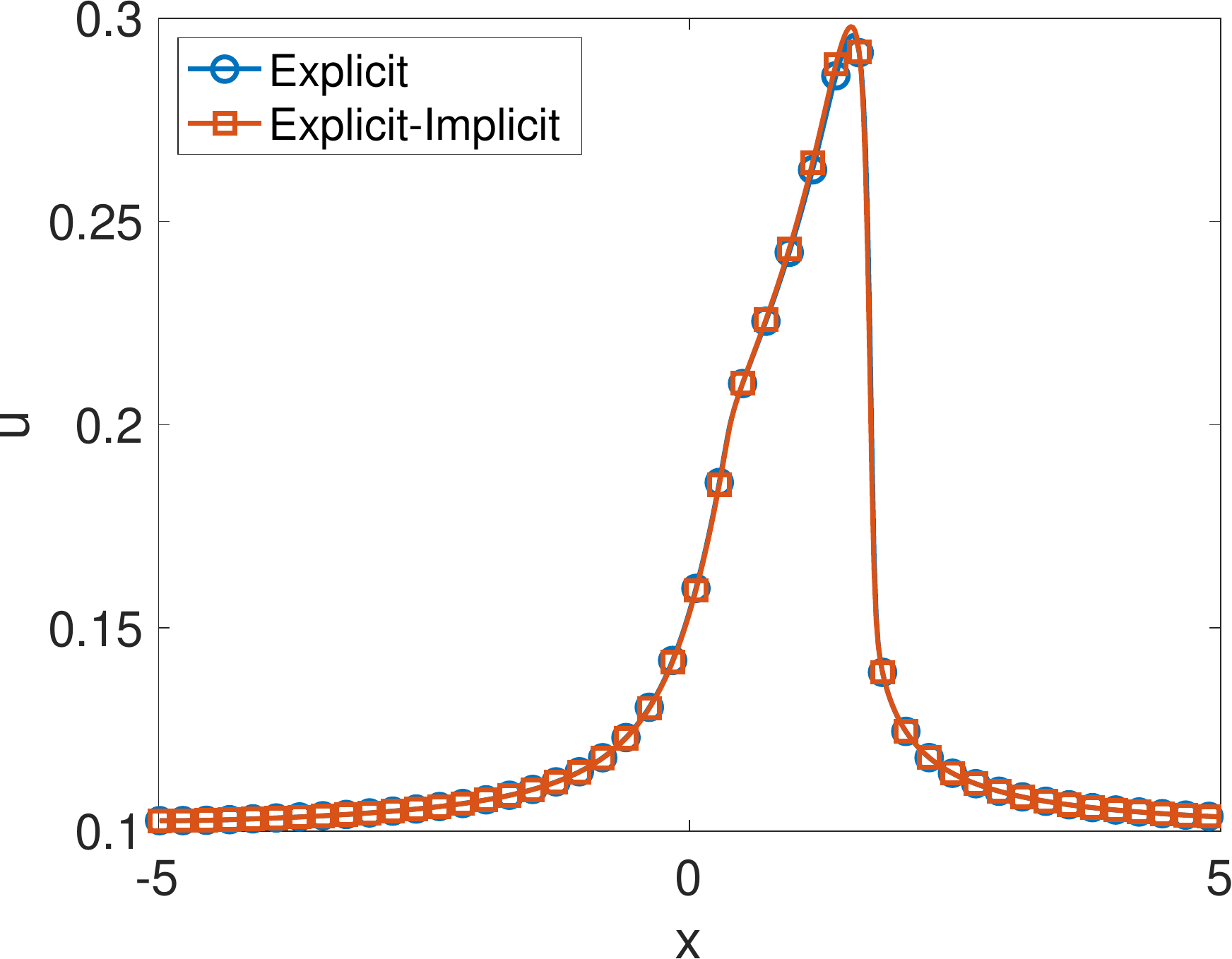}}
\subcaptionbox{$\lambda = 1.5$}{\includegraphics[width=0.33\textwidth]{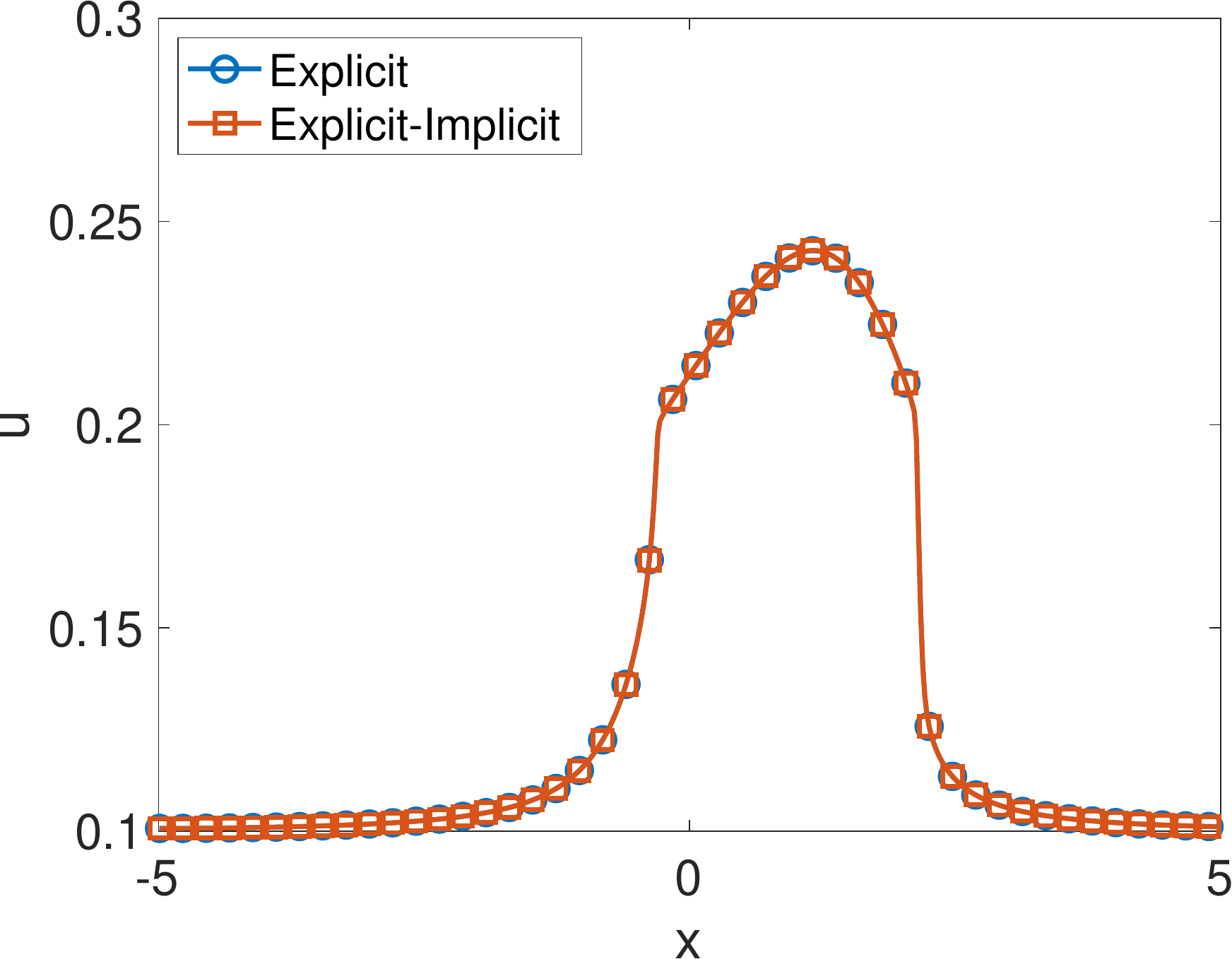}}
\caption{Buckley-Leverett problem evaluated at $T=1$ with schemes \eqref{eqn:fdm_explicit} and \eqref{eqn:fdm_impexp}. The solution is obtained on a mesh with 501 cells, and parameters $c = 0.0$, $\mu = 0.5$ and $\alpha = 0.2$.}
\label{fig:scheme_compare}
\end{center}
\end{figure}

\begin{table}[htbp]
\begin{tabular}{|c|c|c|}
\hline
\textbf{Scheme}                             & $\lambda$ & \textbf{Run time(s)} \\ \hline
\multirow{2}{*}{Explicit}          & 0.5       &     \num{1.17e-1}       \\ \cline{2-3} 
                                   & 1.5       &    \num{1.09e+0}        \\ \hline
\multirow{2}{*}{Explicit-implicit} & 0.5       &   \num{3.78e+1}         \\ \cline{2-3} 
                                   & 1.5       &    \num{2.72e+2}       \\ \hline
\end{tabular}
\caption{Average run times for deterministic simulations on a mesh with $N=501$ cells and parameters $c = 0.0$, $\mu = 0.5$, $\alpha = 0.2$}
\label{tab:deterministic_times}
\end{table}

\begin{figure}[htbp]
\begin{center}
\subcaptionbox{$\lambda = 0.5$}{\includegraphics[width=0.33\textwidth]{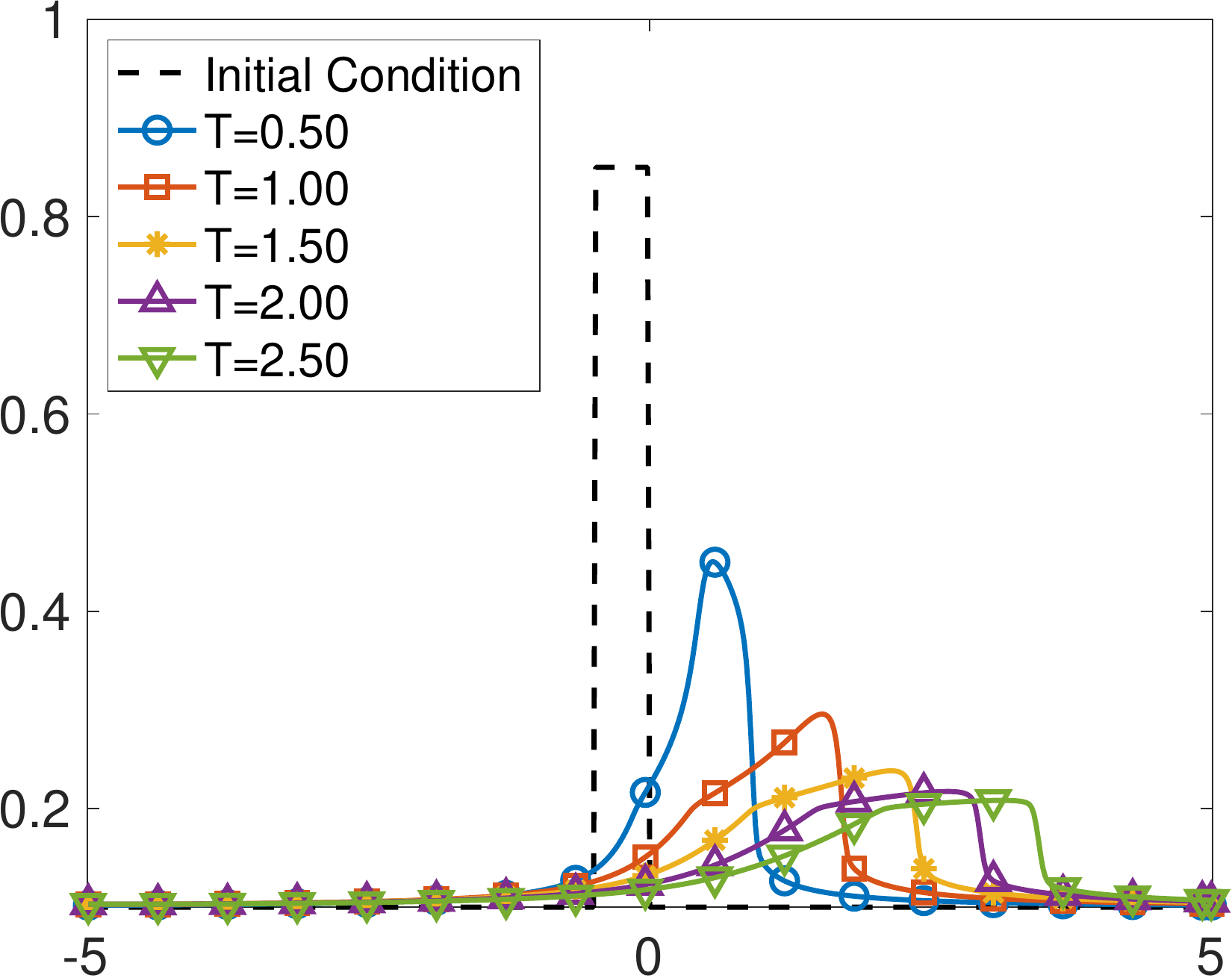}}
\subcaptionbox{$\lambda = 1.5$}{\includegraphics[width=0.33\textwidth]{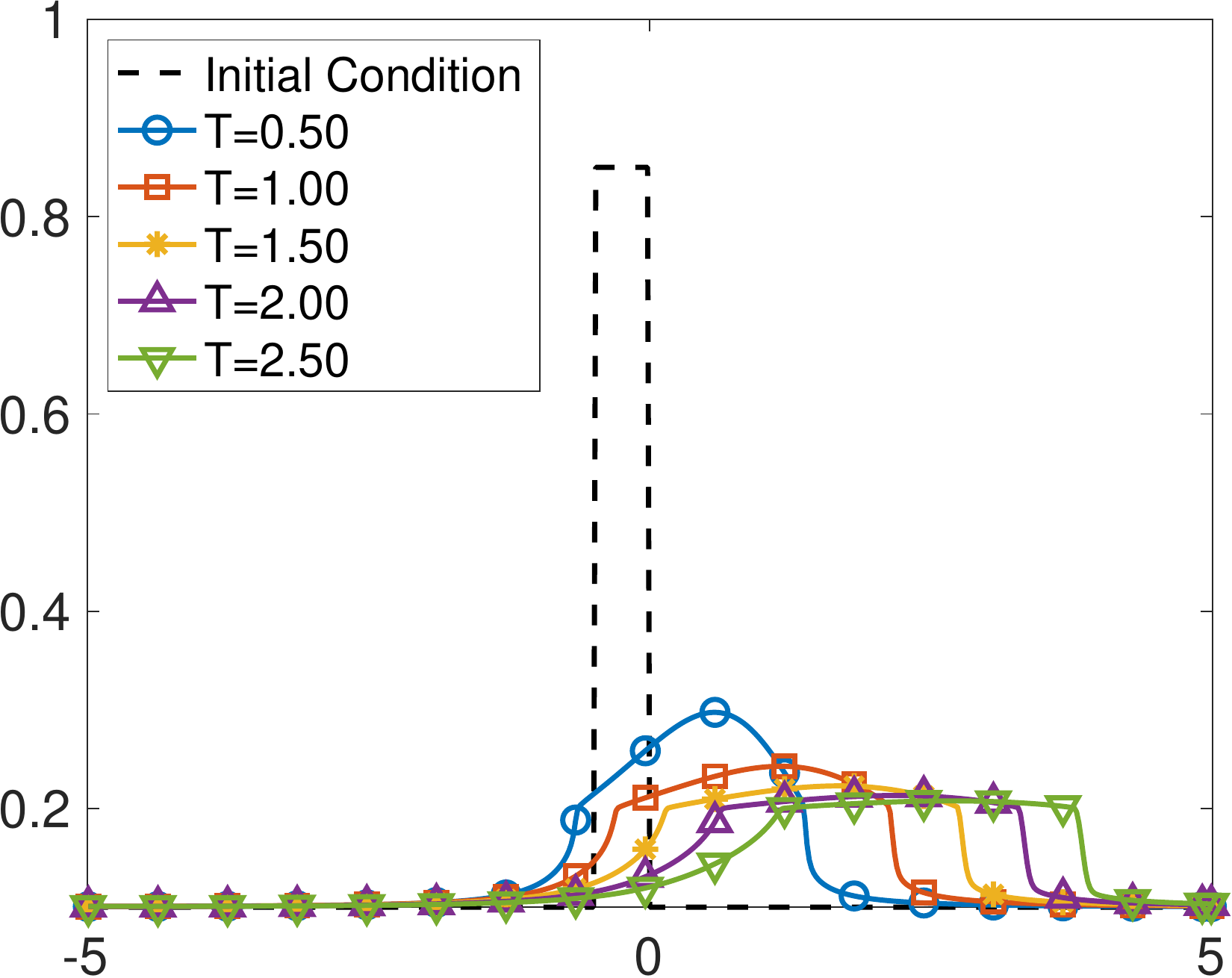}}
\caption{Buckley-Leverett problem evaluated at $T=0.5,1,1.5,2$ and $2.5$ with the scheme \eqref{eqn:fdm_explicit}. The solution is obtained on a mesh with 501 cells, and parameters $c = 0.0$, $\mu = 0.5$ and $\alpha = 0.2$.}
\label{fig:increasing_time}
\end{center}
\end{figure}

Next, we analyse the effect of the fractional exponent on the solution. We take $c = 0.0$, $\mu = 0.5$, $\alpha = 0.2$, $N=501$ and simulate the solution using the scheme \eqref{eqn:fdm_explicit} for varying values of $\lambda$. As can be seen in Figure \ref{fig:lambda_compare}, the solution has sharp features resembling a shock for smaller values of $\lambda$. The diffusion term becomes stronger as $\lambda$ is increased from 0 to 2. To represent all exponent partitions considered in \eqref{eqn:MLMC_err_exp} and \eqref{eqn:MLMC_err_imp}, we choose $\lambda = 0.5, 0.75$ and $1.5$ for the Monte-Carlo simulations in the next section. In Figure \ref{fig:mesh_ref}, we plot the solution for these three exponents on each of the mesh levels to be considered in the MLMC algorithm, and on the mesh used to generate the reference solution.

\begin{rem}\label{rem:asymptotics}
In the limit $\lambda \downarrow 0$ or $\lambda \uparrow 2$, the non-local diffusion term in the model \eqref{fdcd} converges to a source term or a Laplacian diffusion term respectively. The schemes considered in the present work are not expected to preserve these asymptotic properties. While asymptotic preserving schemes are available (see \cite{droniou2014asymptotics}), it is not possible to obtain rigorous theoretical convergence and work estimates for such schemes at present.
\end{rem}

\begin{figure}[htbp]
\begin{center}
\subcaptionbox{$\lambda = 0.1$}{\includegraphics[width=0.32\textwidth]{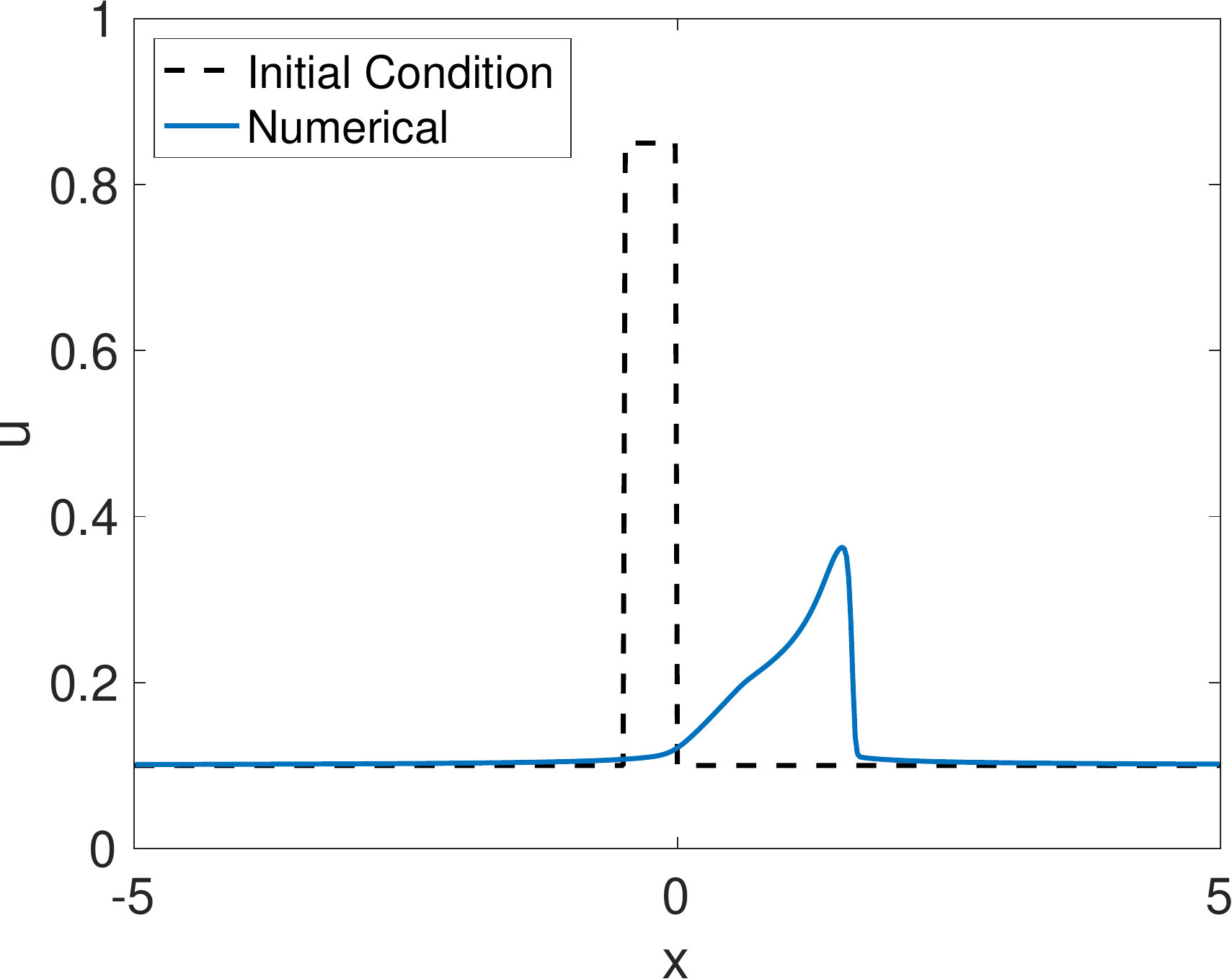}}
\subcaptionbox{$\lambda = 0.5$}{\includegraphics[width=0.32\textwidth]{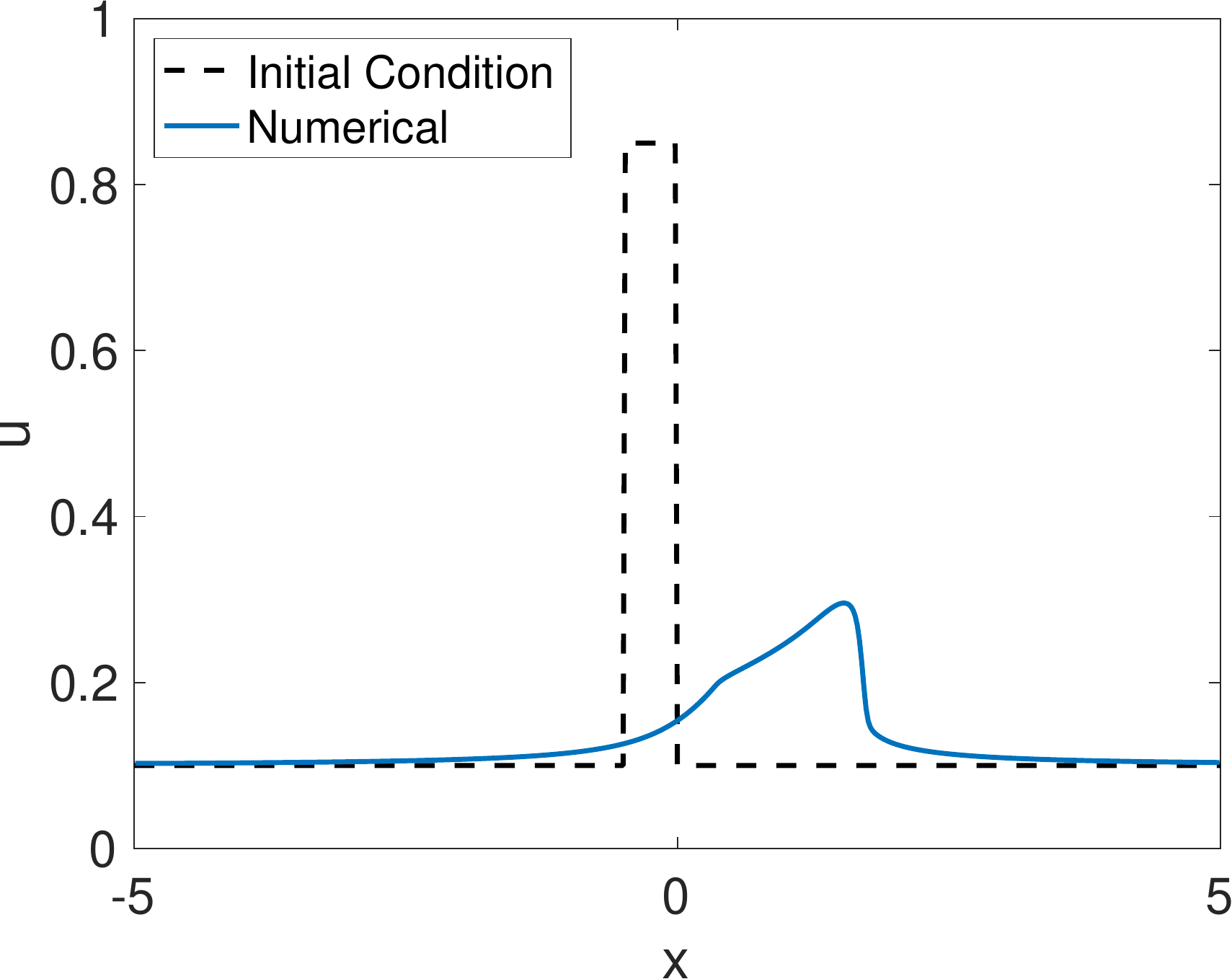}}
\subcaptionbox{$\lambda = 0.9$}{\includegraphics[width=0.32\textwidth]{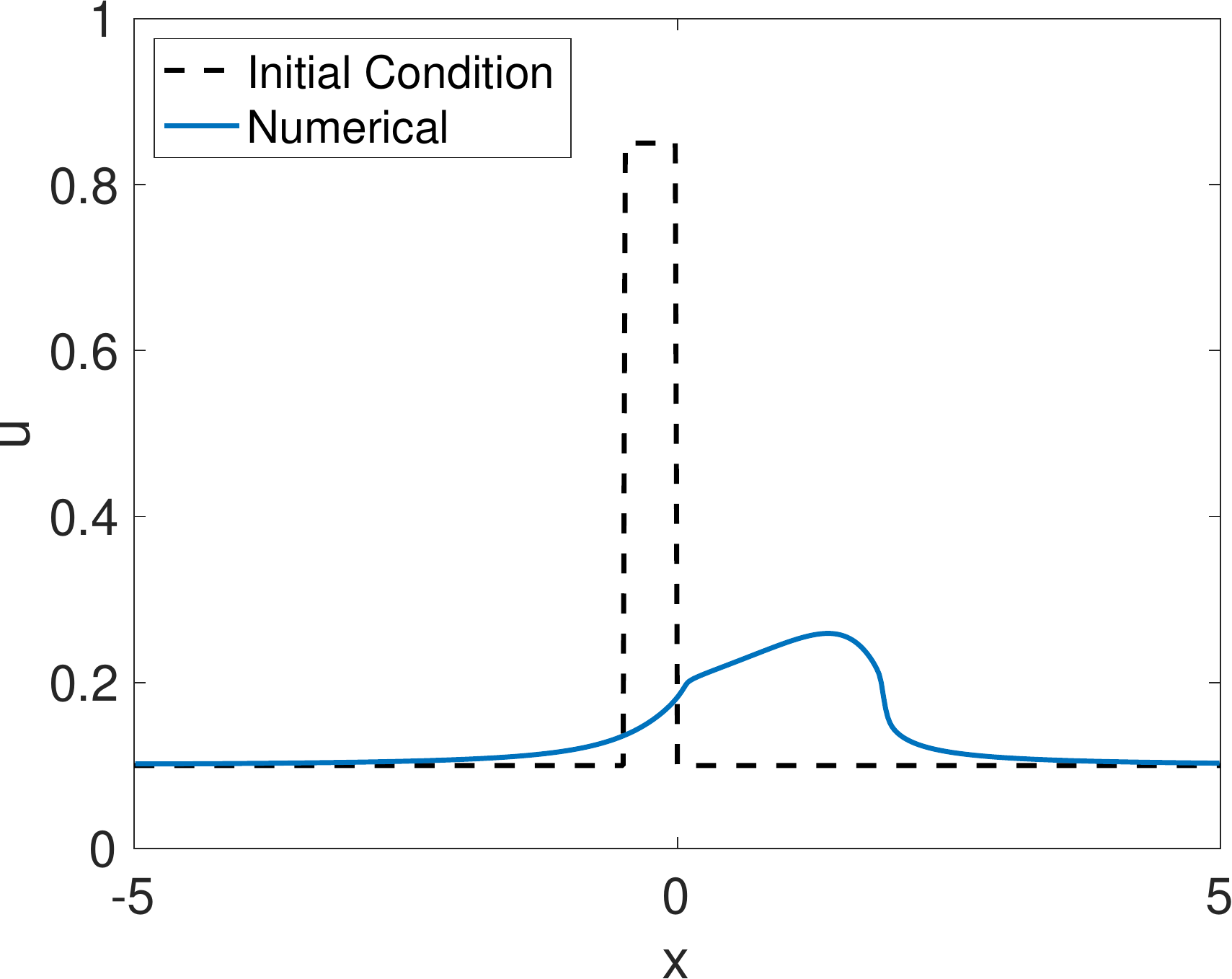}}\\
\subcaptionbox{$\lambda = 1.0$}{\includegraphics[width=0.32\textwidth]{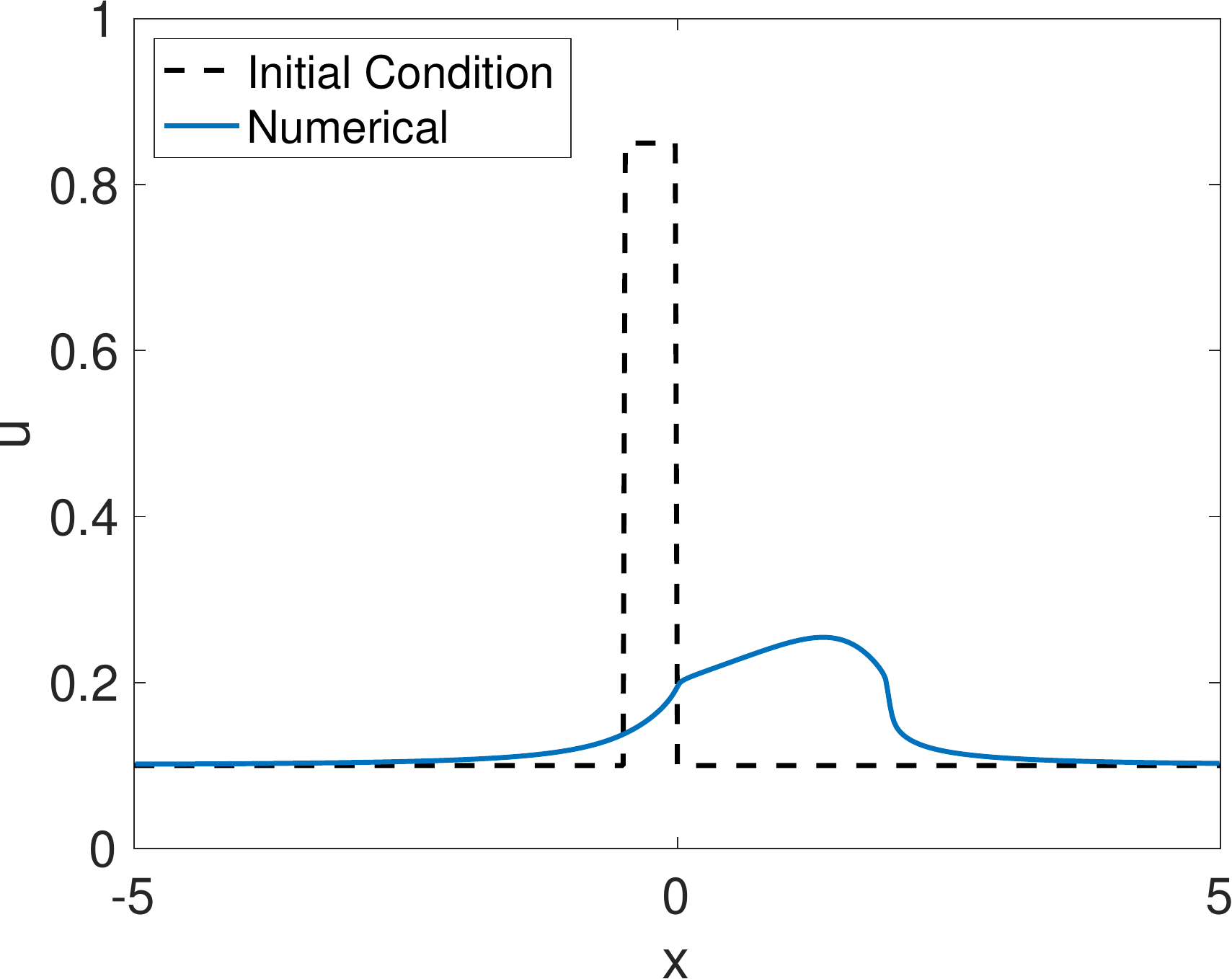}}
\subcaptionbox{$\lambda = 1.5$}{\includegraphics[width=0.32\textwidth]{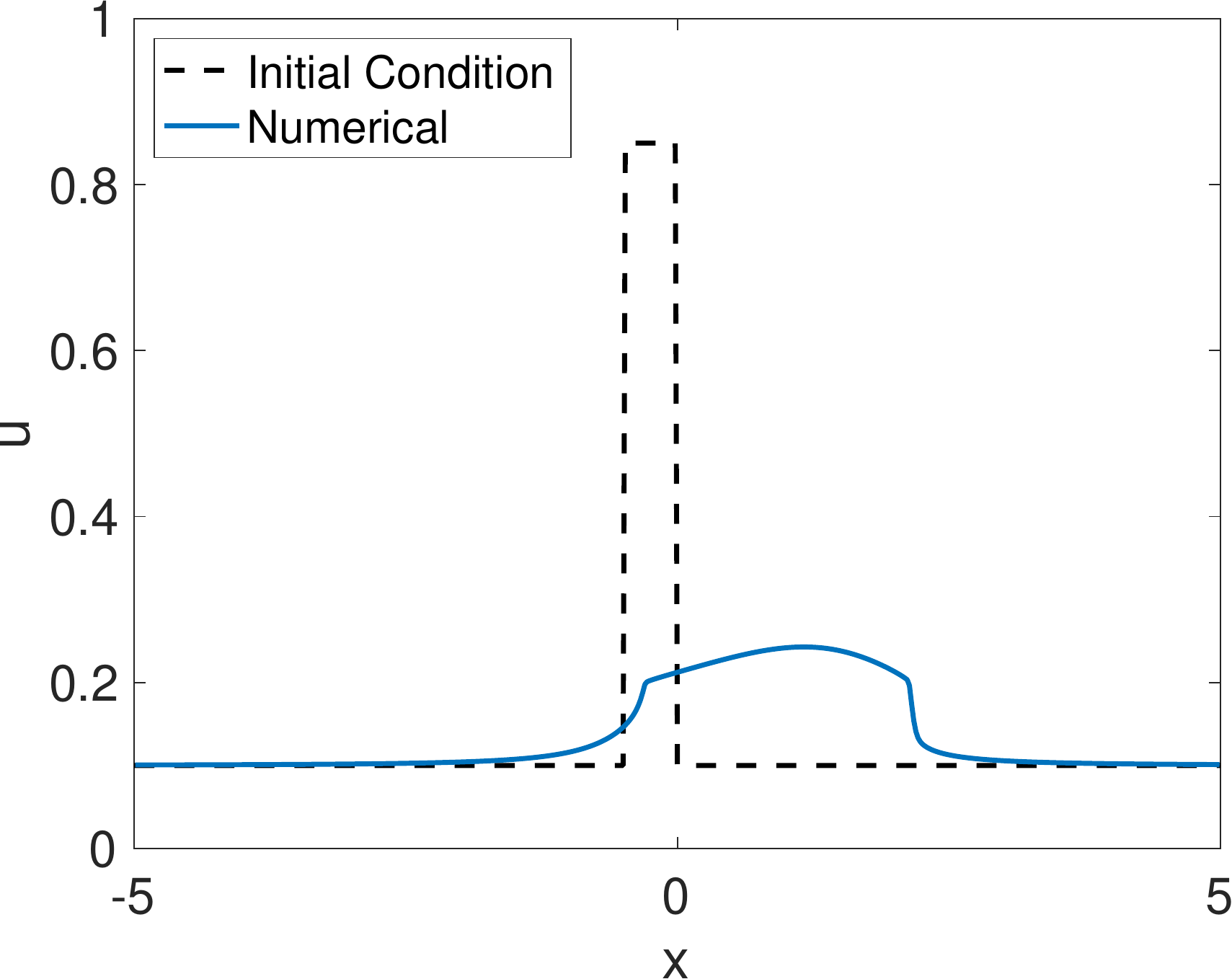}}
\subcaptionbox{$\lambda = 1.9$}{\includegraphics[width=0.32\textwidth]{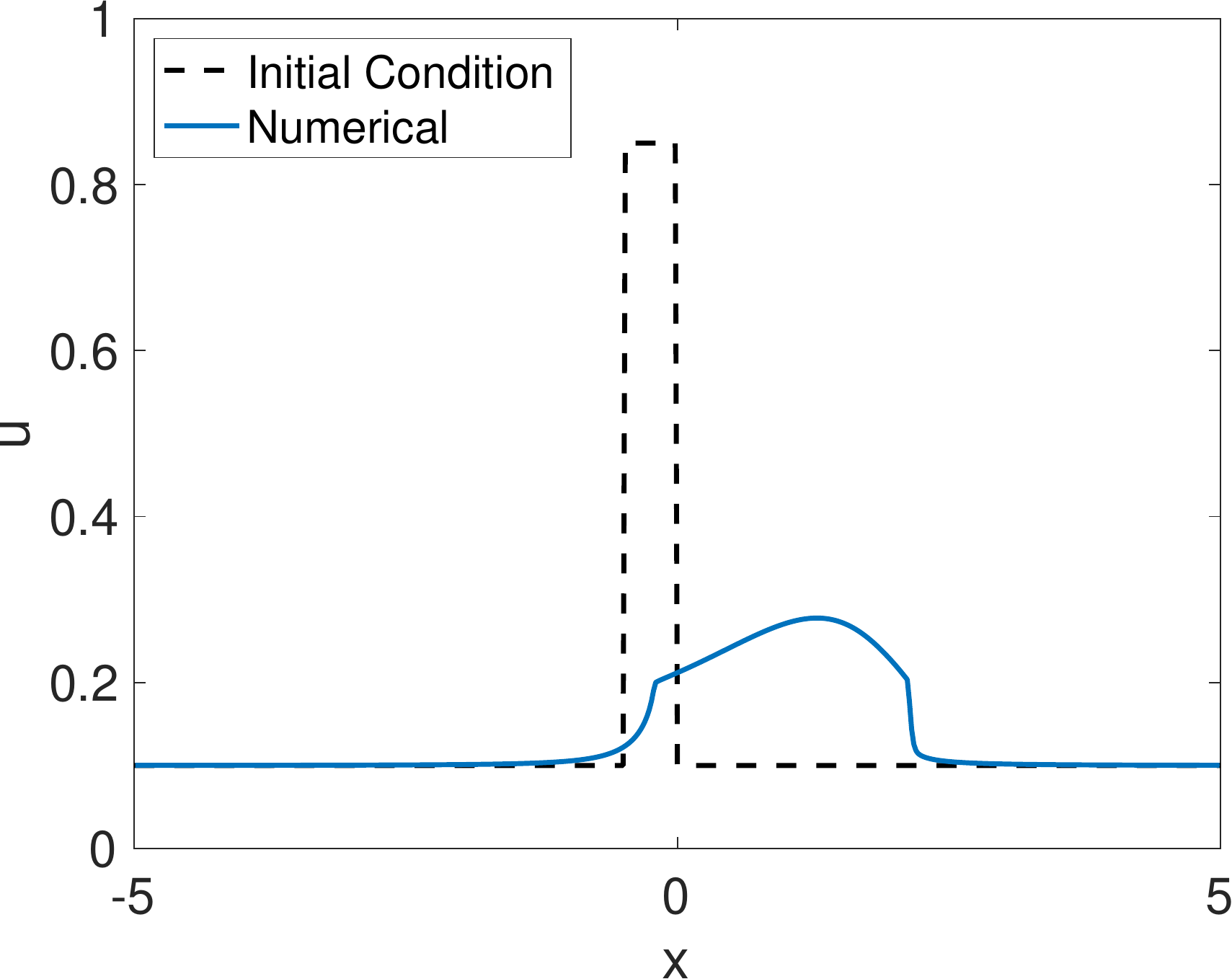}}
\caption{Buckley-Leverett problem evaluated at $T=1$ with scheme \eqref{eqn:fdm_explicit}. The solution is evaluated on a mesh with 501 cells, and parameters $c = 0.0$, $\mu = 0.5$ and $\alpha = 0.2$.}
\label{fig:lambda_compare}
\end{center}
\end{figure}

\begin{figure}[htbp]
\begin{center}
\subcaptionbox{$\lambda = 0.5$}{\includegraphics[width=0.32\textwidth]{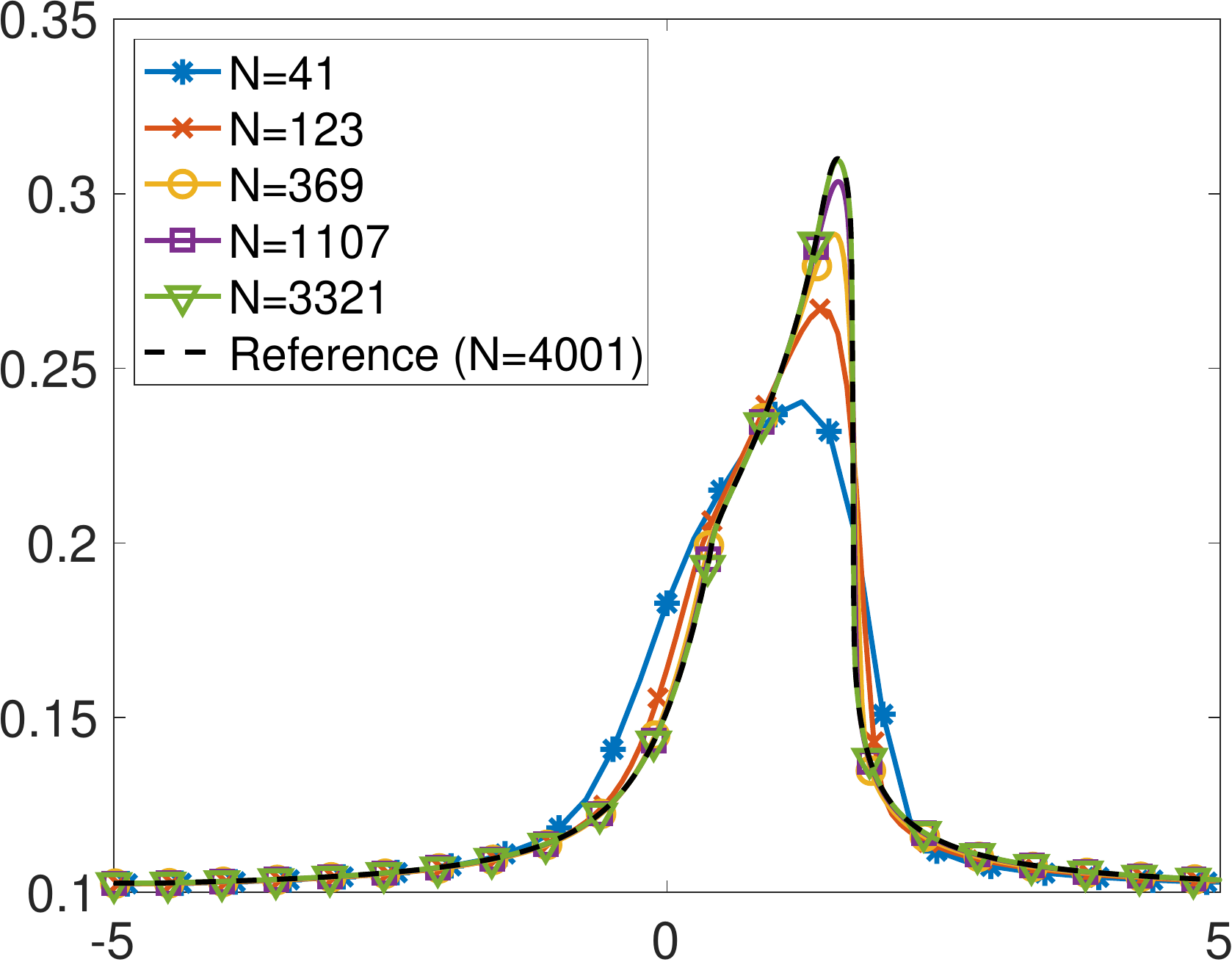}}
\subcaptionbox{$\lambda = 0.75$}{\includegraphics[width=0.32\textwidth]{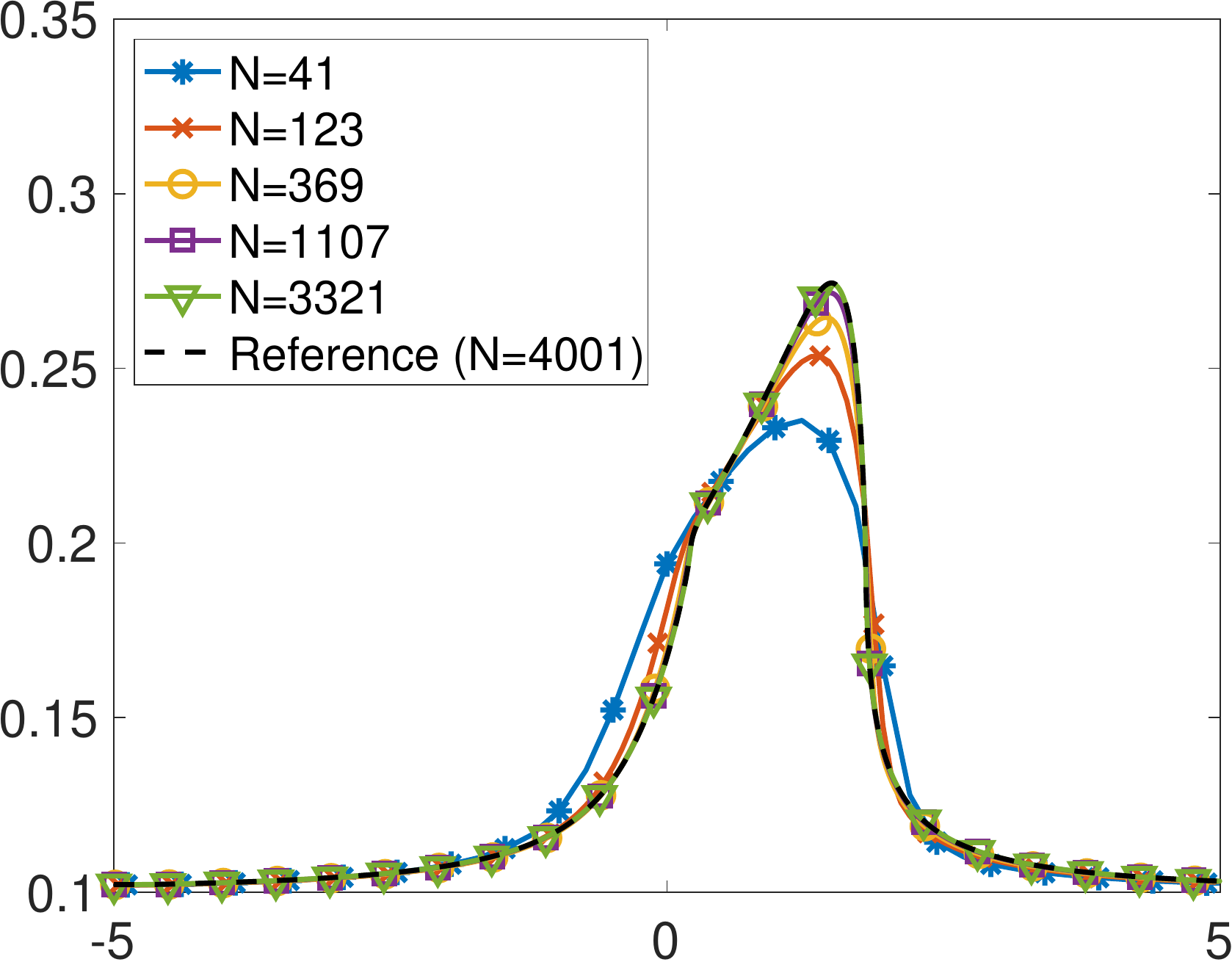}}
\subcaptionbox{$\lambda = 1.5$}{\includegraphics[width=0.32\textwidth]{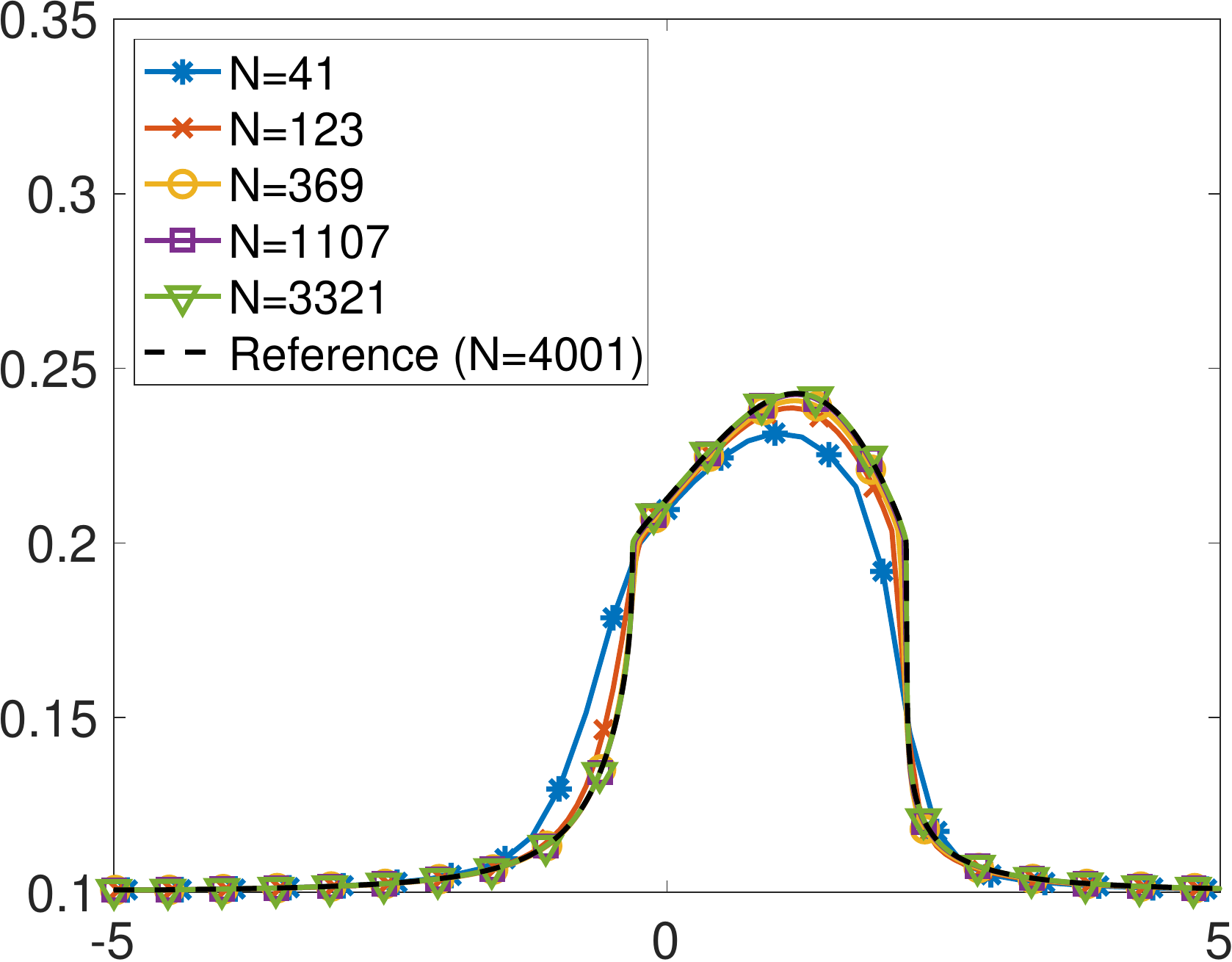}}\\
\subcaptionbox{$\lambda = 0.5$ (zoomed)}{\includegraphics[width=0.32\textwidth]{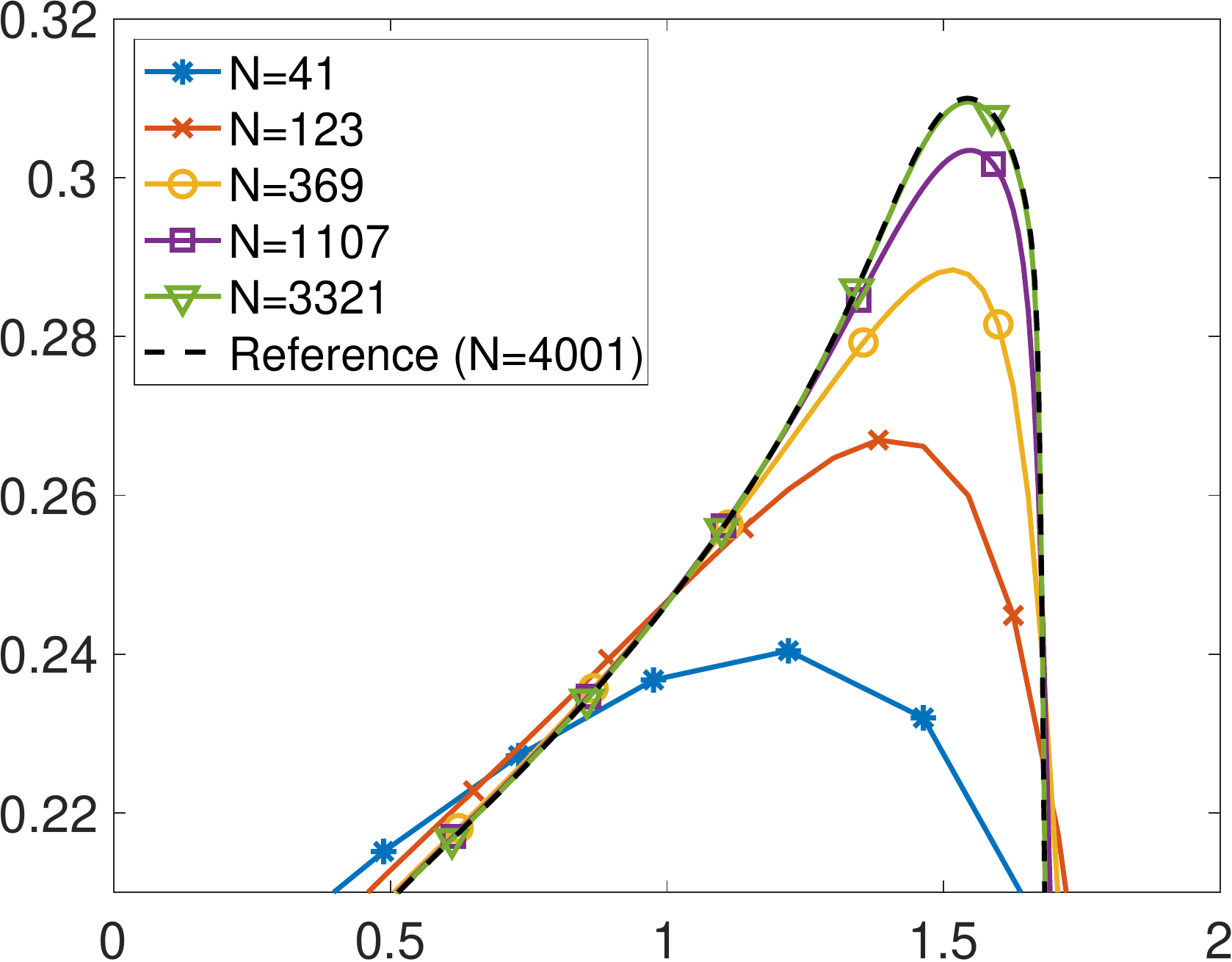}}
\subcaptionbox{$\lambda = 0.75$ (zoomed)}{\includegraphics[width=0.32\textwidth]{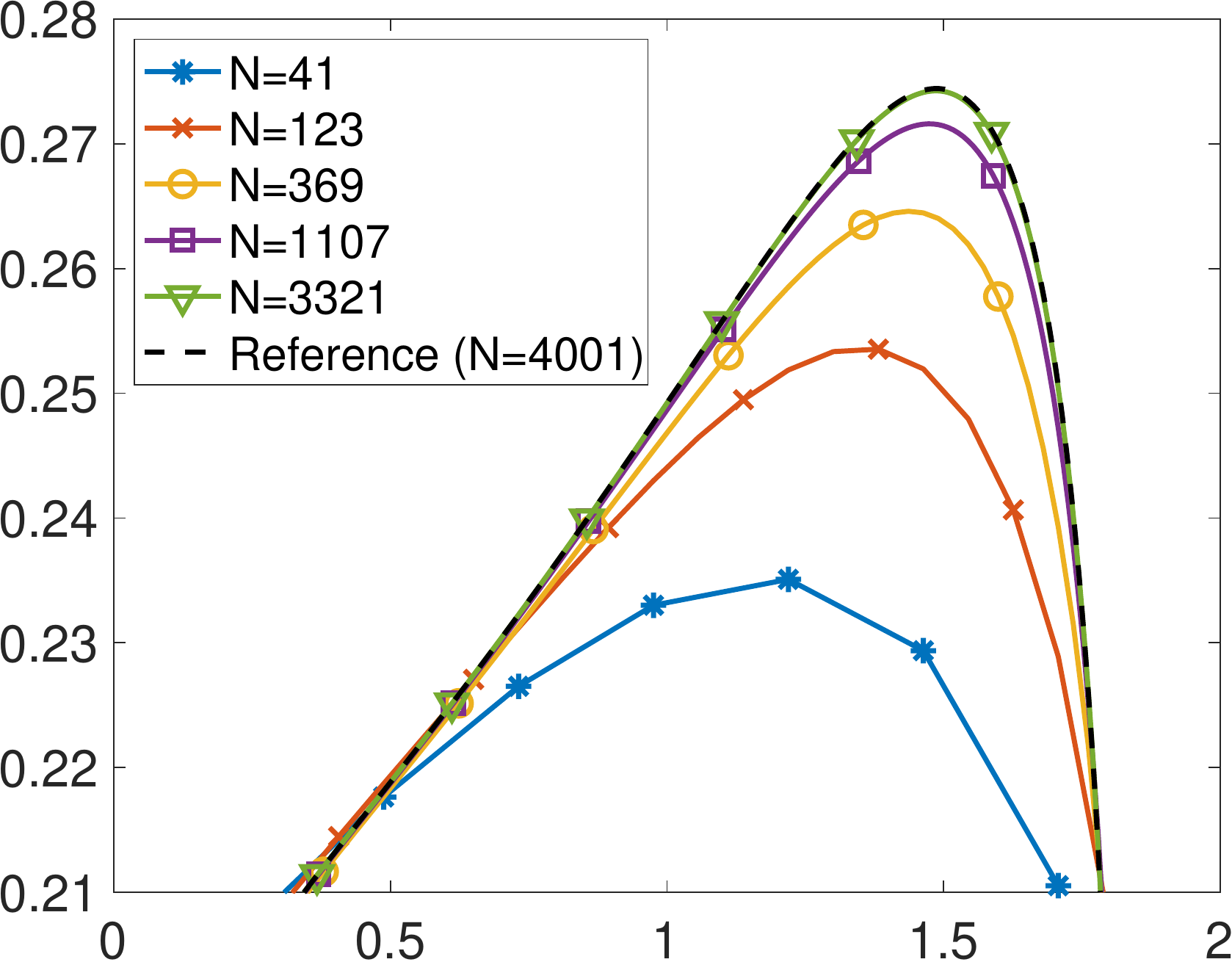}}
\subcaptionbox{$\lambda = 1.5$ (zoomed)}{\includegraphics[width=0.32\textwidth]{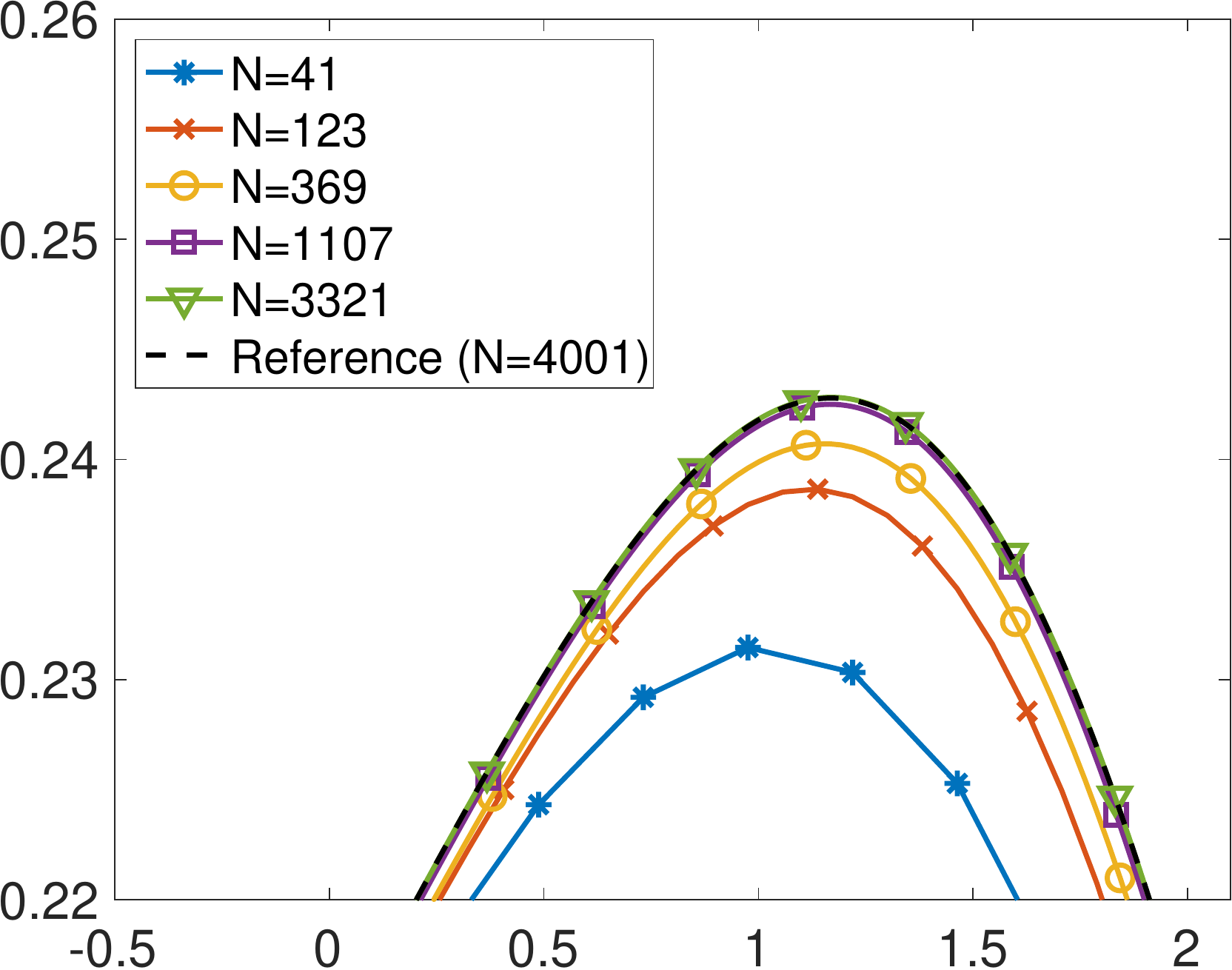}}
\caption{Buckley-Leverett problem evaluated at $T=1$ with scheme \eqref{eqn:fdm_explicit}. The solution is evaluated with parameters $c = 0.0$, $\mu = 0.5$ and $\alpha = 0.2$.}
\label{fig:mesh_ref}
\end{center}
\end{figure}

\subsection{MLMC simulations}\label{sec:num_MLMC}
We now demonstrate the performance of the MLMC-FDM algorithms. We introduce uncertainty in the initial condition, the flux, and the dissipation term by choosing $c \sim \mathcal{U}(0,0.1)$, $\mu \sim  \mathcal{U}(0.3,0.7)$ and $\alpha \sim  \mathcal{U}(0,0.4)$, respectively. Note that, a similar argument as stated in the proof of Theorem~\ref{random_entropy} reveals that the random numerical solution is measurable since the solution map is a composition of a measurable and a continuous map. The number of samples for each mesh level is chosen using \eqref{sample_no_expl}-\eqref{sample_no_expl_0} for the explicit scheme and \eqref{sample_no_impl}-\eqref{sample_no_impl_0} for the explicit-implicit scheme, with the error tolerance set to $\varepsilon_{er} = 2\Delta x_L^{2\Theta}$. Figure \ref{fig:exp_scheme_MLMC} shows the statistical quantities evaluated with the explicit MLMC-FDM algoruthm for $\lambda = 0.5$, $0.75$ and $1.5$, with 41 cells in the coarsest mesh and $L=4$. The solid line represents the estimated mean, while the dashed lines represent the estimated mean $\pm$ the standard deviation. The shaded region between the two dashed lines is referred to as the deviation band. As was seen in the deterministic experiments, the solution tends to be more diffused as the exponent $\lambda$ is increased. This explains why the deviation band in Figure \ref{fig:exp_scheme_MLMC} broadens with increasing $\lambda$. 

\begin{figure}[!htbp]
\begin{center}
\subcaptionbox{$\lambda = 0.5$}{\includegraphics[width=0.33\textwidth]{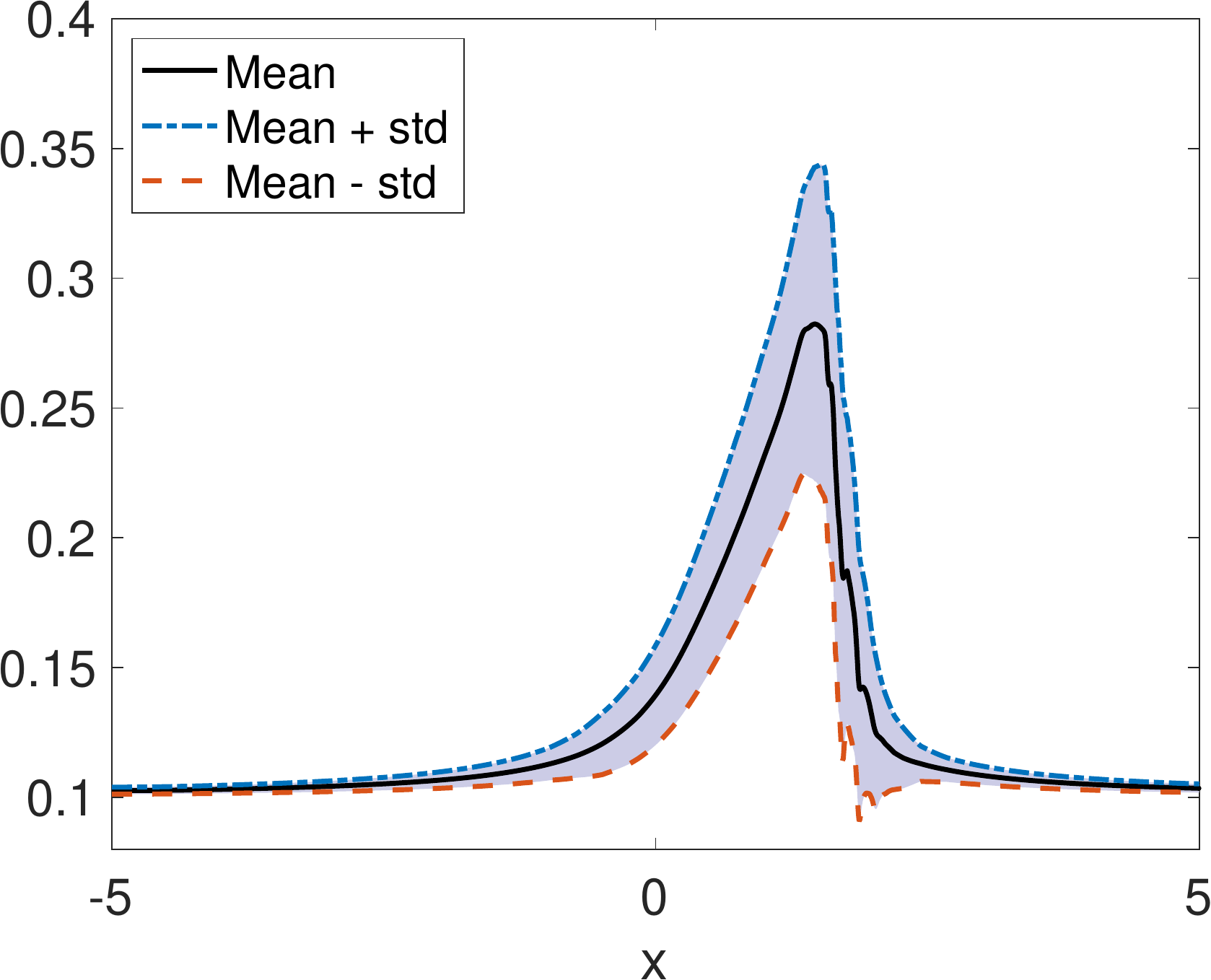}} \hfill
\subcaptionbox{$\lambda = 0.75$}{\includegraphics[width=0.33\textwidth]{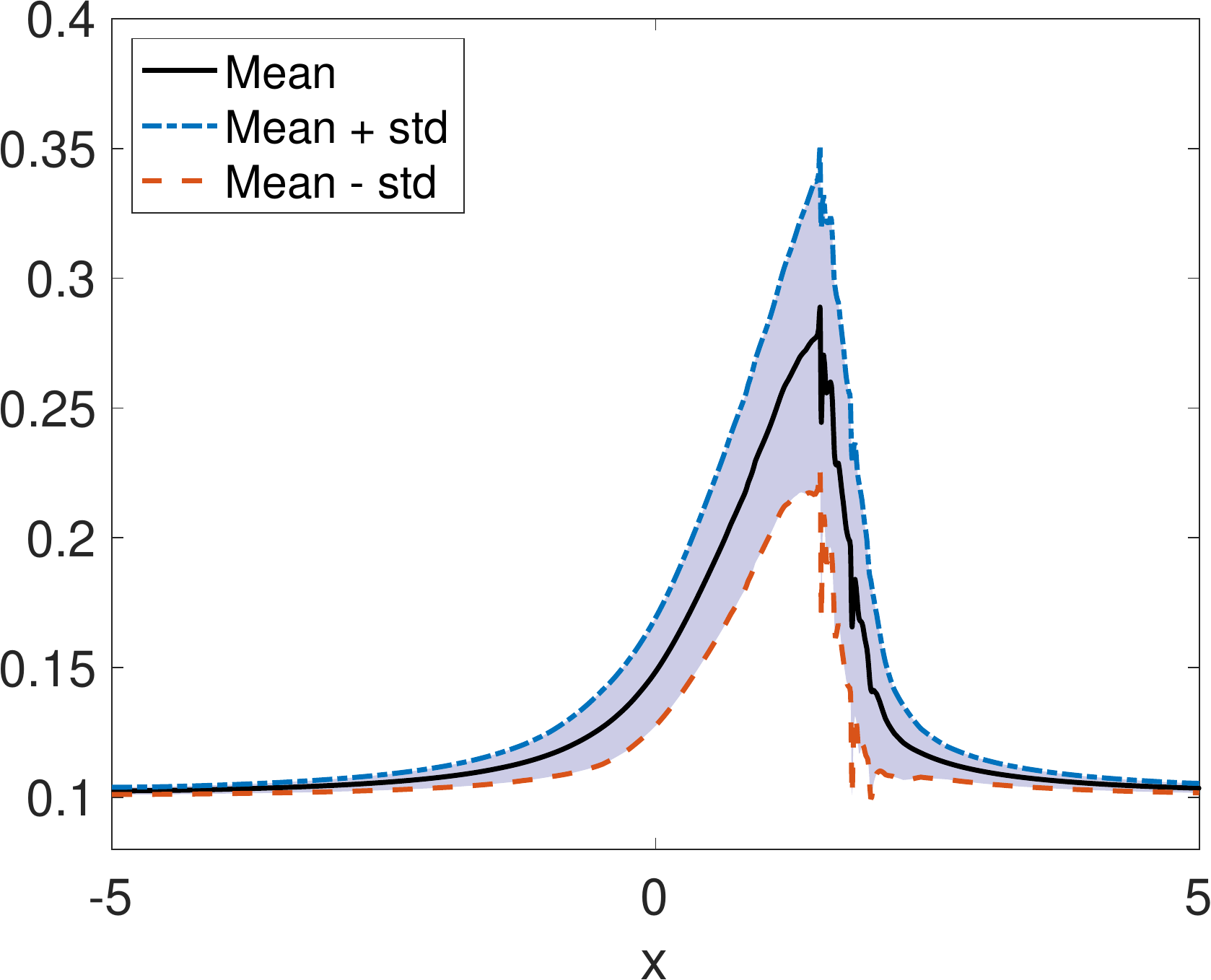}}\hfill
\subcaptionbox{$\lambda = 1.5$}{\includegraphics[width=0.33\textwidth]{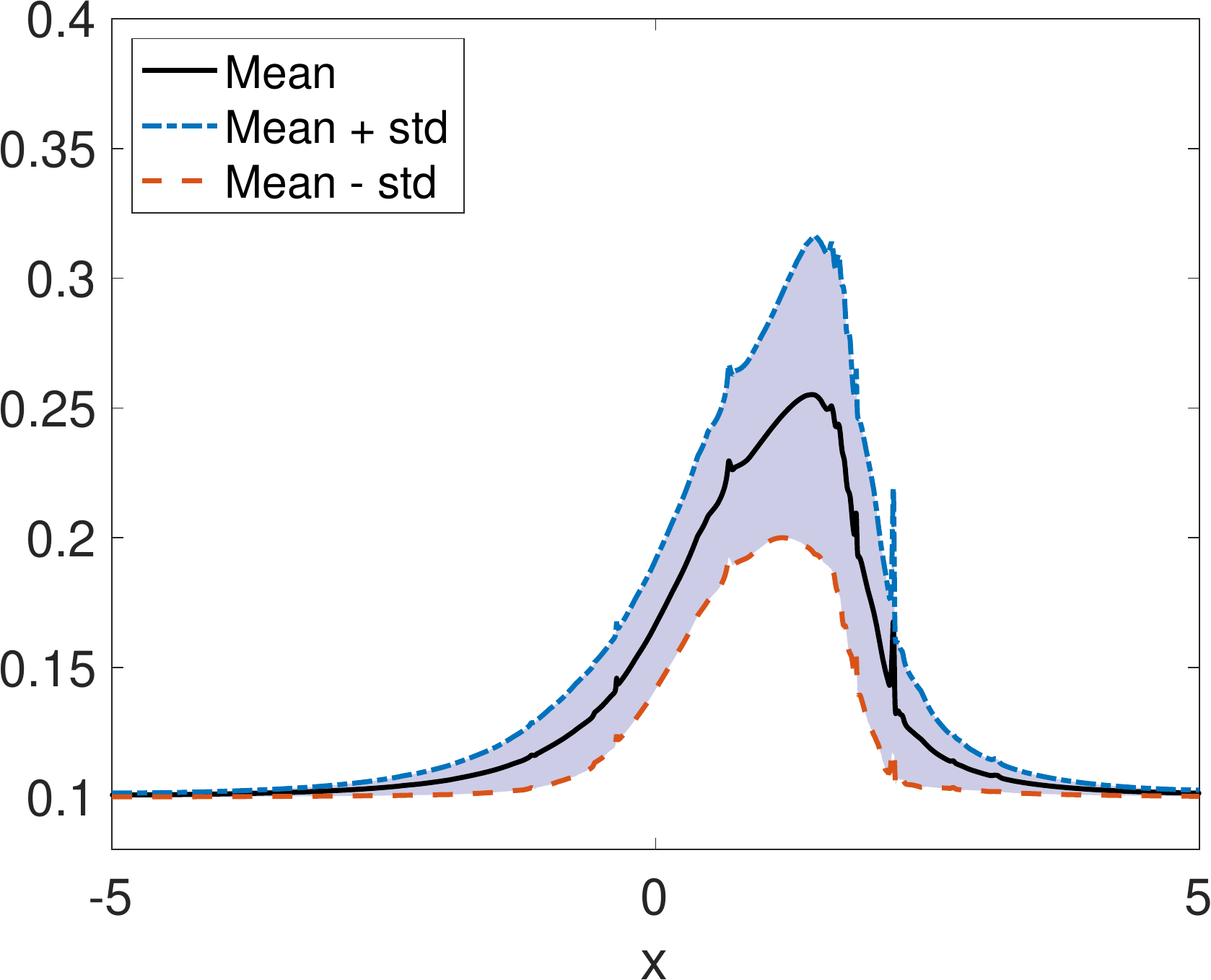}}
\caption{Buckley-Leverett problem evaluated at $T=1$ with explicit MLMC-FDM. The mean estimator $E^4[u(T,.)]$ evaluated with $L=4$ levels, with the dashed lines representing $E^4[u(T,.)] \ \pm$ standard deviation.}
\label{fig:exp_scheme_MLMC}
\end{center}
\end{figure}

Tables \ref{tab:0p5_ex}-\ref{tab:1p5_ex} show the estimated $\mathcal{RMS}$ errors evaluated using \eqref{eqn:rms} with the explicit scheme, as a function of the number of MLMC level $L$, with the coarsest mesh having $N^x_0 = 41$ cells and the finest mesh having $N^x_L$ cells.  We compute the $\mathcal{RMS}$ decay rate $r_1>0$ with respect to the finest mesh size $\Delta x_L$. In view of Remark \ref{rem:error} and the prescription of the error tolerance $\varepsilon_{er} = 2\Delta x_L^{2\Theta}$ to determine the samples in each level, the theoretical estimate of this rate is $r_1 = \Theta$. Based on the estimate \eqref{eqn:MLMC_err_exp}, we also compute and compare the $\mathcal{RMS}$ decay rate $r_2>0$ with respect to the work done, i.e., $\mathcal{RMS} \sim$ $(\text{work})^{-r_2}$. The work done is estimated in term of the total CPU run time (in seconds) for the $Q=30$ revaluations. We observe that the rates are better than those predicted by theory, for all three values of $\lambda$ considered in the experiments. Tables \ref{tab:0p5_im}-\ref{tab:1p5_im} show the estimated $\mathcal{RMS}$ errors with the explicit-implicit scheme. We have limited the MLMC experiments with the explicit-implicit scheme to $L=3$, as the cost of generating each deterministic sample is very high. Based on \eqref{eqn:MLMC_err_imp}, $r_2$ is computed under the assumption that $\mathcal{RMS} \sim$ $(\text{work}/\log(\text{work}))^{-r_2}$.  As was observed with the explicit scheme, the rates are much better than the theoretical ones. Furthermore, we note that the number of samples required and the run times are significantly larger for a given $L>1$, as compared to the MLMC simulations using the explicit scheme.

\begin{table}[!htbp]
\centering
\begin{tabular}{|c|l|l|l|l|l|l|}
\hline
\textbf{L}            & \textbf{1} & \textbf{2} & \textbf{3} & \textbf{4} & \textbf{$r$} & \textbf{expected $r$}\\ \hline
\textbf{\begin{tabular}[c]{@{}c@{}}Number \\of \\Samples\end{tabular}}      &     \begin{tabular}[c]{@{}l@{}}$M_0 = 14$\\ $M_1=2$\end{tabular}       &    \begin{tabular}[c]{@{}l@{}}$M_0 = 90$\\ $M_1=10$ \\ $M_2=2$\end{tabular} &       \begin{tabular}[c]{@{}l@{}}$M_0 = 611$\\ $M_1=63$ \\ $M_2=10$ \\ $M_3 = 2$\end{tabular}     &  \begin{tabular}[c]{@{}l@{}}$M_0 = 4169$\\ $M_1=429$ \\ $M_2=63$ \\ $M_3 = 10$ \\ $M_4 = 2$\end{tabular} &     &      \\ \hline
$\mathbf{N^x_L}$ &  123   & 369   & 1107 & 3321 & 0.484 & 0.25\\ \hline
$\mathbf{\mathcal{RMS}}$          &       \num{8.948e-2}     &  \num{4.967e-2}          &   \num{2.976e-2}         &      \num{1.806e-2} &       &     \\ \hline
\textbf{Run time(s)}  &   \num{2.524e-1}      &   \num{3.550e+0}        &   \num{6.051e+1}      &  \num{1.298e+3} & 0.186 &  0.083  \\ \hline
\end{tabular}
\caption{$\mathcal{RMS}$ vs. $L$ for $\lambda=0.5$, with the explicit scheme. The coarsest mesh has $N^x_0 = 41$ cells, while the finest mesh has $N^x_L$ cells for MLMC algorithm with $L$ levels.}
\label{tab:0p5_ex}
\end{table}

\begin{table}[!htbp]
\centering
\begin{tabular}{|c|l|l|l|l|l|l|}
\hline
\textbf{L}            & \textbf{1} & \textbf{2} & \textbf{3} & \textbf{4} & \textbf{$r$} & \textbf{expected $r$}\\ \hline
\textbf{\begin{tabular}[c]{@{}c@{}}Number \\of \\Samples\end{tabular}}      &     \begin{tabular}[c]{@{}l@{}}$M_0 = 13$\\ $M_1=2$\end{tabular}       &    \begin{tabular}[c]{@{}l@{}}$M_0 = 82$\\ $M_1=9$ \\ $M_2=2$\end{tabular} &       \begin{tabular}[c]{@{}l@{}}$M_0 = 544$\\ $M_1=60$ \\ $M_2=9$ \\ $M_3 = 2$\end{tabular}     &  \begin{tabular}[c]{@{}l@{}}$M_0 = 3623$\\ $M_1=395$ \\ $M_2=60$ \\ $M_3 = 9$ \\ $M_4 = 2$\end{tabular} &     &      \\ \hline
$\mathbf{N^x_L}$ &  123   & 369   & 1107 & 3321 & 0.267 & 0.227\\ \hline
$\mathbf{\mathcal{RMS}}$          &       \num{1.022e-1}     &  \num{6.925e-2}          &   \num{4.846e-2}         &      \num{4.327e-2} &       &    \\ \hline
\textbf{Run time(s)}  &    \num{2.459e-1}     &  \num{3.322e+0}         &   \num{5.842e+1}      & \num{1.262e+3}  & 0.102 &  0.076 \\ \hline
\end{tabular}
\caption{$\mathcal{RMS}$ vs. $L$ for $\lambda=0.75$, with the explicit scheme. The coarsest mesh has $N^x_0 = 41$ cells, while the finest mesh has $N^x_L$ cells for MLMC algorithm with $L$ levels.}
\label{tab:0p75_ex}
\end{table}

\begin{table}[!htbp]
\centering
\begin{tabular}{|c|l|l|l|l|l|l|}
\hline
\textbf{L}            & \textbf{1} & \textbf{2} & \textbf{3} & \textbf{4} & \textbf{$r$} & \textbf{expected $r$}\\ \hline
\textbf{\begin{tabular}[c]{@{}c@{}}Number \\of \\Samples\end{tabular}}      &     \begin{tabular}[c]{@{}l@{}}$M_0 = 10$\\ $M_1=2$\end{tabular}       &    \begin{tabular}[c]{@{}l@{}}$M_0 = 72$\\ $M_1=9$ \\ $M_2=2$\end{tabular} &       \begin{tabular}[c]{@{}l@{}}$M_0 = 532$\\ $M_1=65$ \\ $M_2=9$ \\ $M_3 = 2$\end{tabular}     &  \begin{tabular}[c]{@{}l@{}}$M_0 = 3933$\\ $M_1=481$ \\ $M_2=65$ \\ $M_3 = 9$ \\ $M_4 = 2$\end{tabular} &     &      \\ \hline
$\mathbf{N^x_L}$ &  123   & 369   & 1107 & 3321 &0.534  & 0.071 \\ \hline
$\mathbf{\mathcal{RMS}}$          &     \num{8.996e-2}       &     \num{5.430e-2}       &    \num{2.990e-2}       &   \num{1.574e-2}    &       &    \\ \hline
\textbf{Run time(s)}  &    \num{5.895e-1}     &    \num{1.508e+1}       &    \num{4.901e+2}     &  \num{1.955e+4}  & 0.169 &  0.020 \\ \hline
\end{tabular}
\caption{$\mathcal{RMS}$ vs. $L$ for $\lambda=1.5$, with the explicit scheme. The coarsest mesh has $N^x_0 = 41$ cells, while the finest mesh has $N^x_L$ cells for MLMC algorithm with $L$ levels.}
\label{tab:1p5_ex}
\end{table}

\begin{table}[!htbp]
\centering
\begin{tabular}{|c|l|l|l|l|l|}
\hline
\textbf{L}            & \textbf{1} & \textbf{2} & \textbf{3} & \textbf{$r$} & \textbf{expected $r$}\\ \hline
\textbf{\begin{tabular}[c]{@{}c@{}}Number \\of \\Samples\end{tabular}}      &     \begin{tabular}[c]{@{}l@{}}$M_0 = 26$\\ $M_1=2$\end{tabular}       &    \begin{tabular}[c]{@{}l@{}}$M_0 = 365$\\ $M_1=17$ \\ $M_2=2$\end{tabular} &       \begin{tabular}[c]{@{}l@{}}$M_0 = 4950$\\ $M_1=221$ \\ $M_2=16$ \\ $M_3 = 2$\end{tabular}    &     &      \\ \hline
$\mathbf{N^x_L}$ &  123   & 369   & 1107 & 0.606 & 0.25 \\ \hline
$\mathbf{\mathcal{RMS}}$          &   \num{8.958e-2}    &   \num{4.443e-2}             &  \num{2.364e-2}  &       &    \\ \hline
\textbf{Run time(s)}  & \num{1.939e+1}  &  \num{9.401e+2}  & \num{7.382e+4}  &  0.161 &  0.063 \\ \hline
\end{tabular}
\caption{$\mathcal{RMS}$ vs. $L$ for $\lambda=0.5$, with the explicit-implicit scheme. The coarsest mesh has $N^x_0 = 41$ cells, while the finest mesh has $N^x_L$ cells for MLMC algorithm with $L$ levels.}
\label{tab:0p5_im}
\end{table}

\begin{table}[!htbp]
\centering
\begin{tabular}{|c|l|l|l|l|l|}
\hline
\textbf{L}            & \textbf{1} & \textbf{2} & \textbf{3} & \textbf{$r$} & \textbf{expected $r$}\\ \hline
\textbf{\begin{tabular}[c]{@{}c@{}}Number \\of \\Samples\end{tabular}}      &     \begin{tabular}[c]{@{}l@{}}$M_0 = 26$\\ $M_1=2$\end{tabular}       &    \begin{tabular}[c]{@{}l@{}}$M_0 = 365$\\ $M_1=17$ \\ $M_2=2$\end{tabular} &       \begin{tabular}[c]{@{}l@{}}$M_0 = 4950$\\ $M_1=221$ \\ $M_2=16$ \\ $M_3 = 2$\end{tabular}    &     &      \\ \hline
$\mathbf{N^x_L}$ &  123   & 369   & 1107 & 0.343 & 0.25\\ \hline
$\mathbf{\mathcal{RMS}}$          &   \num{9.559e-2}    &    \num{6.037e-2}            &  \num{4.496e-2}   &       &     \\ \hline
\textbf{Run time(s)}  & \num{2.121e+1}  &  \num{1.117e+3}  &  \num{7.591e+4} & 0.092 & 0.063  \\ \hline
\end{tabular}
\caption{$\mathcal{RMS}$ vs. $L$ for $\lambda=0.75$, with the explicit-implicit scheme. The coarsest mesh has $N^x_0 = 41$ cells, while the finest mesh has $N^x_L$ cells for MLMC algorithm with $L$ levels.}
\label{tab:0p75_im}
\end{table}

\begin{table}[!htbp]
\centering
\begin{tabular}{|c|l|l|l|l|l|}
\hline
\textbf{L}            & \textbf{1} & \textbf{2} & \textbf{3} & \textbf{$r$} & \textbf{expected $r$}\\ \hline
\textbf{\begin{tabular}[c]{@{}c@{}}Number \\of \\Samples\end{tabular}}      &     \begin{tabular}[c]{@{}l@{}}$M_0 = 24$\\ $M_1=2$\end{tabular}       &    \begin{tabular}[c]{@{}l@{}}$M_0 = 384$\\ $M_1=18$ \\ $M_2=2$\end{tabular} &       \begin{tabular}[c]{@{}l@{}}$M_0 = 5971$\\ $M_1=277$ \\ $M_2=17$ \\ $M_3 = 2$\end{tabular}    &     &      \\ \hline
$\mathbf{N^x_L}$ &  123   & 369   & 1107 & 0.538 & 0.125 \\ \hline
$\mathbf{\mathcal{RMS}}$          &   \num{7.994e-2}    &     \num{4.599e-2}           &  \num{2.452e-2}   &       &     \\ \hline
\textbf{Run time(s)}  &  \num{7.059e1}  &  \num{5.612e3}  & \num{4.890e5}  &  0.133 &  0.028 \\ \hline
\end{tabular}
\caption{$\mathcal{RMS}$ vs. $L$ for $\lambda=1.5$, with the explicit-implicit scheme. The coarsest mesh has $N^x_0 = 41$ cells, while the finest mesh has $N^x_L$ cells for MLMC algorithm with $L$ levels.}
\label{tab:1p5_im}
\end{table}

Finally, we numerically demonstrate that the MLMC-FDM algorithm is superior to the MC-FDM algorithm. In order to do this, we compute the $\mathcal{RMS}$ error with the explicit MLMC scheme for $L=3$, $\lambda = 0.5$ and $N_0^x \in \{ 31,41,51,61,71,81\}$. For each $N_0^x$, we compute the statistics using the explicit MC scheme on the finest level with $N^x_L = 3^LN_0^x$ cells. We choose the number of samples using \eqref{eqn:MCsamples}. In order to have a reasonable number of samples, we take $C=2$ in \eqref{eqn:MCsamples}. The $\mathcal{RMS}$ error for the MC-FDM algorithm is also computed using \eqref{eqn:rms} with $Q=30$, where $Z_k$ is the computed estimated mean \eqref{MC_estimate_defn}. We plot the variation in $\mathcal{RMS}$ error as a function of the mesh size $N^x_L$ in Figure \ref{fig:MC_MLMC}(A). While both algorithms show a similar decay, the MLMC algorithm leads to smaller errors on a given mesh. However, the work done by the MC algorithm to achieve the same level of $\mathcal{RMS}$ error as the MLMC algorithm is significantly higher (almost 100 times), as shown in Figure \ref{fig:MC_MLMC}(B).

\begin{figure}[!htbp]
\begin{center}
\subcaptionbox{$\mathcal{RMS}$ vs $N^x_L$}{\includegraphics[width=0.45\textwidth]{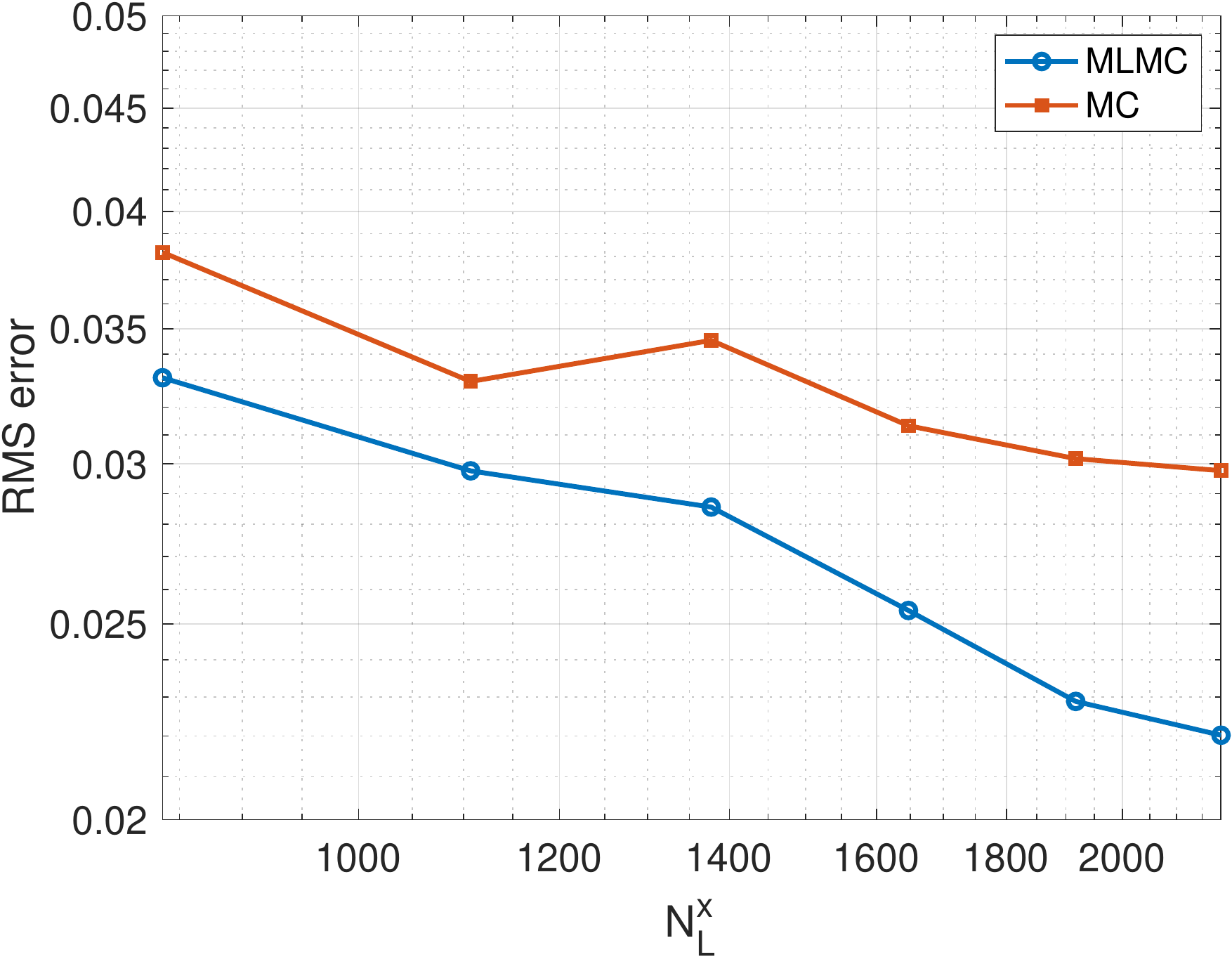}} \hfill
\subcaptionbox{$\mathcal{RMS}$ vs runtime}{\includegraphics[width=0.45\textwidth]{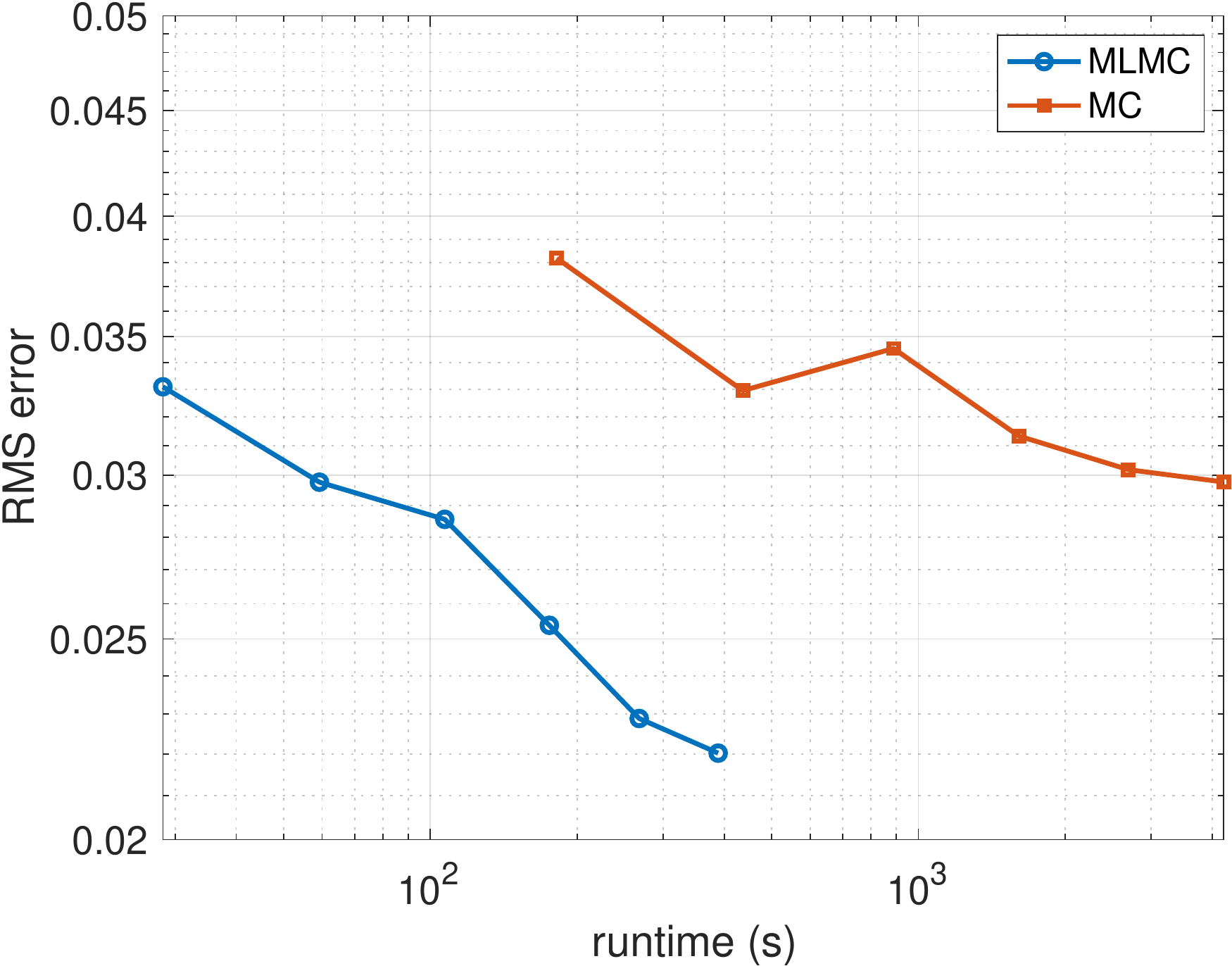}}\hfill
\caption{Comparing the performance of the MC-FDM and MLMC-FDM algorithms. The problem is solved using the explicit scheme for $\lambda=0.5$. The plots are shown in the log-log scale.}
\label{fig:MC_MLMC}
\end{center}
\end{figure}


\section{Conclusion}
The proper notion of random entropy solution for degenerate non-linear non-local conservation laws in several space dimension with uncertain initial data and random fluxes is formulated and its well-posedness is demonstrated. We propose a new class of MLMC methods and prove them to be convergent.
MLMC-FDMs are designed in such a way that it maintains the same accuracy vs. work bounds as of deterministic FDM. We have observed that the obtained rates in MLMC are much improved than the single level MC. Hence, MLMC-FDMs are faster than MC-FDMs at comparable accuracy.
We have presented several numerical experiments with Buckley-Leverett in one space dimension that reinforce the theory. The MLMC-FDM algorithms are implemented for various values of $\lambda$ and 
${\mathcal{RMS}}$ error is calculated. It is observed that the numerical convergence rates are better than the theoretical rates for both explicit and explicit-implicit schemes.


\section*{Acknowledgements}
U.K acknowledges the support of the Department of Atomic Energy,  Government of India, under project no.$12$-R$\&$D-TFR-$5.01$-$0520$, and India SERB Matrics grant MTR/$2017/000002$.


\end{document}